 \documentclass[11pt]{article}

\usepackage{amssymb,amsmath,amsfonts}
\usepackage{graphicx,color,enumitem}
\usepackage[a4paper, total={6.5in, 8.5in}]{geometry}
\usepackage{mathrsfs}
\usepackage{amsthm} 
\usepackage[dvipsnames]{xcolor}
\usepackage{bm}
\usepackage[round]{natbib}
\usepackage[dvipsnames]{xcolor}
\usepackage{tikz} 
\usepackage{subfigure}
\usetikzlibrary{patterns}
\usepackage{float}
\usepackage{bm}
\usepackage{dsfont}
\RequirePackage[colorlinks,citecolor=blue,urlcolor=blue, linkcolor=red]{hyperref}
\usepackage{bbm}
\usepackage{mathtools}
\usepackage{titlesec}
\usepackage{fancyhdr}
\usepackage{graphicx}
\graphicspath{{figures/}}
\usepackage[]{appendix}
\usepackage{xr}


\newcommand{\E}{{\mathbb E}}
\newcommand{\F}{{\mathbb F}}
\newcommand{\G}{{\mathbb G}}
\newcommand{\N}{{\mathbb N}}
\renewcommand{\P}{{\mathbb P}}
\renewcommand{\S}{{\mathbb S}}
\newcommand{\R}{{\mathbb R}}
\newcommand{\Dcal}{{\mathcal D}}
\newcommand{\Ecal}{{\mathcal E}}
\newcommand{\Fcal}{{\mathcal F}}
\newcommand{\Gcal}{{\mathcal G}}
\newcommand{\Kcal}{{\mathcal K}}
\newcommand{\Pcal}{{\mathcal P}}
\newcommand{\Tcal}{{\mathcal T}}
\newcommand{\Wcal}{{\mathcal W}}
\newcommand{\Xcal}{{\mathcal X}}

\newcommand{\Nu}{{N}}

\newcommand{\fdot}{{\,\cdot\,}}
\DeclareMathOperator{\supp}{supp}
\DeclareMathOperator{\tr}{Tr}
\DeclareMathOperator{\gph}{gph}

\newtheorem{theorem}{Theorem}
\newtheorem{corollary}[theorem]{Corollary}
\newtheorem{lemma}[theorem]{Lemma}
\newtheorem{proposition}[theorem]{Proposition}
\theoremstyle{definition}
\newtheorem{definition}[theorem]{Definition}
\newtheorem{remark}[theorem]{Remark}
\newtheorem{example}[theorem]{Example}

\numberwithin{equation}{section}
\numberwithin{theorem}{section}

\renewenvironment{example}
  {\pushQED{\qed}\examplex}
  {\popQED\endexamplex}

\begin{document}

\title{Existence of probability measure valued jump-diffusions in generalized Wasserstein spaces} 

\author{Martin~Larsson\thanks{Department of Mathematical Sciences, Carnegie Mellon University, Pittsburgh, Pennsylvania 15213, USA, martinl@andrew.cmu.edu.}
\and Sara~Svaluto-Ferro\thanks{Faculty of Mathematics, University of Vienna, Kolingasse 14-16, 1090 Vienna, Austria, sara.svaluto-ferro@uni\-vie.ac.at.\newline
The authors gratefully acknowledge financial
support by the Swiss National Science Foundation (SNF) under grant 
205121$\_$163425. They also thank two anonymous referees for their valuable comments. Sara Svaluto-Ferro gratefully acknowledges financial 
support by the Vienna Science and Technology Fund (WWTF) under grant 
MA16-021.
}}
\date{December 2, 2020}

\maketitle

\abstract{We study existence of probability measure valued jump-diffusions described by martingale problems. We develop a simple device that allows us to embed Wasserstein spaces and other similar spaces of probability measures into locally compact spaces where classical existence theory for martingale problems can be applied. The method allows for general dynamics including drift, diffusion, and possibly infinite-activity jumps. We also develop tools for verifying the required conditions on the generator, including the positive maximum principle and certain continuity and growth conditions. To illustrate the abstract results, we consider large particle systems with mean-field interaction and common noise.}

\

\noindent\textbf{Keywords:} 
probability measure valued processes, martingale problem, Wasserstein spaces, positive maximum principle, McKean--Vlasov equations\\
\noindent \textbf{MSC (2020) Classification:} 60J60, 60J75, 60G57

\tableofcontents

\section{Introduction}

In this paper we study existence of probability measure valued jump-diffusions, whose dynamics is specified by means of a martingale problem. Processes taking values in spaces of probability measures play an important role in a number of applied contexts. This includes population genetics (see \cite{E:11} for an overview), stochastic partial differential equations (see e.g.~\cite{FL:92} and \cite{KX:99} among many others), statistical physics (see \cite{HS:87} for an overview), optimal transport (see \cite{V:08} for an overview), and mathematical finance, in particular stochastic optimal control, McKean--Vlasov equations, and mean field games (see e.g.~\cite{CD:17} and the references given there) and stochastic portfolio theory (see e.g.~\cite{F:02}, \cite{FK:09}, and \cite{C:19}).

The mathematical theory of probability measure valued processes has a long history going back to \citet{W:68}, \citet{D:77,D:78}, and \citet{FV:79}. We refrain from a full literature review, but only mention the remarkable collection of St.~Flour lecture notes of \citet{S:91}, \citet{D:93}, and \citet{P:02}, as well as the work of \citet{EK:87,EK:93, EK:05}.

Much of the classical literature on measure valued processes works with the weak topology on the space $M_1(\R^d)$ of all probability measures on $\R^d$ (or some other relevant underlying spaces). There are however other interesting topologies that one can place on spaces of probability measures, that are more appropriate in certain situations. Prominent examples are topologies induced by Wasserstein metrics on the spaces $\Pcal_p(\R^d)$ of probability measures with finite $p$-th moments. A basic reason for considering such stronger topologies is to ensure that the quantities which are naturally associated to the current state of the model are continuous functions on the state space.

The price to pay is that the classical existence theory for martingale problems becomes more difficult to apply. As a result, most proofs of existence of measure valued processes proceed instead via interacting particle systems and a passage to the large-population limit (see for instance the approach presented by \cite{DV:95}). In this paper we prove existence for the limiting system directly, without passing through particle systems.

A key difficulty in using the martingale problem is related to the fact that (say) the Wasserstein space $\Pcal_p(\R^d)$ is not straightforward to compactify. To illustrate this, consider first $M_1(\R^d)$ with the topology of weak convergence. This space fails to be locally compact, and hence does not admit a standard one-point compactification. However, the space $M_1((\R^d)^\Delta)$ of probability measures on the one-point compactification of $\R^d$ \emph{is} compact, and thus fits naturally with classical machinery. This simple procedure does not work for $\Pcal_p(\R^d)$.

In this paper we develop a simple device for embedding $\Pcal_p(\R^d)$, and other similar spaces, into compact spaces where the classical existence theory of martingale problems can be applied. This allows us to establish existence of solutions for martingale problems in spaces of this kind. The operators for which the martingale problem is solved can be very general, including both drift, diffusion, and jumps which can be of infinite activity and even non-summable.

We start in Section~\ref{secmgprob} by reviewing some facts about martingale problems. The core of the paper is Section~\ref{main}, where we state and prove our main abstract result, Theorem~\ref{T_Pw}. There we consider a linear operator $L$ on a carefully chosen domain of test functions. A key assumption on $L$ is, as one would expect, that it satisfy the positive maximum principle. Since $L$ acts on functions of probability measures, it may not be obvious how to verify the positive maximum principle in practice. To remedy this, we develop necessary conditions for optimality, see Theorem~\ref{T_optimality}, that can be used to verify the positive maximum principle for operators of \emph{L\'evy type}, introduced in Section~\ref{S_Levy_type}. This extends results in \citet{CLS:18}. Furthermore, in addition to the positive maximum principle, we impose certain continuity and growth conditions on $L$. In Section~\ref{S_verify_tech} we develop tools to aid the verification of these conditions. Finally, in Sections~\ref{three} and~\ref{IIIsec42}, we discuss some applications that illustrate the scope of the abstract theory. These applications are primarily related to large particle systems with mean-field interaction, where the particles are subject to common noise. In such systems, the limiting empirical distribution of the particles evolves as a probability measure valued stochastic process, whose dynamics can often be described in terms of a martingale problem of the type considered here.

The following notation is used throughout the paper. For a locally compact Polish space $E$, we let $M_+(E)$ denote the Polish space of positive measures on $E$, and $M_1(E)$ the subspace of probability measures. We also write $M(E)=M_+(E)-M_+(E)$ for the space of signed measures on $E$ of bounded variation. These spaces are sometimes considered with the topology of weak convergence (defined using bounded continuous functions and denoted $\mu_n\Rightarrow\mu$) or vague convergence (defined using continuous functions vanishing at infinity). We remark that if $E$ is compact, then $M_1(E)$ is compact and $M_+(E)$ is locally compact. However, if $E$ is  noncompact, $M_1(E)$ is not even locally compact. See for instance Remark~13.14(iii) and Corollary~13.30 in \cite{K:13} for more details. For a Polish space $\Xcal$, we let $C(\Xcal)$ denote the space of all continuous functions $f\colon\Xcal\to \R$. Subscripts $0$ and $c$ indicate that the functions are also vanishing at infinity and have compact support, respectively. If present, a superscript indicates their degree of continuous differentiability.

\section{Martingale problems and the positive maximum principle}\label{secmgprob}

Let $\Xcal$ be a Polish space, $\Dcal\subseteq C(\Xcal)$ a linear subspace, and consider a linear operator
\begin{equation}\label{LDtoCX}
L\colon \Dcal \to C(\Xcal).
\end{equation}
In this paper, $\Xcal$ will be a subset of $M(E)$ for some closed subset $E\subseteq\R^d$, or of $M(E^\Delta)$ where $E^\Delta$ is the one-point compactification of $E$. The topology on $\Xcal$ will however not always be the subspace topology (i.e.\ the topology of weak convergence). Moreover, the functions in $\Dcal$ will usually be defined on a larger subset of $M(E)$ than $\Xcal$, in which case the condition $\Dcal\subseteq C(\Xcal)$ just means that $f|_\Xcal\in C(\Xcal)$ for every $f\in \Dcal$.

\begin{definition}
An $\Xcal$-valued c\`adl\`ag process $X$, defined on some filtered probability space, is called a solution to the martingale problem for $(L,\Dcal,\Xcal)$ with initial condition $\mu\in\Xcal$ if $X_0=\mu$ and
\[
f(X_t) - f(X_0) - \int_0^t Lf(X_s) ds, \quad t\ge0,
\]
is a local martingale for every $f\in\Dcal$.
\end{definition}

It is convenient to allow solutions to the martingale problem to leave the state space. If $\Xcal$ is locally compact, this is formalized via a one-point compactification of $\Xcal$. A similar procedure works more generally. Fix a cemetery state $\dag\notin\Xcal$. Define $\Xcal^\dag=\Xcal\cup\{\dag\}$, and let $\Dcal^\dag$ consist of all $f\colon \Xcal^\dag\to\R$ such that $(f-f(\dag))|_{\Xcal} \in \Dcal$. For every $f\in\Dcal^\dag$, define a function $L^\dag f\colon\Xcal^\dag\to\R$ by $L^\dag f|_{\Xcal} = L((f-f(\dag))|_{\Xcal})$ and $L^\dag f(\dag)=0$. Assume that the given Polish topology on $\Xcal$ can be extended to a Polish topology on $\Xcal^\dag$ in such a way that both $\Dcal^\dag$ and $L^\dag(\Dcal^\dag)$ are contained in $C(\Xcal^\dag)$. For example, this is the case if $\Xcal$ is locally compact, $\Xcal^\dag$ is the one-point compactification, and both $\Dcal$ and $L(\Dcal)$ are contained in $C_0(\Xcal)$. Observe that $\dag$ may or may not be an isolated point. If $\Xcal$ is not locally compact, then the one-point compactification is not available, and other constructions must be used. This situation arises, for instance, when $\Xcal$ is a Wasserstein space of probability measures.

\begin{definition}
A solution $X$ to the martingale problem for $(L^\dag,\Dcal^\dag,\Xcal^\dag)$ with initial condition $\mu\in\Xcal$ is called a possibly killed solution to the martingale problem for $(L,\Dcal,\Xcal)$ with initial condition $\mu$.\footnote{In our terminology, a solution can be killed either by jumping to the cemetery state $\dag$, or by reaching it continuously by means of an ``explosion''.}
\end{definition}

For definiteness, we now suppose that $\Xcal$ is a subset of $M(E)$. We also suppose that for $f\in\Dcal$, both $f$ and $Lf$ are defined on all of $M(E)$. The following classical definition is useful because it can very often be checked in practice.

\begin{definition}
$L$ satisfies the positive maximum principle on $\Xcal$ at $\mu\in M(E)$ if
\[
\text{$f\in\Dcal$ and $f(\mu)=\sup_\Xcal f \ge 0$} \quad\Longrightarrow\quad Lf(\mu)\le0.
\]
If this holds for all $\mu\in\Xcal$, then $L$ is said to satisfy the positive maximum principle on $\Xcal$.
\end{definition}

The positive maximum principle directly implies that $Lf|_\Xcal$ only depends on $f|_\Xcal$ and not on the values $f$ takes outside $\Xcal$. Thus, if $L$ satisfies the positive maximum principle on $\Xcal$, it can be regarded as an operator sending functions on $\Xcal$ to functions on $\Xcal$, 
consistent with \eqref{LDtoCX}. The positive maximum principle is linked to existence of solutions to the martingale problem. The following classical result deals with the locally compact case. The nontrivial part is the forward implication, whose proof can be found, e.g., in \cite[Theorem~4.5.4]{EK:05}.

\begin{theorem} \label{T:existence1}
Assume $\Xcal$ is locally compact, $\Dcal\subseteq C_0(\Xcal)$ is dense, and $L(\Dcal)\subseteq C_0(\Xcal)$. Then $L$ satisfies the positive maximum principle on $\Xcal$ if and only if there exists a possibly killed solution to the martingale problem for $(L,\Dcal,\Xcal)$ for every initial condition $\mu\in\Xcal$. 
\end{theorem}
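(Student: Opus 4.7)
I would split into the two implications. The reverse implication, that existence of a possibly killed solution for every $\mu\in\Xcal$ implies the positive maximum principle, is the routine part. Pick $f\in\Dcal$ with $f(\mu)=\sup_\Xcal f\ge0$. Extend $f$ to $\Xcal^\dag$ by setting $f(\dag):=0$, so that $f\in\Dcal^\dag$ with $L^\dag f(\dag)=0$ and $L^\dag f|_\Xcal=Lf$. Since $f(\dag)=0\le f(\mu)$, any solution $X$ with $X_0=\mu$ satisfies $f(X_t)\le f(\mu)$ almost surely for all $t\ge 0$. Localizing the local martingale $f(X_t)-f(\mu)-\int_0^t L^\dag f(X_s)\,ds$ and using that $L^\dag f\in C(\Xcal^\dag)$ is bounded, I take expectations to obtain $\int_0^t\E[L^\dag f(X_s)]\,ds\le 0$; dividing by $t$ and letting $t\downarrow 0$, the right-continuity of $X$ yields $Lf(\mu)=L^\dag f(\mu)\le 0$.

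For the forward implication I would pass to the one-point compactification $\Xcal^\dag$ and apply the Hille--Yosida theorem on that compact Polish space, in the spirit of \cite[Theorem~4.5.4]{EK:05}. The plan is in three steps. First, verify that $(L^\dag,\Dcal^\dag)$ inherits the positive maximum principle on $\Xcal^\dag$: if $f\in\Dcal^\dag$ attains $\sup_{\Xcal^\dag}f\ge0$ at some $\mu^*$, then either $\mu^*=\dag$, in which case $L^\dag f(\mu^*)=0$, or $\mu^*\in\Xcal$, in which case $(f-f(\dag))|_\Xcal\in\Dcal$ attains its supremum at $\mu^*$ with nonnegative value, and the PMP of $L$ delivers $L^\dag f(\mu^*)\le0$. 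Check also that $\Dcal^\dag$ is dense in $C(\Xcal^\dag)$, which follows from density of $\Dcal$ in $C_0(\Xcal)$ and the decomposition $C(\Xcal^\dag)=C_0(\Xcal)\oplus\R$, and that $L^\dag(\Dcal^\dag)=L(\Dcal)\subseteq C(\Xcal^\dag)$. Second, invoke Hille--Yosida in Lumer--Phillips form: the PMP makes $L^\dag$ dissipative on $C(\Xcal^\dag)$, and combined with the range condition $\overline{(\lambda-L^\dag)(\Dcal^\dag)}=C(\Xcal^\dag)$ for some $\lambda>0$, this produces a Feller semigroup $(T_t)$ on $C(\Xcal^\dag)$ whose generator extends the closure of $L^\dag$. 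Third, apply the classical construction that associates to any Feller semigroup on a compact Polish space a strong Markov process with RCLL paths; by Dynkin's formula this process is a solution to the $(L^\dag,\Dcal^\dag,\Xcal^\dag)$ martingale problem for every initial condition, which is by definition a possibly killed solution to the $(L,\Dcal,\Xcal)$ martingale problem.

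The main obstacle is the range condition in the Hille--Yosida step: proving $\overline{(\lambda-L^\dag)(\Dcal^\dag)}=C(\Xcal^\dag)$ is not automatic from density of the domain and requires the careful approximation and extension arguments carried out in \cite[Chapter~4]{EK:05}. I would appeal directly to \cite[Theorem~4.5.4]{EK:05} for this hard direction rather than reproduce the construction here, noting that the verification of its hypotheses has been arranged precisely by passing from $(L,\Dcal,\Xcal)$ to its compactified extension $(L^\dag,\Dcal^\dag,\Xcal^\dag)$ in Step~1.
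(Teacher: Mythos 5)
The paper itself does not prove this theorem: it simply observes that the reverse implication is routine and cites \cite[Theorem~4.5.4]{EK:05} for the forward one. Your proposal is therefore compatible with the paper's (non-)argument: you give a correct, careful proof of the easy direction (bounding the local martingale by boundedness of $f$ and $L^\dag f$ on the compact $\Xcal^\dag$, then taking $t\downarrow0$ using right-continuity), and for the hard direction you reduce to $(L^\dag,\Dcal^\dag,\Xcal^\dag)$ on the one-point compactification and ultimately appeal to the same reference. One caveat is worth flagging: your sketch of the forward direction via Lumer--Phillips mischaracterizes how \cite[Theorem~4.5.4]{EK:05} actually works. That theorem is proved \emph{without} the range condition $\overline{(\lambda-L^\dag)(\Dcal^\dag)}=C(\Xcal^\dag)$; indeed the whole point of the positive-maximum-principle machinery in Chapter~4 of Ethier--Kurtz is to produce a solution of the martingale problem when the range condition is unavailable (via discrete-time approximation and tightness, not via constructing a Feller semigroup). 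So the ``main obstacle'' you identify is not an obstacle in the reference's argument; it is an obstacle only in the alternative Hille--Yosida route you sketch. Since you end by citing the correct theorem, the proof stands, but the middle step should either be dropped or described as the genuinely different (and generally unavailable) semigroup route.
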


One is often interested in solutions that are not killed. A general condition for this is that there exist functions $f_n\in\Dcal$ such that $f_n\to1$ and $(Lf_n)^-\to0$ in the bounded pointwise sense. This follows from a slight modification of \cite[Theorem~4.3.8 and Remark~4.5.5]{EK:05}.

Since $M_1(E)$ is compact whenever $E\subset\R^d$ is compact, we obtain the following result as a direct application of Theorem~\ref{T:existence1}.

\begin{corollary}\label{cor1}
Let $\Xcal=M_1(E)$ with $E\subset \R^d$ compact. Assume $\Dcal\subseteq C(\Xcal)$ is a dense subset containing the constant function 1, and $L(\Dcal)\subseteq C(\Xcal)$. Then $L$ satisfies the positive maximum principle on $\Xcal$ if and only if there exists a possibly killed solution to the martingale problem for $(L,\Dcal,\Xcal)$ for every initial condition $\mu\in \Xcal$. If additionally $L1=0$, then every such solution $X$ satisfies $X_t\in M_1(E)$ for all $t\ge0$, and is thus a solution to the martingale problem for $(L,\Dcal,\Xcal)$.
\end{corollary}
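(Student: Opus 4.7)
The plan is to reduce everything to Theorem~\ref{T:existence1}, exploiting the compactness of $\Xcal = M_1(E)$ (which follows from compactness of $E$ via Prokhorov's theorem). In the compact setting $C_0(\Xcal) = C(\Xcal)$, so the hypotheses $\Dcal \subseteq C(\Xcal)$ and $L(\Dcal) \subseteq C(\Xcal)$ already match those required by Theorem~\ref{T:existence1}, and one direction of the equivalence (solutions imply PMP) follows at once.

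The only bookkeeping needed concerns the extension to $\Xcal^\dag$. Since $\Xcal$ is already compact, we endow $\Xcal^\dag$ with the topology in which $\dag$ is isolated, giving a compact Polish space on which $C_0(\Xcal^\dag) = C(\Xcal^\dag)$. Using that $1 \in \Dcal$, the definition of $\Dcal^\dag$ unpacks to $\{g : g|_\Xcal \in \Dcal,\ g(\dag) \in \R\}$, which is dense in $C(\Xcal^\dag)$ because $\Dcal$ is dense in $C(\Xcal)$ and $g(\dag)$ is unconstrained. The inclusion $L^\dag(\Dcal^\dag) \subseteq C(\Xcal^\dag)$ follows from $L(\Dcal) \subseteq C(\Xcal)$ together with $L^\dag g(\dag) = 0$. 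The positive maximum principle transfers as well: at $\dag$ it is automatic since $L^\dag g(\dag) = 0$, and at $\mu \in \Xcal$, if $g \in \Dcal^\dag$ attains a nonnegative maximum at $\mu$, then $f := (g - g(\dag))|_\Xcal \in \Dcal$ attains its (still nonnegative) maximum at $\mu$, so the PMP for $L$ yields $L^\dag g(\mu) = Lf(\mu) \le 0$. Applying Theorem~\ref{T:existence1} to $(L^\dag,\Dcal^\dag,\Xcal^\dag)$ then produces the possibly killed solution and completes the first equivalence.

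For the final assertion, I would use the remark immediately following Theorem~\ref{T:existence1}: the constant sequence $f_n = 1 \in \Dcal$ satisfies $f_n \to 1$ and $(Lf_n)^- \equiv 0$ because $L1 = 0$, so the possibly killed solution is not actually killed and takes values in $\Xcal$ for all $t \ge 0$. Equivalently and more directly, one may apply the martingale property to $g \in \Dcal^\dag$ defined by $g|_\Xcal = 1$ and $g(\dag) = 0$: since $L^\dag g \equiv 0$ and $g(X_0) = 1$, the bounded process $g(X_t) \in \{0,1\}$ is a martingale with constant expectation $1$, forcing $P(X_t \in \Xcal) = 1$ for each $t$, which upgrades to $P(X_t \in \Xcal \text{ for all } t \ge 0) = 1$ by right-continuity since $\dag$ is isolated and absorbing.

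I do not anticipate any significant obstacle; the content is essentially a translation of Theorem~\ref{T:existence1} to the compact case, with the only care required being the handling of the isolated cemetery state and verifying that the PMP, density, and continuity hypotheses all pass cleanly from $(L,\Dcal,\Xcal)$ to $(L^\dag,\Dcal^\dag,\Xcal^\dag)$.
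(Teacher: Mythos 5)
Your proposal is correct and follows the same route the paper intends (the paper simply remarks that the corollary is a direct application of Theorem~\ref{T:existence1}, together with the not-killed criterion stated just after it, instantiated with $f_n\equiv 1$). One small imprecision: Theorem~\ref{T:existence1} should be applied directly to $(L,\Dcal,\Xcal)$ rather than to $(L^\dag,\Dcal^\dag,\Xcal^\dag)$, since its conclusion is already phrased in terms of a possibly killed solution (i.e.\ a solution of the $\dag$-extended problem); applying it to the $\dag$-system would formally yield a solution of a doubly-extended problem, so your explicit verification of the hypotheses for $(L^\dag,\Dcal^\dag,\Xcal^\dag)$, while accurate, is redundant rather than being a needed step.
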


\section{Main result}\label{main}

Let $w\colon\R^d\to[1,\infty)$ be a $C^\infty$ function such that
\begin{equation}\label{eq_wtoinfty}
\lim_{|x|\to\infty}w(x)=\infty,
\end{equation}
and fix a closed subset $E\subseteq\R^d$. Define the set of probability measures on $E$ with finite $w$-moment,
\[
\Pcal_w = \Pcal_w(E) = \{\mu\in M_1(E)\colon \langle w,\mu\rangle<\infty\},
\]
topologized by the following notion of convergence: $\mu_n\to\mu$ if and only if $\mu_n\Rightarrow\mu$ and $\langle w,\mu_n\rangle\to\langle w,\mu\rangle$. This turns $\Pcal_w$ into a Polish space. A possible choice of metric is
\begin{equation}\label{eq_dw_metric}
d_w(\mu_1,\mu_2) =d(w\mu_1, w\mu_2),
\end{equation}
where $d(\fdot,\fdot)$ is the Prokhorov metric on $M_+(E)$, and the measures $w\mu_i$ are given by $(w\mu_i)(dx)=w(x)\mu_i(dx)$. The Prokhorov metric is discussed in detail in Section~3.1 of \cite{EK:05}. See also the discussion after Example~A.42 in \cite{FS:04}.

\begin{example}
If $w(x)=|x|^p$ outside some ball around the origin, then $\Pcal_w$ is the set of probability measures on $E$ with finite $p$-th moments, and \eqref{eq_dw_metric} generates the same topology as the Wasserstein $p$-distance $\Wcal_p$. 
\end{example}

We will use the following class of test functions:
\begin{equation}\label{Dw}
\Dcal_w = \text{algebra generated by all $\mu\mapsto\langle \varphi,\mu\rangle e^{-\langle w,\mu\rangle}$ with $\varphi\in C^\infty_c(\R^d)$}.\footnote{That is, $\Dcal_w$ consists of all sums of products of functions of the given form $\langle \varphi,\mu\rangle e^{-\langle w,\mu\rangle}$. It does not contain the constant function $1$.}
\end{equation}
With the convention $\exp(-\langle w,\mu\rangle)=0$ if $\langle w,|\mu|\rangle=\infty$, functions in $\Dcal_w$ can be evaluated at any $\mu\in M(E)$. We will obtain possibly killed solutions to the martingale problem for $(L,\Dcal_w,\Pcal_w)$, where $L$ is an operator satisfying suitable assumptions. In order to do so, fix a cemetery state $\dag$ and define $\Pcal_w^\dag=\Pcal_w\cup\{\dag\}$.\footnote{We may take any $\dag\notin M(E^\Delta)$, where $E^\Delta$ is the one-point compactification of $E$.} The topology is extended to $\Pcal_w^\dag$ by declaring that a sequence of measures $\mu_n\in\Pcal_w$ converges to $\dag$ if $\langle w,\mu_n\rangle\to\infty$. Thus $\lim_{\mu\to\dag}f(\mu)=0$ for any $f\in\Dcal_w$, so that $\Dcal_w^\dag$ as defined in Section~\ref{secmgprob} is indeed contained in $C(\Pcal_w^\dag)$. If one assumes that $\lim_{\mu\to\dag}Lf(\mu)=0$ for every $f\in\Dcal_w$, which we shall, it follows that $L^\dag(\Dcal^\dag)\subseteq C(\Pcal_w^\dag)$ as well, where $L^\dag$ is defined as in Section~\ref{secmgprob}. This allows us to speak about possibly killed solutions to the martingale problem.

If $E$ is compact, then $\Pcal_w=M_1(E)$ is also compact, and Corollary~\ref{cor1} yields a satisfactory existence theory for the martingale problem. From now on we consider the opposite situation, and assume that
\[
\text{$E$ is not compact.}
\]
In this case $\Pcal_w$ is not even locally compact, and the classical results are not directly applicable. Instead, we will embed $\Pcal_w$ into a space that is locally compact, where Theorem~\ref{T:existence1} can be applied. To describe this embedding, let $E^\Delta=E\cup\{\Delta\}$ be the one-point compactification of $E$, for some $\Delta\notin E$. The space $M_+(E^\Delta)$ is equipped with the weak topology. Define a map
\begin{equation}\label{eq_T_embedding}
T\colon \Pcal_w \to M_+(E^\Delta), \quad T(\mu)(dx) = w(x)\mu(dx \cap E),
\end{equation}
which is a topological embedding of $\Pcal_w$ into $M_+(E^\Delta)$. Recall that $w\geq1$ and observe that
\[
T(\Pcal_w) = \{\nu\in M_+(E^\Delta) \colon \langle w^{-1}, \nu\rangle = 1,\ \nu(\{\Delta\})=0\},
\]
where $w^{-1}(x)=1/w(x)$, which is well defined everywhere on $E^\Delta$ with the convention $w^{-1}(\Delta)=\lim_{|x|\to\infty}w^{-1}(x)$.
Let $\Xcal$ denote the weak closure of $T(\Pcal_w)$; this will serve as state space for an auxiliary martingale problem. Since $\Xcal$ is a closed subset of the locally compact Polish space $M_+(E^\Delta)$, it is itself locally compact Polish. This places us in the framework of Theorem~\ref{T:existence1}. Note that we have the explicit description
\begin{equation}\label{Xdescr}
\Xcal = \{\nu\in M_+(E^\Delta) \colon \langle w^{-1}, \nu\rangle = 1\}.
\end{equation}
 In particular, a measure $\nu\in\Xcal$ lies in $T(\Pcal_w)$ if and only if it does not charge $\Delta$. 

Using $T$, any martingale problem with state space $\Pcal_w$ and operator $f\mapsto Lf$ can be regarded as a martingale problem with state space $T(\Pcal_w)$ and operator $\widetilde f\mapsto L(\widetilde f\circ T)\circ T^{-1}$. Our strategy is to extend this to a martingale problem with state space $\Xcal$ and, then, show that the solution does not charge $\Delta$ and thus actually lies in $T(\Pcal_w)$. This gives a solution to the original martingale problem.

These steps depend in a somewhat delicate way on the particular choice \eqref{Dw} of test functions. In particular, in order to apply Theorem~\ref{T:existence1}, the function $f\circ T^{-1}$ obtained by pushing forward a function $f\in\Dcal_w$ using $T$ needs to be extendible to a function in $C_0(\Xcal)$. This is captured by the following definition.

\begin{definition}\label{def_C}
A function $f\colon\Pcal_w\to\R$ is of {\em $C_0$ type} if $f\circ T^{-1}\colon T(\Pcal_w)\to\R$ extends to a $C_0$ function on $\Xcal$. This extension is again denoted by $f\circ T^{-1}$.
\end{definition}

It is clear that sums and products of functions of $C_0$ type are again of $C_0$ type; these functions thus form an algebra. Since $\nu\mapsto\langle\varphi,T^{-1}(\nu)\rangle=\langle\varphi w^{-1},\nu\rangle$ is continuous on $\Xcal$ for any $\varphi\in C^\infty_c(\R^d)$, the product of $\langle\varphi,\mu\rangle$ and a function of $C_0$ type is again of $C_0$ type. Also, $\mu\mapsto e^{-\langle w,\mu\rangle}$ is certainly of $C_0$ type. We deduce in particular that every $f\in\Dcal_w$ is of $C_0$ type.

\begin{example}
Suppose $E=\R$ and let $f(\mu)=\langle \varphi,\mu\rangle e^{-\langle w,\mu\rangle}$ for some $\varphi\in C(\R)$. When is $f$ of $C_0$ type? Set $\mu_n=w(n)^{-1}\delta_n + (1-w(n)^{-1})\delta_1 \in \Pcal_w$. Then $\mu_n$ does not converge to any element of $\Pcal_w$, but $\nu_n=T(\mu_n)=\delta_n+w(1)(1-w(n)^{-1})\delta_1$ converges to $\delta_\Delta+w(1)\delta_1$ in $M_+(\R^\Delta)$. On the other hand, $\lim_n f\circ T^{-1}(\nu_n)=(\lim_n \varphi(n)/w(n) + \varphi(1)) e^{-1-w(1)}$ only exists if $\varphi(n)/w(n)$ has a finite limit. By considering similar sequences $\mu_n$, one sees that $f$ is of $C_0$ type if and only if $\varphi(x)/w(x)$ has a finite limit as $x\to\Delta$.
\end{example}

The following is the main result of this paper. To state it, we define the compact subset $\Xcal_c=\{\nu\in\Xcal\colon \langle 1,\nu\rangle\le c\}$ for any constant $c\ge1$. The meaning of its conditions, and examples of how they can be verified, are discussed in later sections.

\begin{theorem}\label{T_Pw}
Consider a linear operator $L\colon \Dcal_w\to C(\Pcal_w)$, and assume the following conditions are satisfied:
\begin{enumerate}
\item\label{T_Pw_1} $L$ satisfies the positive maximum principle on $\Pcal_w$,
\item\label{T_Pw_2_new} $Lf$ is of $C_0$ type for every $f\in\Dcal_w$,
\item\label{T_Pw_3_new} for every constant $c\ge1$, there exist a function $\widetilde f\colon\Xcal\to\R$ and pairs $(\widetilde f_m,\widetilde g_m)$ in the bp-closure of the restricted graph
\begin{equation}\label{T_Pw_3_eq1}
\{(\widetilde f, \widetilde g)\in C_0(\Xcal)\times C(\Xcal_c)\colon \widetilde f\circ T\in \Dcal_w,\, \widetilde g= L(\widetilde f\circ T) \circ T^{-1}|_{\Xcal_c}\}
\end{equation}
such that $(\widetilde f_m,\widetilde g_m^+)$ are uniformly bounded in $m$, and
\begin{enumerate}
\item\label{cond_a_new} $\widetilde f_m\to\widetilde f$ pointwise, $\widetilde f\ge0$, and $\widetilde f(\nu)=0$ for $\nu\in\Xcal_c$ if and only if $\nu(\{\Delta\})=0$,
\item\label{cond_b_new} $\limsup_{m\to\infty} \widetilde g_m^+ \le c' \widetilde f|_{\Xcal_c}$ pointwise for some constant $c'$.
\end{enumerate}
\end{enumerate}
Then there exists a possibly killed solution to the martingale problem for $(L,\Dcal_w,\Pcal_w)$ for every initial condition $\mu\in\Pcal_w$. Furthermore, assume that
\begin{enumerate}[resume]
\item\label{T_Pw_6}
there exist pairs $(f_n,g_n)$ in the bp-closure of the graph $\{(f,Lf)\colon f\in\Dcal_w\}$ of $L$ such that $(f_n,g_n^-)\to (1,0)$ in the bounded pointwise sense.
\end{enumerate}
Then every possibly killed solution $X$ to the martingale problem for $(L,\Dcal_w,\Pcal_w)$ satisfies $X_t\in \Pcal_w$ for all $t\ge0$, and is thus a solution to the martingale problem for $(L,\Dcal_w,\Pcal_w)$.
\end{theorem}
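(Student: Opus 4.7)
The strategy is to lift the problem through $T$ to the locally compact space $\Xcal$, apply Theorem~\ref{T:existence1} there, then use~(iii) to confine the resulting process to $T(\Pcal_w)$, and finally pull back. For $f\in\Dcal_w$ set $\widetilde f:=f\circ T^{-1}$ and $\widetilde L\widetilde f:=Lf\circ T^{-1}$; by the discussion after Definition~\ref{def_C} and hypothesis~(ii), both belong to $C_0(\Xcal)$, giving an operator $\widetilde L\colon\widetilde\Dcal\to C_0(\Xcal)$. The subalgebra $\widetilde\Dcal$ separates the points of $\Xcal$ (the maps $\nu\mapsto\langle\varphi w^{-1},\nu\rangle$ for $\varphi\in C^\infty_c(\R^d)$ separate restrictions to $E$, while the factors $e^{-k\langle 1,\cdot\rangle}$ for varying $k$ distinguish mass at $\Delta$) and vanishes nowhere (since $w^{-1}(\Delta)=0$ by~\eqref{eq_wtoinfty}, every $\nu\in\Xcal$ has $\nu|_E\ne 0$), so Stone--Weierstrass yields density in $C_0(\Xcal)$. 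Condition~(i) gives the PMP on $T(\Pcal_w)$, and I transfer it to all of $\Xcal$ by a density/continuity argument: if the nonnegative supremum of $\widetilde f\in\widetilde\Dcal$ is attained at $\nu^*\in\Xcal\setminus T(\Pcal_w)$, approximate by $\nu_n\in T(\Pcal_w)$, use small perturbations within $\widetilde\Dcal$ to shift the maximum into the dense subset, apply the PMP there, and pass to the limit at $\nu^*$ by continuity of $\widetilde L\widetilde f$.

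Theorem~\ref{T:existence1} now produces, for any initial datum $T(\mu)$, a possibly killed solution $\widetilde X$ to the martingale problem for $(\widetilde L,\widetilde\Dcal,\Xcal)$. To rule out that $\widetilde X$ charges $\Delta$, fix $c\ge\langle w,\mu\rangle$ and let $\tau_c$ be the first exit of $\widetilde X$ from the compact set $\Xcal_c$. A standard fact for martingale problems is that the local-martingale property extends to bp-closures of the (restricted) graph, so the pairs $(\widetilde f_m,\widetilde g_m)$ from~(iii), localized at $\tau_c$ and using the uniform boundedness of $\widetilde f_m$ and $\widetilde g_m^+$ together with compactness of $\Xcal_c$, produce true-martingale identities
\[
\E[\widetilde f_m(\widetilde X_{t\wedge\tau_c})] = \widetilde f_m(T(\mu)) + \E\!\left[\int_0^{t\wedge\tau_c}\widetilde g_m(\widetilde X_s)\,ds\right].
\]
Passing $m\to\infty$, the left-hand side tends to $\E[\widetilde f(\widetilde X_{t\wedge\tau_c})]$ by bounded convergence; $\widetilde f_m(T(\mu))\to\widetilde f(T(\mu))=0$ since $T(\mu)\in\Xcal_c$ and $T(\mu)(\{\Delta\})=0$; and using $\widetilde g_m\le\widetilde g_m^+$, reverse Fatou on $P\otimes ds$ (legitimate by uniform upper bound on $\widetilde g_m^+$), and~(b), the remaining term is bounded by $c'\int_0^t\E[\widetilde f(\widetilde X_{s\wedge\tau_c})]\,ds$. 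Gronwall then forces $\widetilde f(\widetilde X_{t\wedge\tau_c})=0$ a.s., and by~(a) this means $\widetilde X_{t\wedge\tau_c}\in T(\Pcal_w)$ a.s. Letting $c\to\infty$, $\widetilde X_t\in T(\Pcal_w)$ for every $t$ strictly before the $\Xcal$-killing time. Setting $X_t:=T^{-1}(\widetilde X_t)$ on that event and $X_t:=\dag$ otherwise produces a process in $\Pcal_w^\dag$; since $T$ is a topological embedding with $Lf=\widetilde L\widetilde f\circ T$ on $\Pcal_w$ and the cemeteries of $\Pcal_w^\dag$ and $\Xcal^\dag$ correspond under $T$, the martingale identity for $\widetilde f^\dag$ against $\widetilde X$ pulls back to the identity for $f^\dag$ against $X$, giving a possibly killed solution to the original martingale problem.

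For the final statement, under~(iv) I invoke the standard bp-closure criterion in the spirit of \cite[Thm.~4.3.8, Rmk.~4.5.5]{EK:05}: the pairs $(f_n,g_n)$ with $f_n\to 1$ and $g_n^-\to 0$ boundedly pointwise give, by bp-preservation of the local-martingale property and a localizing sequence $\sigma_k\to\infty$, local martingales $f_n^\dag(X_{\cdot\wedge\sigma_k})-f_n(\mu)-\int_0^{\cdot\wedge\sigma_k} L^\dag f_n^\dag(X_s)\,ds$; taking expectations, letting $k\to\infty$ and then $n\to\infty$ while applying Fatou to $g_n^-$, one obtains $\P(X_t\ne\dag)\ge 1$ for every $t$, so $X$ never reaches the cemetery and is a genuine (non-killed) solution. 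The principal obstacle I anticipate is the Gronwall step in the second paragraph, since the $\widetilde g_m$ are only bounded from above, defined only on $\Xcal_c$, and~(b) is only a $\limsup$; combining the bp-closure martingale property, the localization at $\tau_c$, and a Fatou argument on $\widetilde g_m^+$ must be arranged carefully. The PMP transfer in the first paragraph is a secondary technical point.
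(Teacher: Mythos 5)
Your overall architecture matches the paper's: lift to $\Xcal$ via $T$, apply Theorem~\ref{T:existence1}, use~(iii) with a Gronwall argument to keep the process off $\Delta$, pull back, and use~(iv) for non-explosion. The Stone--Weierstrass density claim, the localization at $\tau_c$, the Fatou/Gronwall step, and the pullback are all essentially the paper's proof.

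The genuine gap is the positive maximum principle transfer. You write that one can ``use small perturbations within $\widetilde\Dcal$ to shift the maximum into the dense subset.'' This is the crux, and the perturbation idea does not work as stated. The set $T(\Pcal_w)=\{\nu\in\Xcal\colon\nu(\{\Delta\})=0\}$ is dense but not open in $\Xcal$; if $\widetilde f\in\Dcal$ attains its maximum at $\nu^*=\nu_0+\lambda\delta_\Delta$ with $\lambda>0$, a small perturbation of $\widetilde f$ will generically produce a maximizer that still charges $\Delta$, and approximate maximizers in $T(\Pcal_w)$ are useless because the PMP on $\Pcal_w$ only applies to \emph{exact} maximizers of $f=\widetilde f\circ T$. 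There is no generic density argument that can repair this.

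What the paper does instead is structural, not perturbative: Lemma~\ref{L_Dprops}\ref{L_Dprops:4} constructs, for any $\nu^*\in\Xcal$ and any $\widetilde f\in\Dcal$, explicit measures $\nu_n\in T(\Pcal_w)$ with $\nu_n\Rightarrow\nu^*$ and $\widetilde f(\nu_n)=\widetilde f(\nu^*)$ \emph{exactly}, for all $n$. The measures are $\nu_n=(1-t_n^{-1})\nu_0+t_n^{-1}w(x_n)\delta_{x_n}$ with $x_n\to\Delta$: since the $\varphi_i$ in $\widetilde f$ are compactly supported, the mass moved to $x_n$ only enters $\widetilde f$ through $e^{-\langle 1,\cdot\rangle}$, and the scalar $t_n$ is tuned (equation~\eqref{eq_wx_ncondition}) to match the exponential factor at $\nu^*$ exactly. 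This is where the specific form of the test functions~\eqref{Dw}, the compact supports, and hypothesis~\eqref{eq_wtoinfty} (which forces $t_n\to\infty$) all enter. Once $\widetilde f(\nu_n)=\max_\Xcal\widetilde f$ holds exactly at $\nu_n\in T(\Pcal_w)$, the PMP at $\mu_n=T^{-1}(\nu_n)$ gives $\widetilde L\widetilde f(\nu_n)\le 0$, and continuity of $\widetilde L\widetilde f$ yields $\widetilde L\widetilde f(\nu^*)\le0$. Your argument needs to be replaced by this (or an equivalent) explicit construction; without it, the first paragraph does not establish the PMP on $\Xcal$ and nothing downstream can proceed.
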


\begin{remark}\label{rem2}
While our focus is the case where \eqref{eq_wtoinfty} holds, one could also take $w\equiv1$. In this case $\Pcal_w=M_1(E)$ is the set of all probability measures on $E$ with the topology of weak convergence. A slight modification of our main result holds also for this case. Specifically, letting $\Dcal_w$ denote the algebra generated by $\langle\varphi,\mu\rangle$ with $\varphi\in\R+C^\infty_c(\R^d)$, Theorem~\ref{T_Pw} remains true as stated. Note that condition \ref{T_Pw_3_new} only needs to be verified for $c=1$. The proof remains unchanged, apart from slightly different arguments in Lemma~\ref{L_Dprops}\ref{L_Dprops:3}--\ref{L_Dprops:4} below.
\end{remark}

The rest of this section is devoted to the proof of Theorem~\ref{T_Pw}, so we now assume that its conditions are satisfied. As discussed above, the proof uses the embedding $T$ in \eqref{eq_T_embedding} to transform the original martingale problem into an auxiliary martingale problem on the state space $\Xcal$ in \eqref{Xdescr}. The domain of test functions for the auxiliary martingale problem is
\begin{equation} \label{eq_T_Dw}
\Dcal = \text{algebra generated by all $\nu\mapsto\langle \varphi,\nu\rangle e^{-\langle 1,\nu\rangle}$ with $\varphi\in C^\infty_c(\R^d)$}.
\end{equation}
The elements of $\Dcal$ can be evaluated at any $\nu\in M(E^\Delta)$, with the conventions $\varphi(\Delta)=0$ and $1(\Delta)=1$. Note also that $\Dcal\subset C_0(\Xcal)$, and that its elements $\widetilde f$ satisfy $\widetilde f\circ T\in\Dcal_w$. Due to Theorem~\ref{T_Pw}\ref{T_Pw_2_new}, we can then define a linear operator $\widetilde L\colon\Dcal\to C_0(\Xcal)$ by the formula
\begin{equation}\label{L_tilde}
\widetilde L\widetilde f = L(\widetilde f\circ T)\circ T^{-1}.
\end{equation}

\begin{lemma} \label{L_Dprops}
We have the following properties.
\begin{enumerate}
\item\label{L_Dprops:3} $\Dcal$ is dense in $C_0(\Xcal)$,
\item\label{L_Dprops:4} for every $\widetilde f\in\Dcal$ and every $\nu^*\in\Xcal$, there exist measures $\nu_n\in T(\Pcal_w)$ with $\nu_n\Rightarrow\nu^*$ and $\widetilde f(\nu_n)=\widetilde f(\nu^*)$ for all $n\in\N$.
\end{enumerate}
\end{lemma}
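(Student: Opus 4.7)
For part (i), I would apply the Stone--Weierstrass theorem on the locally compact Polish space $\Xcal$. The set $\Dcal$ is an algebra by definition, so it suffices to verify non-vanishing at every point and separation of points. For non-vanishing, for any $\nu\in\Xcal$ the constraint $\langle w^{-1},\nu\rangle=1$ together with $w^{-1}(\Delta)=0$ forces $\nu|_E\ne 0$, so there exists $\varphi\in C^\infty_c(\R^d)$ with $\langle\varphi,\nu\rangle\ne 0$, giving $\langle\varphi,\nu\rangle e^{-\langle 1,\nu\rangle}\in\Dcal$ nonzero at $\nu$. For separation, suppose the generators $\langle\varphi,\nu\rangle e^{-\langle 1,\nu\rangle}$ all agree at two distinct $\nu_1,\nu_2\in\Xcal$. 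Writing $a_i=e^{-\langle 1,\nu_i\rangle}>0$, we get $a_1\langle\varphi,\nu_1\rangle=a_2\langle\varphi,\nu_2\rangle$ for all $\varphi\in C^\infty_c(\R^d)$, hence $a_1\nu_1|_E=a_2\nu_2|_E$ as Borel measures on $E$. Integrating $w^{-1}$ and using $\int w^{-1}d\nu_i|_E=1$ yields $a_1=a_2$, so $\nu_1|_E=\nu_2|_E$ and $\langle 1,\nu_1\rangle=\langle 1,\nu_2\rangle$, whence $\nu_1=\nu_2$, a contradiction.

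For part (ii), I would construct $\nu_n$ explicitly. Write $\mu=\nu^*|_E$ and $\alpha=\nu^*(\{\Delta\})$. The case $\alpha=0$ gives $\nu_n=\nu^*$, so assume $\alpha>0$. Let $\varphi_{j,i}$ enumerate the finitely many test functions appearing in the given $\widetilde f\in\Dcal$, and pick $x_n\in E$ with $|x_n|\to\infty$. Consider the ansatz
\[
\nu_n = (1-\lambda_n h)\,\mu + (\alpha+\lambda_n)\delta_{x_n}
\]
with a bounded $h:E\to\R$ and scalars $\lambda_n\to 0$. For $x_n$ outside $\bigcup_{j,i}\mathrm{supp}(\varphi_{j,i})$ and $n$ large, the normalizations $\int h\,d\mu=1$ and $\int h\varphi_{j,i}\,d\mu=0$ for all $j,i$ give $\langle 1,\nu_n\rangle=\langle 1,\nu^*\rangle$ and $\langle\varphi_{j,i},\nu_n\rangle=\langle\varphi_{j,i},\nu^*\rangle$, which together imply the exact equality $\widetilde f(\nu_n)=\widetilde f(\nu^*)$. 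The remaining condition $\int w^{-1}d\nu_n=1$ (i.e.\ $\nu_n\in T(\Pcal_w)$) then determines $\lambda_n=\alpha/\bigl(w(x_n)\int hw^{-1}d\mu-1\bigr)\to 0$, provided $\int hw^{-1}d\mu\ne 0$. Existence of such a bounded $h$ meeting the three scalar constraints reduces to inverting a finite Gram matrix in $\mathrm{span}\{1,w^{-1},\varphi_{j,i}\}\subset L^2(\mu)$. Positivity of $\nu_n$ is automatic since $\lambda_n\to 0$ and $h$ is bounded, and the convergence $\nu_n\Rightarrow\nu^*$ in $\Xcal$ follows because $(1-\lambda_n h)\mu\to\mu$ in total variation while $(\alpha+\lambda_n)\delta_{x_n}\to\alpha\delta_\Delta$ in $M_+(E^\Delta)$.

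The main technical obstacle is guaranteeing solvability of the linear system for $h$, in particular the strict condition $\int hw^{-1}d\mu\ne 0$, in degenerate configurations---most notably when $\mu$ is concentrated in $\bigcup_{j,i}\mathrm{supp}(\varphi_{j,i})$ and $w^{-1}$ happens to lie in the $L^2(\mu)$-span of $\{\varphi_{j,i}\}$, which would force $\int hw^{-1}d\mu=0$. I would handle such cases by enlarging the ansatz, for instance introducing an auxiliary atom at some $y_n\in E\setminus\bigcup_{j,i}\mathrm{supp}(\varphi_{j,i})$ that supplies an extra free parameter, and---if necessary---invoking the implicit function theorem (or the intermediate value theorem when $\widetilde f$ has a single summand) on the resulting multi-parameter family so that $\widetilde f(\nu_n)=\widetilde f(\nu^*)$ is achieved exactly while $\nu_n\Rightarrow\nu^*$ is preserved.
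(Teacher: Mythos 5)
Part (i) is correct and takes essentially the same route as the paper: Stone--Weierstrass, non-vanishing from $\langle w^{-1},\nu\rangle=1$ and $w^{-1}(\Delta)=0$, and separation by extracting $\langle 1,\nu_1\rangle=\langle 1,\nu_2\rangle$ from the constraint (you do it by direct integration of $w^{-1}$, the paper by a monotone approximation $\varphi_n\uparrow w^{-1}$; these are the same idea).

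Part (ii) has a genuine gap. Your ansatz aims to match $\langle\varphi_{j,i},\nu_n\rangle$ and $\langle 1,\nu_n\rangle$ \emph{separately} with the corresponding quantities for $\nu^*$, which leads to the linear system $\int h\,d\mu=1$, $\int h\varphi_{j,i}\,d\mu=0$ in $L^2(\mu)$. This system is over-determined whenever $L^2(\mu)$ is low-dimensional relative to the number of distinct test functions. The cleanest failure is $\mu=w(x_0)\delta_{x_0}$ with some $\varphi_{j,i}(x_0)\neq 0$ (such $\mu$ are perfectly admissible, since the only constraint on $\mu=\nu^*|_E$ is $\langle w^{-1},\mu\rangle=1$). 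Then $L^2(\mu)$ is one-dimensional and the constraints $h(x_0)w(x_0)=1$ and $h(x_0)w(x_0)\varphi_{j,i}(x_0)=0$ are inconsistent --- there is no $h$ at all, not merely one with $\int hw^{-1}\,d\mu=0$. You flag the milder degeneracy ($w^{-1}$ in the span of the $\varphi_{j,i}$) but not this inconsistency, and the proposed fix (an auxiliary atom plus implicit/intermediate value arguments) is too vague to repair it: adding an atom at $y_n$ outside the supports gives no control over $\langle\varphi_{j,i},\nu_n\rangle$, so the same inconsistency persists.

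The paper sidesteps the whole issue by never matching $\langle\varphi_i,\nu_n\rangle$ and $e^{-\langle 1,\nu_n\rangle}$ individually. It takes $\nu_n=(1-t_n^{-1})\nu_0+t_n^{-1}w(x_n)\delta_{x_n}$, so that every $\langle\varphi_i,\nu_n\rangle$ equals $(1-t_n^{-1})\langle\varphi_i,\nu_0\rangle$ --- they are all scaled by the \emph{same} factor. Hence the generators $\langle\varphi_i,\nu\rangle e^{-\langle 1,\nu\rangle}$ match simultaneously for all $i$ as soon as the single scalar equation
\[
(1-t_n^{-1})\,e^{-(1-t_n^{-1})\langle 1,\nu_0\rangle - t_n^{-1}w(x_n)} = e^{-\langle 1,\nu_0\rangle-\lambda}
\]
holds, which reduces to the monotone equation $w(x_n)=t_n\log(1-t_n^{-1})+\langle 1,\nu_0\rangle+t_n\lambda$ solvable in $t_n$ for each $n$. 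This exploits the multiplicative structure of $\Dcal$ and works uniformly, with no degenerate cases. To repair your proof you would essentially need to abandon the separate matching and adopt this "match the product via a common scaling" idea.
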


\begin{proof}
\ref{L_Dprops:3}: This follows from the Stone--Weierstrass theorem once we show that $\Dcal$ separates points and vanishes nowhere on $\Xcal$. Any $\nu\in\Xcal$ satisfies $\langle w^{-1},\nu\rangle=1$, which implies that $\nu(E)>0$. It is then clear that some element of $\Dcal$ is nonzero at $\nu$. Thus $\Dcal$ vanishes nowhere. Next, take $\nu_1,\nu_2\in\Xcal$ such that $\langle \varphi,\nu_1\rangle e^{-\langle 1,\nu_1\rangle}=\langle \varphi,\nu_2\rangle e^{-\langle 1,\nu_2\rangle}$ for all $\varphi\in C^\infty_c(\R^d)$. By considering a sequence $\varphi_n\uparrow w^{-1}$ and using that $\langle w^{-1},\nu_i\rangle=1$, $i=1,2$, we deduce that $\langle 1,\nu_1\rangle=\langle 1,\nu_2\rangle$. Thus $\langle \varphi,\nu_1\rangle =\langle \varphi,\nu_2\rangle $ for all $\varphi\in C^\infty_c(\R^d)$, which implies that $\nu_1(\cdot\cap E)=\nu_2(\cdot\cap E)$. It follows that $\nu_1=\nu_2$, so that $\Dcal$ separates points as required.

\ref{L_Dprops:4}: If $\nu^*$ itself lies in $T(\Pcal_w)$, simply take $\nu_n=\nu^*$ for all $n$. We thus assume that this is not the case, which means that $\nu^*=\nu_0+\lambda\delta_\Delta$ for some $\nu_0\in M_+(E)$ and $\lambda>0$. Fix now a sequence $(x_n)_{n\in\N}\subseteq E$ with $|x_n|\to\infty$, or equivalently, $x_n\to\Delta$.
Since $w^{-1}(\Delta)=0$, we have $\langle w^{-1},\nu_0\rangle=\langle w^{-1},\nu^*\rangle=1$. 
For numbers $t_n\in(1,\infty)$ to be determined later, define the measures
\begin{equation} \label{eq_nu_n}
\nu_n = (1-t_n^{-1})\nu_0 + t_n^{-1} w(x_n) \delta_{x_n}.
\end{equation}
Since $\langle w^{-1},\nu_n\rangle=1$, these measures lie in $T(\Pcal_w)$. Fix any $\widetilde f\in\Dcal$ and observe that we have
\[
\widetilde f(\nu) = p(\langle \varphi_1,\nu\rangle e^{-\langle 1,\nu\rangle}, \ldots, \langle \varphi_m,\nu\rangle e^{-\langle 1,\nu\rangle})
\]
for some polynomial $p$ on $\R^m$ and some $\varphi_1,\ldots,\varphi_m\in C^\infty_c(\R^d)$. For all sufficiently large $n$, $x_n$ lies outside the supports of all the $\varphi_i$. For all such $n$, we have
\[
\langle \varphi_i,\nu_n\rangle e^{-\langle 1,\nu_n\rangle} = \langle \varphi_i,\nu_0\rangle (1-t_n^{-1})e^{-(1-t_n^{-1})\langle 1,\nu_0\rangle - t_n^{-1} w(x_n)}, \quad i=1,\ldots,m.
\]
On the other hand, we have
\[
\langle \varphi_i,\nu^*\rangle e^{-\langle 1,\nu^*\rangle} = \langle \varphi_i,\nu_0\rangle e^{-\langle 1,\nu_0\rangle - \lambda}, \quad i=1,\ldots,m.
\]
Therefore, if $t_n$ is chosen so that
\begin{equation} \label{eq_wx_ncondition}
w(x_n) = t_n \log(1-t_n^{-1}) + \langle 1,\nu_0\rangle + t_n \lambda,
\end{equation}
it follows that $\widetilde f(\nu_n)=\widetilde f(\nu^*)$. To see that this is possible, let $\alpha(t)$ denote the right-hand side of \eqref{eq_wx_ncondition}, with $t_n$ replaced by $t$. Then $t\mapsto\alpha(t)$ is continuous and strictly increasing on $(1,\infty)$ with $\lim_{t\to\infty}\alpha(t)=\infty$ and $\lim_{t\to-\infty}\alpha(t)=-\infty$. Therefore $\alpha$ has a continuous inverse $\alpha^{-1}(s)$ which satisfies $\lim_{s\to\infty}\alpha^{-1}(s)=\infty$. We now define
\[
t_n = \alpha^{-1}(w(x_n)).
\]
Since $\lim_{|x|\to\infty}w(x)=\infty$, we have $t_n\to\infty$, and since \eqref{eq_wx_ncondition} holds, we have $t_n^{-1}w(x_n)\to\lambda$. It is then clear from \eqref{eq_nu_n} that $\nu_n\Rightarrow \nu^*$. Therefore, after discarding the finitely many $\nu_n$ for which $x_n$ lies in the support of some $\varphi_i$, the measures $\nu_n$ satisfy the desired properties.
\end{proof}

\begin{lemma}
The operator $\widetilde L$ satisfies the positive maximum principle on $\Xcal$.
\end{lemma}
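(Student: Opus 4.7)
The plan is to reduce the positive maximum principle for $\widetilde L$ on $\Xcal$ to the already-assumed positive maximum principle for $L$ on $\Pcal_w$, by transferring any maximum through the embedding $T$. The one obstacle is that a maximizer $\nu^*\in\Xcal$ of $\widetilde f\in\Dcal$ may lie outside $T(\Pcal_w)$, i.e.\ may charge $\Delta$; this is precisely what Lemma~\ref{L_Dprops}\ref{L_Dprops:4} is designed to bypass.

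Concretely, I would fix $\widetilde f\in\Dcal$ and $\nu^*\in\Xcal$ with $\widetilde f(\nu^*)=\sup_\Xcal\widetilde f\ge0$, and use Lemma~\ref{L_Dprops}\ref{L_Dprops:4} to produce a sequence $\nu_n\in T(\Pcal_w)$ with $\nu_n\Rightarrow\nu^*$ and $\widetilde f(\nu_n)=\widetilde f(\nu^*)$ for every $n$ (taking $\nu_n=\nu^*$ if $\nu^*$ already lies in $T(\Pcal_w)$). Setting $\mu_n=T^{-1}(\nu_n)\in\Pcal_w$ and $f=\widetilde f\circ T$, which belongs to $\Dcal_w$ by construction of $\Dcal$, I would note that
\[
f(\mu_n)=\widetilde f(\nu_n)=\widetilde f(\nu^*)=\sup_\Xcal\widetilde f\ge \sup_{T(\Pcal_w)}\widetilde f=\sup_{\Pcal_w}f\ge f(\mu_n),
\]
so equality holds throughout and $f(\mu_n)=\sup_{\Pcal_w}f\ge0$. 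Hypothesis~\ref{T_Pw_1} of Theorem~\ref{T_Pw} then gives $Lf(\mu_n)\le0$, i.e.\ $\widetilde L\widetilde f(\nu_n)\le0$ for every $n$.

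Finally I would pass to the limit: by Theorem~\ref{T_Pw}\ref{T_Pw_2_new}, $Lf$ is of $C_0$ type, so $\widetilde L\widetilde f = L(\widetilde f\circ T)\circ T^{-1}$ is continuous on $\Xcal$ (indeed belongs to $C_0(\Xcal)$), and since $\nu_n\Rightarrow\nu^*$ we conclude $\widetilde L\widetilde f(\nu^*)=\lim_n \widetilde L\widetilde f(\nu_n)\le0$, as required. The entire argument is short; the only delicate point is the construction of the approximating sequence $\nu_n$ along which $\widetilde f$ is constant, which is exactly the content of Lemma~\ref{L_Dprops}\ref{L_Dprops:4} and is what made the careful choice of domain $\Dcal$ (with the exponential factor $e^{-\langle 1,\nu\rangle}$) necessary.
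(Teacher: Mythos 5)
Your argument is correct and is essentially identical to the paper's proof: both use Lemma~\ref{L_Dprops}\ref{L_Dprops:4} to produce a sequence $\nu_n\in T(\Pcal_w)$ along which $\widetilde f$ is constant and equal to its maximum, pull back to $\mu_n\in\Pcal_w$ to invoke the positive maximum principle for $L$, and then pass to the limit using the continuity of $\widetilde L\widetilde f$ on $\Xcal$. No meaningful difference in approach.
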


\begin{proof}
Let $\widetilde f\in\Dcal$ and $\nu^*\in \Xcal$ be such that $\widetilde f(\nu^*)=\max_\Xcal \widetilde f \ge 0$. By Lemma~\ref{L_Dprops}\ref{L_Dprops:4}, there exist measures $\nu_n\in T(\Pcal_w)$  with $\nu_n\Rightarrow\nu^*$ and $\widetilde f(\nu_n)=\widetilde f(\nu^*)$ for all $n\in\N$. In particular, we have $\widetilde f(\nu_n)=\max_{T(\Pcal_w)} \widetilde f \ge 0$ for all $n$. Thus, the function $f=\widetilde f\circ T\in\Dcal_w$ attains a nonnegative maximum over $\Pcal_w$ at the point $\mu_n=T^{-1}(\nu_n)$. Since $L$ satisfies the positive maximum principle on $\Pcal_w$, we get $\widetilde L\widetilde f(\nu_n) = L f(\mu_n) \le 0$. Sending $n$ to infinity and using that $\widetilde L\widetilde f$ is continuous on $\Xcal$ yields $\widetilde L\widetilde f(\nu^*)\le 0$. This shows that $\widetilde L$ satisfies the positive maximum principle on $\Xcal$, as claimed.
\end{proof}

\begin{proof}[Proof of Theorem~\ref{T_Pw}]
We have established that $\Xcal$ is locally compact, that $\Dcal\subseteq C_0(\Xcal)$ is dense, and that $\widetilde L(\Dcal)\subseteq C_0(\Xcal)$. Since $\widetilde L$ satisfies the positive maximum principle on $\Xcal$, Theorem~\ref{T:existence1} yields a possibly killed solution to the martingale problem for $(\widetilde L,\Dcal,\Xcal)$ for any initial condition $\nu\in\Xcal$. The state space $\Xcal^\dag$ for the possibly killed solution is the one-point compactification of $\Xcal$, and $\widetilde L^\dag$ and $\Dcal^\dag$ are as in Section~\ref{secmgprob}.

Fix $\mu\in \Pcal_w$ and let $Y$ be a solution with initial condition $\nu_0=T(\mu)$. We may suppose that $\dag$ is an absorbing state, that is, $Y_t=\dag$ for all $t\ge\inf\{t\ge0\colon\text{$Y_t=\dag$ or $Y_{t-}=\dag$}\}$. Assume for the moment that $Y$ actually takes values in $T(\Pcal_w)\cup\{\dag\}$. We can then define $X=T^{-1}(Y)$, with the convention $T^{-1}(\dag)=\dag$. For any $f\in\Dcal_w$, we have $\widetilde f=f\circ T^{-1}\in\Dcal$ as well as $Lf=\widetilde L\widetilde f\circ T$. These identities hold on $\Pcal_w\cup\{\dag\}$. We thus obtain that
\[
f(X_t)-f(X_0)-\int_0^t Lf(X_s)ds = \widetilde f(Y_t) - \widetilde f(Y_0) - \int_0^t \widetilde L\widetilde f(Y_s)ds, \quad t\ge0.
\]
Since the right-hand side is a local martingale, it follows that $X$ is a possibly killed solution to the martingale problem for $(L,\Dcal_w,\Pcal_w)$ with initial condition $\mu$.

We must still argue that $Y$ takes values in $T(\Pcal_w)\cup\{\dag\}$. Fix any $c\geq\max\{1,\langle 1,\nu_0\rangle\}$, and define the stopping time $\tau =\inf\{t\geq0:\langle 1,Y_t\rangle>c\}$, with the convention $\langle1,\dag\rangle=\infty$. Since $Y$ is a possibly killed solution to the martingale problem, an application of the optional stopping theorem yields
\[
\E[ \widetilde f(Y_{t\wedge\tau})] = \widetilde f(\nu_0) + \int_0^t \E[ \widetilde L^\dag \widetilde f(Y_s)1_{\{s<\tau\}}]ds
\]
for every $t\ge0$ and $\widetilde f\in\Dcal$. Since $Y_s\in\Xcal_c$ for $s<\tau$, we obtain
\begin{equation}\label{eq_Pw_proof1}
\E[ \widetilde f(Y_{t\wedge\tau})1_{\{Y_{t\wedge \tau}\neq \dag\}}] = \widetilde f(\nu_0) + \int_0^t \E[ \widetilde g(Y_s)1_{\{s<\tau\}}]ds
\end{equation}
for every $t\ge0$ and $(\widetilde f,\widetilde g)$ in the restricted graph \eqref{T_Pw_3_eq1}. Since $\widetilde f(\dag)=0$, the indicator on the left-hand side of \eqref{eq_Pw_proof1} is redundant. By dominated convergence, \eqref{eq_Pw_proof1} remains true for all $(\widetilde f,\widetilde g)$ in the bp-closure of the restricted graph \eqref{T_Pw_3_eq1}. Now the indicator is needed, since these functions are not defined at $\dag$.

Let now $\widetilde f$ and $(\widetilde f_m,\widetilde g_m)$ be as given in Theorem~\ref{T_Pw}\ref{T_Pw_3_new}. By dominated convergence, \eqref{eq_Pw_proof1}, and the conditions in Theorem~\ref{T_Pw}\ref{T_Pw_3_new}, we obtain
\begin{align*}
\E[\widetilde f(Y_{t\wedge \tau})1_{\{Y_{t\wedge \tau}\neq \dag\}}]
&=\lim_{m\to \infty}\E[\widetilde f_m(Y_{t\wedge \tau})1_{\{Y_{t\wedge \tau}\neq \dag\}}] \\
&=\lim_{m\to \infty}\left( \widetilde f_m(\nu_0) + \int_0^t \E[ \widetilde g_m(Y_s)1_{\{s<\tau\}}]ds \right) \\
&\le \widetilde f(\nu_0) + \int_0^t \E[ \limsup_{m\to \infty} \widetilde g_m(Y_{s})^+ 1_{\{s<\tau\}} ]ds \\
&\leq c' \int_0^t \E[\widetilde f(Y_s)1_{\{s<\tau\}}]ds\\
&\leq c' \int_0^t \E[\widetilde f(Y_{s\wedge \tau})1_{\{Y_{s\wedge \tau}\neq \dag\}}]ds.
\end{align*}
By the Gronwall inequality (see Theorem 5.1 in the Appendixes of \cite{EK:05}) we conclude that $\E[\widetilde f(Y_{t\wedge \tau})1_{\{Y_{t\wedge \tau}\neq \dag\}}]=0$. Together with the properties of $\widetilde f$ this implies that for $t<\tau$, $Y_t(\{\Delta\})=0$. Thus for $t<\tau$, $Y_t$ takes values in $T(\Pcal_w)$. Since the constant $c$ was arbitrarily large, and since $\dag$ is an absorbing state, it follows that $Y$ takes values in $T(\Pcal_w)\cup\{\dag\}$, as desired.

Finally, suppose condition \ref{T_Pw_6} in Theorem~\ref{T_Pw} is in force, and consider the functions $f_n$ given there. Let $X$ be a possibly killed solution to the martingale problem for $(L,\Dcal_w,\Pcal_w)$ with initial condition $\mu\in\Pcal_w$. We then get
\[
\E[1_{\{X_t\neq\dag\}}]=\lim_{n\to \infty}\E[f_n(X_t)]=\lim_{n\to \infty} \E\left[f_n(\mu)+\int_0^t  L^\dag f_n(X_s) ds\right]
\geq  1_{\{\mu\neq\dag\}}=1
\]
for every fixed $t$. As a result, $X_t\in\Pcal_w$ for all $t\ge0$, as claimed.
\end{proof}

\section{L\'evy type operators}\label{S_Levy_type}

Operators $L\colon\Dcal_w\to C(\Pcal_w)$ that satisfy the positive maximum principle are integro-differential operators of {\em L\'evy type}, which we now introduce. Such operators involve derivatives of functions $f$ of measure arguments, and we define
\[
\partial_x f(\mu) = \lim_{\varepsilon\to0}\frac{f(\mu + \varepsilon\delta_x)-f(\mu)}{\varepsilon}
\]
for all $(x,\mu)\in\R^d\times M(\R^d)$ for which the limit is well-defined. We write $\partial f(\mu)$ for the map $x\mapsto\partial_xf(\mu)$. Iterated derivatives are written $\partial^k_{x_1,\ldots,x_k}f(\mu)=\partial_{x_1}\cdots\partial_{x_k}f(\mu)$ whenever they exist, and we write $\partial^kf(\mu)$ for the corresponding map. Define also the function space
\[
C^\infty_w = \text{linear span of $w$ and $C^\infty_c(\R^d)$.}
\]
\begin{example}
If $f(\mu)=\langle\varphi,\mu\rangle e^{-\langle w,\mu\rangle}$ for some function $\varphi\colon\R^d\to\R$, then
\[
\partial_x f(\mu) = (\varphi(x) - \langle\varphi,\mu\rangle w(x)) e^{-\langle w,\mu\rangle}
\]
for any $x\in\R^d$ and any $\mu\in M(\R^d)$ such that $\varphi$ and $w$ are $\mu$-integrable. One also has the product rule $\partial(fg)=f\partial g + g\partial f$. In particular, every test function in $f\in\Dcal_w$ is infinitely many times differentiable and, for each $k$, the $k$-th derivative $\partial^k_{x_1,\ldots,x_k}f(\mu)$ is jointly continuous in $(x_1,\ldots,x_k,\mu)\in\R^k\times\Pcal_w$, and the map $\partial^kf(\mu)$ lies in $(C^\infty_w)^{\otimes k}$ for every fixed $\mu$.
\end{example}

We say that $L$ is of {\em L\'evy type} if it acts on test functions $f\in\Dcal_w$ by
\begin{equation}\label{eq_Levy_type}
\begin{aligned}
Lf(\mu) = - \kappa_\mu  f(\mu) &+ \langle B_\mu(\partial f(\mu)), \mu\rangle + \frac12 \langle Q_\mu(\partial^2f(\mu)), \mu^2\rangle\\
& \qquad+ \int_{\Pcal_w}(f(\nu)-f(\mu)-\langle\partial f(\mu),\chi(\nu-\mu)\rangle) N(\mu,d\nu),
\end{aligned}
\end{equation}
where, for each $\mu\in\Pcal_w$, the following conditions are imposed to ensure that the right-hand side is well-defined:
\begin{itemize}
\item $ \kappa_\mu \in\R_+$.
\item $B_\mu$ is a linear operator from $C^\infty_w$ to $L^1(E,\mu)$.
\item $Q_\mu$ is a linear operator from $C^\infty_w\otimes C^\infty_w$ to $L^1(E\times E,\mu\otimes\mu)$ with $\langle Q_\mu(\varphi\otimes \varphi),\mu^2\rangle\geq0$ for all $\varphi\in C^\infty_w$.
\item $N(\mu,d\nu)$ is a measure on $\Pcal_w$ with $\int_{\Pcal_w}1\land\langle\varphi,\mu-\nu\rangle^2 N(\mu,d\nu)<\infty$ for all $\varphi\in C^\infty_w$. In \eqref{eq_Levy_type}, $\chi(\nu-\mu)=(\nu-\mu)\rho(\langle w,\nu-\mu\rangle)$, with $\rho\colon\R\to[0,1]$ being a smooth function supported on $[-2,2]$ and equal to one on $[-1,1]$, acts as a truncation function for the large jumps.
\end{itemize}
If the martingale problem for $(L,\Dcal_w,\Pcal_w)$ has a possibly killed solution $X$ for any initial conditions $\mu$, then these objects govern the killing, drift, diffusion, and jump behavior of $X$, as one would expect.

\section{Verifying the positive maximum principle}

The positive maximum principle is convenient because it is often easy to verify in practice. The key tool for doing so when the operator is of L\'evy type are the following optimality conditions for functions in $\Dcal_w$.

\begin{theorem}\label{T_optimality}
Fix $f\in\Dcal_w$ and $\overline\mu\in\Pcal_w$ such that $f(\overline\mu)=\max_{\Pcal_w}f$.
\begin{enumerate}
\item\label{T_optimality:1} $\langle \partial f(\overline\mu),\mu\rangle = \sup_E\partial f(\overline\mu)$ for all $\mu\in\Pcal_w$ such that $\supp(\mu) \subseteq \supp(\overline\mu)$. In particular, $\partial_xf(\overline\mu) = \sup_E\partial f(\overline\mu)$ for all $x\in\supp(\overline\mu)$.
\item\label{T_optimality:2} $\langle \partial^2 f(\overline\mu),\mu^2\rangle \le0$ for all $\mu\in\Pcal_w-\Pcal_w$ such that $\langle1,\mu\rangle=0$ and $\supp(|\mu|) \subseteq \supp(\overline\mu)$. In particular,
\[
\partial^2_{xx}f(\overline\mu) + \partial^2_{yy}f(\overline\mu) - 2\partial^2_{xy}f(\overline\mu) = \langle\partial^2f(\overline\mu),(\delta_x-\delta_y)^2\rangle \le0, \quad x,y\in\supp(\overline\mu).
\]
\item\label{T_optimality:3b} Let $\tau\colon\R^d\to\R^{d\times d}$ be $C^1$ and satisfy the condition 
\begin{equation}\label{eqn21}
\text{$x\in E$, $\varphi\in C^\infty_c(\R^d)$, $\varphi(x)=\max_E \varphi$} \quad\Longrightarrow\quad \tau(x)^\top\nabla \varphi(x)=0.
\end{equation}
Define $A_\tau(\varphi)=\sum_{j=1}^d\tau_j^\top\nabla(\tau_j^\top\nabla\varphi)$ and $\aleph_\tau(\varphi\otimes\varphi)=\tr((\tau^\top\nabla\varphi)\otimes(\tau^\top\nabla\varphi)^\top)$ for each $\varphi\in C^\infty_w$, where $\tau_j$ denotes the $j$-th column of $\tau$. Assume the induced linear operators $A_\tau$ and $\aleph_\tau$ map $C^\infty_w$ and $C^\infty_w\otimes C^\infty_w$ to $L^1(\R^d,\overline\mu)$ and $L^1(\R^d\times\R^d,\overline\mu\otimes\overline\mu)$, respectively. Then
\begin{equation}\label{eqn16}
\langle A_\tau(\partial f(\overline\mu)),\overline\mu\rangle + \langle \aleph_\tau(\partial^2f(\overline\mu)),\overline\mu^2\rangle\le0.
\end{equation}
\end{enumerate}
If $E=\R^d$ one has the following improvement of \ref{T_optimality:3b}:
\begin{enumerate}[resume]
\item\label{T_optimality:4} Let $E=\R^d$. Then \ref{T_optimality:3b} remains true if $\tau$ is assumed to be continuous but not $C^1$. Note that in this case \eqref{eqn21} is vacuous and $A_\tau(\varphi)=\tr(\tau\tau^\top\nabla^2\varphi)$.
\end{enumerate}
\end{theorem}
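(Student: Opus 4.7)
For (i) I would use the convex-combination curves $\overline\mu_s=(1-s)\overline\mu+s\delta_y$ ($y\in E$, $s\in[0,1]$), which lie in $\Pcal_w$. Maximality forces the right derivative at $0$, namely $\partial_yf(\overline\mu)-\langle\partial f(\overline\mu),\overline\mu\rangle$, to be non-positive. Since the reverse inequality $\langle\partial f(\overline\mu),\overline\mu\rangle\le\sup_E\partial f(\overline\mu)$ is automatic, both sides coincide, so $\partial f(\overline\mu)$ equals its supremum $\overline\mu$-a.e.\ and, by continuity, on all of $\supp(\overline\mu)$. The stated identity then follows because any $\mu\in\Pcal_w$ supported in $\supp(\overline\mu)$ is a probability measure on a set where $\partial f(\overline\mu)$ is constant.

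For (ii) I would first treat perturbations $\mu\in\Pcal_w-\Pcal_w$ with $\langle 1,\mu\rangle=0$ whose Jordan parts $\mu^\pm$ are absolutely continuous w.r.t.\ $\overline\mu$ with bounded Radon--Nikodym densities. Then $\overline\mu+s\mu\in\Pcal_w$ for all sufficiently small $s\ge 0$; the first derivative of $s\mapsto f(\overline\mu+s\mu)$ at $0$ equals $\langle\partial f(\overline\mu),\mu\rangle=0$ by (i); and maximality forces $\langle\partial^2f(\overline\mu),\mu^2\rangle\le 0$. The full statement follows by approximation: any admissible $\mu$ is the $\Pcal_w-\Pcal_w$ limit of bounded-density perturbations, for instance obtained by replacing each $\mu^\pm$ with its dyadic-filtration projection onto $\overline\mu$, and since $\partial^2f(\overline\mu)\in C^\infty_w\otimes C^\infty_w$ the quadratic form $\eta\mapsto\langle\partial^2f(\overline\mu),\eta^2\rangle$ is continuous under this $w\otimes w$-controlled convergence.

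For (iii) and (iv) I would perturb $\overline\mu$ along a common-noise diffusion, which is what couples distinct points of $\overline\mu$ and produces the $\aleph_\tau(\partial^2f)$ cross-term. Let $W$ be a $d$-dimensional Brownian motion and let $\phi_t$ denote the stochastic flow of the Stratonovich SDE $dX_t=\tau(X_t)\circ dW_t$ in (iii), or of the It\^o SDE $dX_t=\tau(X_t)\,dW_t$ (interpreted via its martingale problem, since $\tau$ is only continuous) in (iv). Condition~\eqref{eqn21} makes $\phi_t$ preserve $E$ in (iii), while (iv) is unconstrained. Setting $\overline\mu_t^W=(\phi_t)_*\overline\mu$, the hypothesized $L^1$-integrability of $A_\tau$ and $\aleph_\tau$ yields $\overline\mu_t^W\in\Pcal_w$ almost surely, so $f(\overline\mu_t^W)\le f(\overline\mu)=f(\overline\mu_0^W)$ by maximality, and taking expectations shows that the right derivative of $t\mapsto\E[f(\overline\mu_t^W)]$ at $0$ is non-positive. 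To compute this derivative as the left-hand side of~\eqref{eqn16}, I would couple $N$ particles through the common $W$ with iid initial data from $\overline\mu$, apply It\^o's formula to $F(X^1_t,\ldots,X^N_t)=f(\overline\mu^N_t)$, use the chain-rule identity $\partial_{x_{i,k}}F=N^{-1}\partial_k\partial f(\overline\mu^N)(x_i)$ and its iterate together with the common-noise cross-variations $d[X^i,X^j]_t=\tau(X^i)\tau(X^j)^\top\,dt$ to read off a drift proportional to $\langle A_\tau(\partial f(\overline\mu^N)),\overline\mu^N\rangle+\langle\aleph_\tau(\partial^2f(\overline\mu^N)),(\overline\mu^N)^2\rangle$, and pass to $N\to\infty$ via propagation of chaos. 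The simpler form of $A_\tau$ in (iv) reflects the absence of a Stratonovich correction for the It\^o SDE.

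The main difficulty I anticipate is in part (iii): establishing that $\overline\mu_t^W\in\Pcal_w$ almost surely under only the local integrability hypothesis, and justifying the joint $N\to\infty$, $t\downarrow 0$ passage that identifies the drift. The explicit polynomial-exponential form of test functions in $\Dcal_w$, together with the fact that their $k$-th derivatives lie in $(C^\infty_w)^{\otimes k}$ with $w$-controlled growth, should make the resulting convergences routine.
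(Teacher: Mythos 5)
Your treatment of parts \ref{T_optimality:1} and \ref{T_optimality:2} is essentially the paper's (which follows \cite{CLS:18}): a first-order perturbation along curves of the form $\overline\mu+t\eta$ for (i), and a second-order perturbation combined with (i) and an approximation argument for (ii). Your use of the convex combination $(1-s)\overline\mu+s\delta_y$ in (i), rather than $\overline\mu+t(\delta_y-\mu_n)$ with $\mu_n=\overline\mu(\fdot\cap A_n)/\overline\mu(A_n)$ a normalized restriction to a shrinking ball around $x\in\supp(\overline\mu)$, is a harmless variation.

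For parts \ref{T_optimality:3b} and \ref{T_optimality:4} your route is genuinely different and considerably more elaborate than necessary, and it has gaps. The paper does not introduce any Brownian motion or particle system: for each fixed $j$ it pushes $\overline\mu$ forward under the deterministic flow $F^t(y)=z_y(t)$ of the ODE $z'=\tau_j(z)$ (respectively $F^t(x)=x+t\tau_j(x)$ in (iv)), applies the second-order Taylor expansion \eqref{eqn13} to $f(F^t_*\overline\mu)$, uses (i) and \eqref{eqn21} to kill the first-order term, divides by $t^2$, lets $t\to0$, and sums over $j$; the compact-support restriction on $\tau$ is first imposed and then removed by dominated convergence at the very end. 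The key insight you missed is that the second-order information needed for \eqref{eqn16} comes entirely from a second-order Taylor expansion along deterministic curves in $\Pcal_w$, so there is no need for stochastic flows or a propagation-of-chaos argument. Concretely, your proposal contains the following unresolved issues. First, invariance of $E$ under the Stratonovich flow in (iii) does not follow automatically from \eqref{eqn21}: the Nagumo tangency condition is stated for ODEs, and its transfer to the Stratonovich SDE requires a Wong--Zakai or support-theorem argument, whereas the paper only needs the ODE version via \cite{DF:04}. Second, the claim that the assumed $L^1(\overline\mu)$-integrability of $A_\tau$ and $\aleph_\tau$ yields $\overline\mu^W_t\in\Pcal_w$ almost surely is not justified; the paper sidesteps exactly this by first taking $\tau$ compactly supported (so the flow is the identity outside a compact set and $w$-moments are trivially preserved). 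Third, in (iv) with $\tau$ merely continuous there is in general no flow map $\phi_t$, so $(\phi_t)_*\overline\mu$ is not defined; interpreting the SDE only as a martingale problem does not give a pushforward, and the paper's choice $F^t(x)=x+t\tau_j(x)$ avoids the problem entirely. Finally, the $N$-particle It\^o computation and passage to the limit are an unnecessary detour whose technical burden (well-posedness of the limiting measure-valued dynamics, uniform integrability, etc.) you acknowledge but do not discharge.
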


\begin{proof}
 Before we begin, we need a technical result. Fix $f\in \Dcal_w$ and $\varphi_1,\ldots,\varphi_n\in C_c^\infty(\R^d)$ such that $f(\mu)=\Phi(\langle \varphi_1,\mu\rangle,\ldots,\langle \varphi_n,\mu\rangle,\langle w,\mu\rangle)$ for some $\Phi\in C^\infty(\R^{m+1})$. Applying the classical Taylor approximation theorem to $\Phi$ then yields
\begin{equation}\label{eqn13}
f(\mu+\nu_ t)=f(\mu)+\langle \partial f(\mu),\nu_ t\rangle+\frac {1} 2 \langle \partial^2f(\mu),\nu_ t^2\rangle+o( t^2)
\end{equation}
for all $\mu,\nu_t\in M(E)$ such that $w\in L^1(E,|\mu|)\cap L^1(E,|\nu_t|)$,  $\langle w,\nu_t\rangle=O(t)$, and 
$\langle \varphi_i,\nu_t\rangle=O(t)$ for $i=1,\ldots,n$.

\ref{T_optimality:1} and \ref{T_optimality:2}: The proof is a slight modification of the proof of  Theorem~3.1 in \cite{CLS:18}.
Pick any $x\in\supp(\overline \mu)$, $y\in E$, and let $A_n$ be the ball of radius $1/n$ centered at $x$, intersected with $\supp(\overline \mu)$. Note that $\partial f(\overline\mu)\in C_w^\infty$ and that setting $\mu_n:= \overline \mu(\fdot\cap A_n)/\overline \mu(A_n)$ we get that $\overline \mu+ t(\delta_y-\mu_n)\in \Pcal_w$ for all $ t\in (0,\overline \mu(A_n))$ and that $(\mu_n)_{n\in \N}$ converges to $\delta_x$  in $\Pcal_w$. Following the proof of Theorem 3.1 in \cite{CLS:18} yields the result.

\ref{T_optimality:3b}: Assume  that $\tau$ is compactly supported.
We follow the idea of the proof of Proposition~4.1 in \cite{ABI:19}. Fix $x\in \R^d$, $j\in\{1,\ldots, d\}$, and let $z_x:\R_+\to\R^d$ be the solution of the ODE
$$z_x'=\tau_j(z_x)\qquad\text{and}\qquad z_x(0)=x.$$
By Proposition 2.5 in \cite{DF:04} we know that $z_x(t)\in E$ for all $x\in E$ and $t\geq0$. 
Observe that for all $\varphi\in C_w^\infty$ and with $\psi=\varphi\otimes\varphi$ we have that
\begin{align*}
\lim_{t\to 0}\frac {\varphi(z_x(t))-\varphi(x)-(z_x'(0)^\top\nabla \varphi(x))t}{t^2}&=\frac 1 2 \tau_j(x)^\top\nabla(\tau_j^\top\nabla\varphi)(x),\\
\lim_{t\to0}\frac{\psi(x,y)-\psi(z_x(t), y)-\psi(x, z_y(t))+\psi(z_x(t), z_y(t))}{t^2}&=e_j^\top(\tau(x)^\top\nabla\varphi(x)\nabla \varphi(y)^\top\tau(y))e_j.
\end{align*}
Define $\mu_t:=F^{t}_*\overline \mu$, the pushforward of $\overline\mu$ under $F^t$, where $F^{t}(y):=z_y(t)$ for $y\in E$. Clearly $\mu_t\in M_1(E)$, and since $\tau_j$ is compactly supported, $\mu_t\in \Pcal_w$. Moreover, the dominated convergence theorem yields
$\lim_{t\to 0}\frac 1 {t} \langle \varphi,\mu_t-\overline\mu\rangle=\langle \tau_j^\top\nabla\varphi,\overline\mu\rangle$ for all $\varphi\in C_w^\infty$. We thus get that \eqref{eqn13} holds true for $\mu:=\overline \mu$ and $\nu_ t:=\mu_t-\overline \mu$.
Since $\overline \mu$ maximizes $f$ over $\Pcal_w$, we then obtain
\begin{align*}
0&\geq f(\mu_t)-f(\overline \mu)\\
&=\langle \partial f(\overline \mu),\mu_t-\overline \mu\rangle+\frac 1 2 \langle \partial^2 f(\overline \mu),(\mu_t-\overline \mu)^2\rangle+o(t^2)\\
&=\langle \partial_{F^t} f(\overline \mu)-\partial f(\overline \mu),\overline \mu\rangle
+\frac 1 2 \langle \partial^2 f(\overline \mu)-2\partial_{\fdot,{F^t}}^2 f(\overline \mu)+\partial_{{F^t},{F^t}}^2 f(\overline \mu),\overline \mu^2\rangle+o(t^2).
\end{align*}
Recall that by Theorem~\ref{T_optimality}\ref{T_optimality:1} we know that  $\partial_{ y} f(\overline \mu)=\sup_{ E}\partial f(\overline \mu)$
for all $ y\in\supp(\overline \mu)$ and hence ${\tau(y)}^\top \nabla (\partial p(\overline \mu))(y)=0$ by \eqref{eqn21}. As a result, dividing the above expression by $t^2$, letting $t$ go to 0, applying the dominated convergence theorem, and summing over $1\leq j\leq d$ yields \eqref{eqn16}.
A further application of dominated convergence allows to remove the assumption that $\tau$ is compactly supported.


\ref{T_optimality:4}:  The proof follows the proof of \ref{T_optimality:3b} using  $F^{t}(x):=x+t\tau_j(x) $ for all $x\in \R^d$. 
\end{proof}

The property \eqref{eqn21} in Condition~\ref{T_optimality:3b} can be understood as requiring that $\tau$ is a possible diffusion matrix for a (possibly killed) $E$-valued diffusion process. Under slightly more regularity on $\tau$, it is a known result in the stochastic invariance literature that this holds if and only if \eqref{eqn21} is satisfied; see for instance Theorem 4.1 in \cite{DF:04}. The extension to the current generality, where $\tau$ is merely $C^1$, can be proven using Theorem~\ref{T:existence1} and proceeding as in \cite{ABI:19}. We do not elaborate on the details, as we have no need for this result in the present paper.

Condition \ref{T_optimality:3b} is in fact an application of a more general condition, which we report here for the case $w\equiv 1$. See Theorem~3.4 and Remark~5.7(i) in \cite{CLS:18} for more details and a proof. An analogous result can be proved for $w$ as in \eqref{eq_wtoinfty}.
\begin{lemma}\label{T_optimality:3} Fix $w\equiv 1$, $f\in\Dcal_w$, and $\overline\mu\in\Pcal_w$ such that $f(\overline\mu)=\max_{\Pcal_w}f$.
Let $A$ be the generator of a strongly continuous group of positive isometries of $\R+C_0(E)$, and assume the domain of $A$ and the domain of $A^2$ both contain $C_w^\infty$.
 Then
\[
\langle A^2(\partial f(\overline\mu)),\overline\mu\rangle + \langle (A\otimes A)(\partial^2 f(\overline\mu)),\overline\mu^2\rangle \le 0.
\]
\end{lemma}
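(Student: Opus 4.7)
The approach is to adapt the proof of Theorem~\ref{T_optimality}\ref{T_optimality:3b}, replacing the ODE flow by the group $(T_t)_{t\in\R}$ generated by $A$. First I would construct a one-parameter family $(\mu_t)_{t\in\R}\subset M_1(E)=\Pcal_w$ with $\mu_0=\overline\mu$ by duality:
\[
\langle\varphi,\mu_t\rangle=\langle T_t\varphi,\overline\mu\rangle,\qquad\varphi\in\R+C_0(E).
\]
Because $T_t$ is a \emph{positive isometry} of $\R+C_0(E)\cong C(E^\Delta)$, the Banach--Stone theorem identifies it with a pullback along a homeomorphism of $E^\Delta$, so $T_t 1=1$ and $\mu_t$ is a probability measure on $E$. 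The group property gives $\mu_t\in\Pcal_w$ for every $t\in\R$, both positive and negative---this two-sided perturbation is what will deliver both first- and second-order optimality.

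Next I would Taylor-expand along this curve. Since $C_w^\infty\subseteq\mathrm{Dom}(A^2)$, every $\varphi\in C_w^\infty$ satisfies $T_t\varphi=\varphi+tA\varphi+\tfrac{t^2}{2}A^2\varphi+o(t^2)$ in the sup norm. Pairing with $\overline\mu$ and expanding the product $\langle T_t\varphi_1,\overline\mu\rangle\langle T_t\varphi_2,\overline\mu\rangle$ give
\begin{align*}
\langle\varphi,\mu_t-\overline\mu\rangle&=t\langle A\varphi,\overline\mu\rangle+\tfrac{t^2}{2}\langle A^2\varphi,\overline\mu\rangle+o(t^2), \\
\langle\varphi_1\otimes\varphi_2,(\mu_t-\overline\mu)^{\otimes 2}\rangle&=t^2\langle(A\otimes A)(\varphi_1\otimes\varphi_2),\overline\mu^2\rangle+o(t^2),
\end{align*}
the second identity extending by linearity to all of $(C_w^\infty)^{\otimes 2}$. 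Since $\langle 1,\mu_t-\overline\mu\rangle=0$ (as $T_t1=1$) and the test-function pairings are $O(t)$, the Taylor formula \eqref{eqn13} applies to $f$ with increment $\mu_t-\overline\mu$, and assembling the pieces yields
\[
f(\mu_t)-f(\overline\mu)=t\langle A(\partial f(\overline\mu)),\overline\mu\rangle+\tfrac{t^2}{2}\bigl[\langle A^2(\partial f(\overline\mu)),\overline\mu\rangle+\langle(A\otimes A)(\partial^2 f(\overline\mu)),\overline\mu^2\rangle\bigr]+o(t^2).
\]

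To conclude, I would observe that $t\mapsto f(\mu_t)$ is a $C^2$ real-valued function attaining its maximum at $t=0$, so the first derivative must vanish (giving $\langle A(\partial f(\overline\mu)),\overline\mu\rangle=0$, the natural analog of Theorem~\ref{T_optimality}\ref{T_optimality:1}) and the second derivative must be non-positive, which is precisely the asserted inequality.

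The main obstacle I anticipate is the soft-analytic step of justifying that $\mu_t\in\Pcal_w$ for every $t\in\R$: one must leverage the positivity, isometry, and group properties of $T_t$ on the unital space $\R+C_0(E)$ to rule out the underlying flow moving mass to or from the point at infinity $\Delta$. Once this is in place, everything else reduces to the two-term Banach-space Taylor expansion for strongly continuous groups combined with the first- and second-order necessary conditions for a smooth maximum, running in complete parallel to the ODE argument in \ref{T_optimality:3b}.
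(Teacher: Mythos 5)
The paper itself offers no proof of this lemma; it defers to Theorem~3.4 and Remark~5.7(i) of \cite{CLS:18}. Your argument is in spirit exactly that of the cited reference and, apart from one point, is correct.

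The one point you flag --- establishing $\mu_t\in\Pcal_w$ --- is indeed the delicate one, but the route you sketch (leveraging positivity/isometry/group structure to rule out mass crossing $\Delta$) cannot work as stated when $E$ is noncompact. Banach--Stone plus positivity plus $T_t1=1$ give $T_t\varphi=\varphi\circ\phi_t$ for a homeomorphism $\phi_t$ of $E^\Delta$, but nothing in the hypotheses forces $\phi_t(\Delta)=\Delta$: a positive isometry group of $\R+C_0(E)\cong C(E^\Delta)$ is free to move $\Delta$. For instance, with $E=\R$ and $E^\Delta\cong S^1$, the rotation group is a strongly continuous group of positive unital isometries whose orbit of $\Delta$ is the whole circle; for $\overline\mu$ a Dirac mass, $\mu_t=(\phi_t)_*\overline\mu$ charges $\Delta$ for suitable $t$. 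So the step ``$\mu_t$ is a probability measure on $E$'' is genuinely false in general, and no soft argument of the type you propose will repair it.

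The repair is to enlarge the state space rather than to constrain the flow. Your $\mu_t=(\phi_t)_*\overline\mu$ lies in $M_1(E^\Delta)$ for every $t$, automatically. Since $w\equiv1$ and the generating test functions $\varphi$ lie in $\R+C_c^\infty(\R^d)\subset C(E^\Delta)$, every $f\in\Dcal_w$ extends to a weak-$*$ continuous function on the compact set $M_1(E^\Delta)$. Because $M_1(E)$ is weak-$*$ dense in $M_1(E^\Delta)$ (approximate $\delta_\Delta$ by $\delta_{x_n}$, $x_n\to\Delta$), the hypothesis $f(\overline\mu)=\max_{M_1(E)}f$ upgrades to $f(\overline\mu)=\max_{M_1(E^\Delta)}f$. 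The Taylor expansion \eqref{eqn13} applies verbatim to increments $\nu_t=\mu_t-\overline\mu\in M(E^\Delta)$ once one adopts the conventions $\varphi(\Delta)=0$, $w(\Delta)=1$; the requirements $\langle w,\nu_t\rangle=O(t)$ (here $=0$ since $T_t1=1$) and $\langle\varphi_i,\nu_t\rangle=O(t)$ hold exactly as you computed. The remainder of your argument --- the two-term expansion of $T_t$, the polarization to $(A\otimes A)$, and the first- and second-order necessary conditions at the interior maximum $t=0$ of $t\mapsto f(\mu_t)$ on $\R$ --- then goes through unchanged and yields both $\langle A(\partial f(\overline\mu)),\overline\mu\rangle=0$ and the asserted inequality. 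With this modification your proof is complete.
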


\section{Verifying the technical conditions}\label{S_verify_tech}

We now turn to the technical conditions \ref{T_Pw_2_new}--\ref{T_Pw_6} in Theorem~\ref{T_Pw}. At the end of the section, we follow up on Remark~\ref{rem2} and consider the case $w\equiv 1$. Recall the embedding $T\colon\Pcal_w\to M_+(E^\Delta)$ defined in \eqref{eq_T_embedding}.

We now start to work towards concrete ways of checking these assumptions.

\begin{lemma}\label{lem2}
Suppose $L$ is of L\'evy type \eqref{eq_Levy_type}. Assume that the functions
\[
\mu\mapsto  \kappa_\mu  e^{-\langle w,\mu\rangle}, \qquad \mu\mapsto \langle B_\mu(\varphi),\mu\rangle e^{-\langle w,\mu\rangle}, \qquad \mu\mapsto \langle Q_\mu(\varphi\otimes\varphi),\mu^2\rangle e^{-\langle w,\mu\rangle},
\]
\[
\mu\mapsto e^{-\langle w,\mu\rangle} \int_{\Pcal_w} \left(\langle\varphi,\nu\rangle e^{-\langle w,\nu-\mu\rangle} - \langle\varphi,\mu\rangle - \langle \varphi-w\langle\varphi,\mu\rangle,\chi(\nu-\mu)\rangle\right) N(\mu,d\nu),
\]
and
\[
\mu\mapsto e^{-k\langle w,\mu\rangle} \int_{\Pcal_w} \left( \langle\varphi,\nu\rangle e^{-\langle w,\nu-\mu\rangle} - \langle\varphi,\mu\rangle \right)^k N(\mu,d\nu), \qquad k\geq2,
\]
are of $C_0$ type for every $\varphi\in C^\infty_w$. Then so is $Lf$ for every $f\in\Dcal_w$, that is, condition \ref{T_Pw_2_new} in Theorem~\ref{T_Pw} is satisfied.
\end{lemma}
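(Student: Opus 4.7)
The plan is to use linearity of $L$ to reduce to the case where $f$ is a single generator $f(\mu)=\prod_{i=1}^k\langle\varphi_i,\mu\rangle e^{-\langle w,\mu\rangle}=P(\mu)e^{-k\langle w,\mu\rangle}$ with $\varphi_i\in C^\infty_c(\R^d)$ and $P(\mu):=\prod_i\langle\varphi_i,\mu\rangle$. The decisive observation is that on the state space $\Xcal$ every $\nu$ satisfies $\langle w^{-1},\nu\rangle=1$, so for any $\varphi\in C^\infty_c(\R^d)$ the pull-back $\nu\mapsto\langle\varphi,T^{-1}(\nu)\rangle=\langle\varphi w^{-1},\nu\rangle$ is continuous on $\Xcal$ and uniformly bounded by $\|\varphi\|_\infty$; likewise $\nu\mapsto e^{-k\langle w,T^{-1}(\nu)\rangle}=e^{-k\langle 1,\nu\rangle}$ is bounded continuous on $\Xcal$. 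Since the class of $C_0$-type functions is closed under multiplication by bounded continuous functions on $\Xcal$, any such factor may be absorbed ``for free''.

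First I would compute, using the product rule,
\[
\partial f(\mu)=e^{-k\langle w,\mu\rangle}\Bigl(\textstyle\sum_jP_j(\mu)\varphi_j-kP(\mu)w\Bigr),
\]
where $P_j(\mu):=\prod_{i\ne j}\langle\varphi_i,\mu\rangle$, and an analogous bilinear expression for $\partial^2 f(\mu)$ whose arguments lie in $C^\infty_w$ and whose coefficients are polynomials in $\langle\varphi_i,\mu\rangle$, hence bounded continuous after pull-back to $\Xcal$. The killing term $-\kappa_\mu f(\mu)=-[\kappa_\mu e^{-\langle w,\mu\rangle}]\cdot P(\mu)e^{-(k-1)\langle w,\mu\rangle}$ is then $C_0$ because the bracket is $C_0$ by hypothesis and the remaining factor is bounded continuous. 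The drift $\langle B_\mu(\partial f(\mu)),\mu\rangle$ expands, by linearity of $B_\mu$, into a finite sum of products of bounded continuous factors and the hypothesized $C_0$ quantities $\mu\mapsto\langle B_\mu(\chi),\mu\rangle e^{-\langle w,\mu\rangle}$ with $\chi\in\{\varphi_j,w\}\subset C^\infty_w$. The diffusion is treated analogously after grouping the symmetric tensor combinations via $\chi\otimes\chi'+\chi'\otimes\chi=(\chi+\chi')^{\otimes 2}-\chi^{\otimes 2}-{\chi'}^{\otimes 2}$, which expresses each off-diagonal pairing $\langle Q_\mu(\chi\otimes\chi'+\chi'\otimes\chi),\mu^2\rangle$ as a linear combination of the diagonal hypothesized functions $\mu\mapsto\langle Q_\mu(\chi^{\otimes 2}),\mu^2\rangle e^{-\langle w,\mu\rangle}$.

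The jump part is the most delicate. Setting $a_i:=\langle\varphi_i,\nu\rangle e^{-\langle w,\nu-\mu\rangle}$ and $b_i:=\langle\varphi_i,\mu\rangle$, I would write
\[
f(\nu)-f(\mu)=e^{-k\langle w,\mu\rangle}\sum_{\emptyset\ne S\subseteq\{1,\dots,k\}}\prod_{i\in S}(a_i-b_i)\prod_{i\notin S}b_i
\]
and combine the singleton contributions ($|S|=1$) with $\langle\partial f(\mu),\chi(\nu-\mu)\rangle$; a direct rearrangement shows that the combined integrand against $N(\mu,d\nu)$ equals $\sum_j P_j(\mu)\bigl[\langle\varphi_j,\nu\rangle e^{-\langle w,\nu-\mu\rangle}-\langle\varphi_j,\mu\rangle-\langle\varphi_j-w\langle\varphi_j,\mu\rangle,\chi(\nu-\mu)\rangle\bigr]$, which is exactly the first hypothesized jump integrand (with $\varphi=\varphi_j$) times the bounded coefficient $P_j(\mu)$, multiplied by the bounded factor $e^{-(k-1)\langle w,\mu\rangle}$. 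For $|S|=\ell\ge 2$, the polarization identity
\[
\ell!\prod_{i\in S}(a_i-b_i)=\sum_{S'\subseteq S}(-1)^{\ell-|S'|}\bigl(a_{\varphi_{S'}}-b_{\varphi_{S'}}\bigr)^\ell,\qquad \varphi_{S'}:=\textstyle\sum_{i\in S'}\varphi_i\in C^\infty_w,
\]
reduces $e^{-\ell\langle w,\mu\rangle}\int\prod_{i\in S}(a_i-b_i)\,N(\mu,d\nu)$ to a linear combination of the hypothesized $k$-th power integrand with $k=\ell$; the remaining prefactor $e^{-(k-\ell)\langle w,\mu\rangle}\prod_{i\notin S}b_i$ is again bounded continuous.

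I expect the main obstacle to be purely combinatorial bookkeeping rather than any conceptual difficulty. The crucial quantitative input at every step is that $\varphi_i\in C^\infty_c$ (not merely $C^\infty_w$), so that each $\langle\varphi_i,\mu\rangle$ pulls back to a \emph{uniformly bounded} function on $\Xcal$; without this, polynomial growth in $\langle 1,\nu\rangle$ would destroy the $C_0$ decay supplied by the hypothesized factors. The two polarization identities---the diagonal reduction for $Q_\mu$ and the multilinear reduction for $\prod_{i\in S}(a_i-b_i)$---are what make the finite list of hypotheses sufficient to control the arbitrarily complex products appearing in $Lf$ for general $f\in\Dcal_w$.
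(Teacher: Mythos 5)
Your proof is correct and follows essentially the same strategy as the paper: reduce to a monomial by linearity, absorb the uniformly bounded pulled-back factors $\langle\varphi_i w^{-1},\nu\rangle$ and $e^{-(k-1)\langle 1,\nu\rangle}$, treat each of the four terms of $L$ separately, and use polarization to reduce off-diagonal tensors (diffusion) and products of increments (jumps) to the diagonal and pure-power quantities listed in the hypotheses. The only cosmetic difference is in the jump term, where the paper Taylor-expands a univariate polynomial $p$ around $f(\mu)$ for a single generator $f$ and then invokes polarization for the general case, whereas you carry out the multilinear expansion of $\prod a_i-\prod b_i$ directly; the two computations are equivalent.
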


\begin{proof}
Note that one can check that each term of $Lf$ in \eqref{eq_Levy_type} is of $C_0$ type separately. Recall that $\mu\mapsto e^{-\langle w,\mu\rangle}$ is of $C_0$ type, as are all the elements of $\Dcal_w$.

{\it Killing:} The result for $\mu\mapsto\kappa_\mu f(\mu)$ follows by noting that $f(\mu)=\langle \varphi,\mu\rangle e^{-\langle w,\mu\rangle} g(\mu)$ for some $ g$ constant or in  $\Dcal_w$, for some  $\varphi\in C_c^\infty(\R^d)$, and for all $\mu\in \Pcal_w$.
 
{\it Drift:} Observe that for
\begin{equation}\label{eqn17}
\text{$f(\mu)=\langle\varphi,\mu\rangle e^{-\langle w,\mu\rangle}$ with $\varphi\in C^\infty_c(\R^d)$}
\end{equation}
we have
$\partial f(\mu) = (\varphi  - \langle \varphi,\mu\rangle w )e^{-\langle w,\mu\rangle}$. One then sees that the given conditions ensure that $\mu\mapsto\langle B_\mu(\partial f(\mu)), \mu\rangle$ is of $C_0$ type. By the product rule and linearity in $f$, this result extends to each $f\in \Dcal_w$.

{\it Diffusion:} For $f$ as in \eqref{eqn17} we have $\partial^2f(\mu) = (\langle\varphi,\mu\rangle w\otimes w - 2w\otimes \varphi )e^{-\langle w,\mu\rangle}$. The polarization identity $w\otimes \varphi = \frac14((w+\varphi)\otimes(w+\varphi) - (w-\varphi)\otimes(w-\varphi))$ and the given conditions yield that $\mu\mapsto\langle Q_\mu(\partial^2 f(\mu)), \mu\rangle$ and $\mu\mapsto\langle Q_\mu(\partial f(\mu)\otimes \partial f(\mu)), \mu\rangle$ are of $C_0$ type.  Applying the product rule twice we get $\partial^2(fg)=f\partial^2 g +2\partial g\otimes\partial f+ g\partial^2 f$ for each $f,g\in \Dcal_w$.
Combined with linearity in $f$ this yields that $\mu\mapsto\langle Q_\mu(\partial^2 f(\mu)), \mu\rangle$ is of $C_0$ type for each $f\in \Dcal_w$.

{\it Jumps:} Finally, consider $g(\mu):=p(f(\mu))$ for some polynomial $p:\R\to\R$ and some $f$ as in \eqref{eqn17}. Since $p$ is a polynomial and 
$\langle\partial g(\mu),\chi(\nu-\mu)\rangle=p'(f(\mu))\langle \partial f(\mu),\chi(\nu-\mu)\rangle$, an application of the classical Taylor approximation theorem to $p$ yields
$$
\begin{aligned}
&g(\nu)-g(\mu)-\langle\partial g(\mu),\chi(\nu-\mu)\rangle\\
&\qquad=p'(f(\mu))\Big(f(\nu)-f(\mu)-\langle \partial f(\mu),\chi(\nu-\mu)\rangle\Big)
+\sum_{\ell=2}^k\frac{p^{(\ell)}(f(\mu))}{\ell!}(f(\nu)-f(\mu))^\ell,
\end{aligned}
$$
where $k$ denotes the degree of $p$. Thus the given conditions ensure that the last term of $Lg$ in \eqref{eq_Levy_type} is of $C_0$ type. By polarization, this is also true for all $f\in \Dcal_w$.
\end{proof}

We focus now on condition \ref{T_Pw_3_new} in Theorem~\ref{T_Pw}, assuming that \ref{T_Pw_1}--\ref{T_Pw_2_new} are satisfied. This ensures that the operator $\widetilde L\colon\Dcal\to C_0(\Xcal)$ in \eqref{eq_T_Dw}--\eqref{L_tilde} is well-defined.

To verify these conditions it is useful to first extend $L$ to a larger class of functions than $\Dcal_w$, and then search for appropriate sequences $\{f_m\}_{m\in\N}$ in this larger class. The precise notion of extension is given in the following definition.

For any compact subset $\Kcal\subset\Xcal$ we define the following restricted graph of $\widetilde L$:
\[
\gph_\Kcal(\widetilde L) = \{(\widetilde f,\widetilde L\widetilde f|_\Kcal) \colon \widetilde f\in\Dcal\}\subset C_0(\Xcal)\times C(\Kcal).
\]
\begin{definition}
We say that $\widetilde L$ can be extended to a function $\widetilde f\colon\Xcal\to\R$ if there is another function $\widetilde g\colon\Xcal\to\R$ such that $(\widetilde f,\widetilde g|_\Kcal)$ lies in the bp-closure of $\gph_\Kcal(\widetilde L)$ for every compact subset $\Kcal\subset\Xcal$. We say that $L$ can be extended to a function $f\colon\Pcal_w\to\R$ if $\widetilde L$ can be extended to a function $\widetilde f\colon\Xcal\to\R$ that satisfies $f=\widetilde f\circ T$.
\end{definition}

If $f=\widetilde f \circ T$, and if $(\widetilde f,\widetilde g|_\Kcal)$ lies in the bp-closure of $\gph_\Kcal(\widetilde L)$ for every compact subset $\Kcal\subset\Xcal$, we write $Lf$ for $\widetilde g\circ T$. Sometimes, it happens that the expression given in \eqref{eq_Levy_type} is well defined for  $f$ and  coincides with $\widetilde g\circ T$.
 Those cases will be particularly important for our purposes.

\begin{lemma}\label{lem3}
Suppose $L$ is of L\'evy type \eqref{eq_Levy_type} and satisfies conditions \ref{T_Pw_1}--\ref{T_Pw_2_new} of Theorem~\ref{T_Pw}. Assume $L$ can be extended to all functions $f$ in the algebra generated by $\Dcal_w$ and $e^{-\langle w,\mu\rangle}$, and that $Lf$ is given by \eqref{eq_Levy_type}. Assume also there exist $[0,1]$-valued functions $\psi_m\in C^\infty_c(\R^d)$ with the following properties:
\begin{itemize}
\item $\psi_m\to1$ pointwise.
\item The functions $h_m\colon\Pcal_w\to\R$ given by $h_m(\mu)=\langle B_\mu((1-\psi_m)w),\mu\rangle^+$, which are of $C_0$ type, satisfy $\limsup_{m\to\infty}h_m\circ T^{-1}\leq c'\langle 1_{\{\Delta\}},\fdot\rangle$ in the bounded pointwise sense on every compact subset of $\Xcal$, for some constant $c'$.
\end{itemize}
Then condition \ref{T_Pw_3_new} in Theorem~\ref{T_Pw} is satisfied.
\end{lemma}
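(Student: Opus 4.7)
I propose the candidates
\[
\widetilde f(\nu) = \nu(\{\Delta\})\,e^{-\langle 1,\nu\rangle},\qquad \widetilde f_m(\nu) = \langle 1-\psi_m,\nu\rangle\,e^{-\langle 1,\nu\rangle}
\]
on $\Xcal$, so that their $T$-pullbacks are $f_m(\mu):=\widetilde f_m\circ T(\mu)=\langle(1-\psi_m)w,\mu\rangle\,e^{-\langle w,\mu\rangle}$, with formal limit $\widetilde f$ detecting mass at $\Delta$ through an exponential damping. Writing $(1-\psi_m)w=w-\psi_m w$ with $\psi_m w\in C_c^\infty(\R^d)$, and approximating the remaining $w$-piece by $\rho_n w\in C_c^\infty$ with $\rho_n\uparrow 1$, one sees that $f_m$ lies in the bp-closure of the algebra generated by $\Dcal_w$ and $e^{-\langle w,\mu\rangle}$, so that $Lf_m$ is well defined via the L\'evy-type formula \eqref{eq_Levy_type} by the extension hypothesis.

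For condition~\ref{cond_a_new}, the convergence $\widetilde f_m\to\widetilde f$ follows from $1-\psi_m\to 1_{\{\Delta\}}$ on $E^\Delta$ (using $\psi_m(\Delta)=0$) by bounded convergence. Non-negativity is clear, and on $\Xcal_c$ the factor $e^{-\langle 1,\nu\rangle}\ge e^{-c}>0$ yields $\widetilde f(\nu)=0\iff\nu(\{\Delta\})=0$. Uniform boundedness of $\widetilde f_m$ by $e^{-1}$ comes from $xe^{-x}\le e^{-1}$. The bp-closure requirement is handled by the $\Dcal_w$-approximations $f_m^n(\mu):=\langle(1-\psi_m)w\rho_n,\mu\rangle e^{-\langle w,\mu\rangle}$: their pullbacks lie in $C_0(\Xcal)$ and bp-converge to $\widetilde f_m$, and by the extension hypothesis the associated $Lf_m^n\circ T^{-1}|_{\Xcal_c}$ bp-converge to a function $\widetilde g_m$ on $\Xcal_c$ agreeing with $Lf_m\circ T^{-1}$ on $T(\Pcal_w)$, so $(\widetilde f_m,\widetilde g_m)$ lies in the required bp-closure.

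The main work lies in~\ref{cond_b_new}. Setting $\phi_m:=(1-\psi_m)w$ and using the product rule
\[
\partial^2 f_m = e^{-\langle w,\mu\rangle}\bigl[\langle\phi_m,\mu\rangle\, w\otimes w - w\otimes\phi_m - \phi_m\otimes w\bigr],
\]
the L\'evy-type formula decomposes $Lf_m$ into four pieces. The killing piece $-\kappa_\mu f_m(\mu)$ is non-positive. The drift piece equals $e^{-\langle w,\mu\rangle}\langle B_\mu(\phi_m),\mu\rangle - f_m(\mu)\langle B_\mu(w),\mu\rangle$; the positive part of the first summand is precisely $e^{-\langle w,\mu\rangle}h_m(\mu)$, and by the hypothesis on $h_m$ its $T^{-1}$-pullback satisfies $\limsup_m \le c'\langle 1_{\{\Delta\}},\fdot\rangle\cdot e^{-\langle 1,\fdot\rangle}\le c'\widetilde f$ on $\Xcal_c$. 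The second summand vanishes in the limit because $\phi_m\to 0$ pointwise on $E$ forces $f_m\to 0$ pointwise on $\Pcal_w$, while $\langle B_\mu(w),\mu\rangle$ is bounded on the compact set $\Xcal_c$ by the $C_0$-type hypotheses of Lemma~\ref{lem2}. The diffusion piece is a sum of terms each carrying a factor $\phi_m$ or $\langle\phi_m,\mu\rangle$ that vanishes in the $m\to\infty$ limit by dominated convergence, and the jump integrand $f_m(\nu)-f_m(\mu)-\langle\partial f_m(\mu),\chi(\nu-\mu)\rangle$ vanishes pointwise while being uniformly dominated via the $1\wedge\langle\varphi,\nu-\mu\rangle^2$-integrability of $N(\mu,\cdot)$ combined with $|f_m|\le e^{-1}$, so dominated convergence kills that integral as well.

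The hard part will be propagating these pointwise estimates from $T(\Pcal_w)$ to the full compact $\Xcal_c$ through the bp-closure, since on $\Xcal_c\setminus T(\Pcal_w)$ the function $\widetilde g_m$ is defined only implicitly by the extension. The resolution is that each constituent term of $Lf_m$ is of $C_0$ type by Lemma~\ref{lem2}, hence admits a \emph{unique} continuous extension to $\Xcal$, and the crucial hypothesis on $h_m$ is already formulated on $\Xcal$; bounded-pointwise passage to the limit through the bp-closure then yields $\limsup_m\widetilde g_m^+\le c'\widetilde f|_{\Xcal_c}$, completing the verification of condition~\ref{T_Pw_3_new} of Theorem~\ref{T_Pw}.
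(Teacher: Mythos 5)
Your proposal takes a genuinely different route from the paper, but as written it has a gap in the verification of condition~\ref{cond_b_new}.

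\textbf{What differs.} The paper sets $f_m(\mu)=p_c(\langle w,\mu\rangle)\langle(1-\psi_m)w,\mu\rangle$, where $p_c\in C^2(\R_+)$ satisfies $1_{\{x\le c\}}\le p_c\le 1_{\{x\le 2c\}}$, and $\widetilde f(\nu)=p_c(\langle1,\nu\rangle)\nu(\{\Delta\})$. The crucial point is that $p_c$ is \emph{constant} on $\{x\le c\}$, so for $\mu$ with $\langle w,\mu\rangle\le c$ (i.e.\ on $T(\Pcal_w)\cap\Xcal_c$) one gets $\partial f_m(\mu)=(1-\psi_m)w$ and $\partial^2 f_m(\mu)=0$. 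The diffusion term drops out, the jump integrand becomes nonpositive, and one reads off directly $Lf_m(\mu)\le\langle B_\mu((1-\psi_m)w),\mu\rangle$, whence $\widetilde g_m^+\le h_m\circ T^{-1}|_{\Xcal_c}$ with no further work. Your choice $f_m(\mu)=\langle(1-\psi_m)w,\mu\rangle\, e^{-\langle w,\mu\rangle}$ keeps you inside the extended algebra without passing through a $C^2$ approximation, which is aesthetically appealing, but then $\partial^2 f_m\neq 0$ and $\partial f_m$ carries the extra $(\phi_m-\langle\phi_m,\mu\rangle w)e^{-\langle w,\mu\rangle}$ structure, so the killing, diffusion, jump, and the $-f_m(\mu)\langle B_\mu(w),\mu\rangle$ corrections all survive and have to be estimated.

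\textbf{Where the gap is.} You assert that these extra terms ``vanish in the limit by dominated convergence'' because $\phi_m\to 0$ pointwise on $E$. This is correct only on $T(\Pcal_w)\cap\Xcal_c$, where the limit measure assigns no mass to $\Delta$. But condition~\ref{cond_b_new} must hold on all of $\Xcal_c$, and on $\Xcal_c\setminus T(\Pcal_w)$ the pullback of, say, $\langle\phi_m,\mu\rangle$ is $\langle 1-\psi_m,\nu\rangle\to\nu(\{\Delta\})>0$, not $0$. Thus each of the extra terms converges (through its continuous $C_0$-type extension) to a multiple of $\nu(\{\Delta\})$ rather than to zero. Your final paragraph gestures at ``propagating pointwise estimates to $\Xcal_c$'' but does not resolve this: the issue is not uniqueness of the extension, it is that the extended limit is nonzero. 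The argument can probably be patched -- one would bound the limit of each extra term by $(\text{const})\cdot\nu(\{\Delta\})$ using boundedness of the $C_0$ extensions on the compact set $\Xcal_c$, then convert $\nu(\{\Delta\})\le e^{c}\widetilde f(\nu)$ since $e^{-\langle1,\nu\rangle}\ge e^{-c}$ on $\Xcal_c$, and finally check uniform boundedness of $\widetilde g_m^+$ term by term -- but none of this is carried out, and the jump term in particular requires a more delicate decomposition by linearity in $\varphi$ than the one-line dominated-convergence appeal you give. The paper's choice of $p_c$ is designed precisely to make all of this unnecessary.
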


\begin{proof}
We claim that 
$L$ can be extended to all maps $f:\Pcal_w\to\R$ of the form
\begin{equation}\label{eqn18}
f(\mu):= p\big(\langle \varphi_1,\mu\rangle e^{-\langle w,\mu\rangle},\ldots,\langle \varphi_n,\mu\rangle e^{-\langle w,\mu\rangle}\big)
\end{equation}
for some $p\in C^2(\R^n)$ with $p(0)=0$ and $\varphi_1,\ldots,\varphi_n\in \R+C^\infty_c(\R^d)$, and that $Lf$ is given by \eqref{eq_Levy_type}.
To see this, define functions
\[
f_m(\mu):=p_m\big(\langle \varphi_1,\mu\rangle e^{-\langle w,\mu\rangle},\ldots,\langle \varphi_n,\mu\rangle e^{-\langle w,\mu\rangle}\big)
\]
for some polynomials $p_m$ on $\R^n$ with $p_m(0)=0$ such that
$p_m\to p$, $\nabla p_m\to \nabla p$, and $\nabla^2p_m\to\nabla^2p$ uniformly on $[-R,R]^n$, where $R=\max_{i=1,\ldots,n}\sup_{\mu\in\Pcal_w}|\langle \varphi_i,\mu\rangle| e^{-\langle w,\mu\rangle}$. Then $f_m\to f$ and $Lf_m \to Lf$ uniformly on $\Pcal_w$, where $Lf$ is defined by \eqref{eq_Levy_type}. Therefore the functions $\widetilde f_m=f_m\circ T^{-1}$ and $\widetilde L\widetilde f_m = (Lf_m)\circ T^{-1}$ converge in the bounded pointwise sense (even uniformly) to $\widetilde f=f\circ T^{-1}$ and $Lf\circ T^{-1}$. This shows that $L$ can be extended to $f$, and that $Lf$ is given by \eqref{eq_Levy_type}.

Fix any $c\ge1$. Let $p_c\in C^2(\R_+)$ be such that
$1_{\{x\leq c\}}\leq p_c(x)\leq1_{\{x\leq2c\}}$, and 
set
$$f_m(\mu):=p_c(\langle w,\mu\rangle) \langle (1-\psi_m)w,\mu\rangle.$$
The function $f_m$ is of the form \eqref{eqn18}. Indeed, we can write $f_m$ as
$$
f_m(\mu)= p_1\big(\langle1,\mu\rangle e^{-\langle w,\mu\rangle}\big)- p_2\big(\langle 1,\mu\rangle e^{-\langle w,\mu\rangle}\big) \langle \psi_mw,\mu\rangle e^{-\langle w,\mu\rangle},
$$
where $p_1(x)=p_c(-\log(x))(-\log(x))$ and $p_2(x)=p_c(-\log(x))/x$ for $x>0$, and $p_1(x)=p_2(x)=0$ for $x\le0$.

We now define $\widetilde f_m=f_m\circ T^{-1}$, $\widetilde g_m=Lf_m\circ T^{-1} |_{\Xcal_c}$, and $\widetilde f(\nu)=p_c(\langle1,\nu\rangle)\nu(\{\Delta\})$, and prove that these functions satisfy the properties in Theorem~\ref{T_Pw}\ref{T_Pw_3_new}. Since $L$ can be extended to $f_m$, the pair $(\widetilde f_m,\widetilde g_m)$ lies in the bp-closure of $\gph_{\Xcal_c}(\widetilde L)$. Moreover, the $\widetilde f_m$ are uniformly bounded in $m$ because $0\le \widetilde f_m(\nu)=p_c(\langle 1,\nu\rangle) \langle (1-\psi_m),\nu\rangle\le 2c$. Next, for all $\mu\in \Pcal_w$ such that $\langle w,\mu\rangle\le c$, we have $f_m(\mu)= \langle (1-\psi_m)w,\mu\rangle$, $\partial f_m(\mu)=(1-\psi_m)w$, and $\partial^2f_m(\mu)=0$. Since also $\kappa_\mu$ is nonnegative we have
\begin{align*}
Lf_m(\mu) &\le \langle B_\mu((1-\psi_m)w),\mu\rangle + \int_{\Pcal_w} \big(p_c(\langle w,\nu\rangle)-1\big) \langle (1-\psi_m)w,\nu\rangle
N(\mu,d\nu) \\
&\le \langle B_\mu((1-\psi_m)w),\mu\rangle.
\end{align*}
Therefore $\widetilde g_m^+ \le h_m\circ T^{-1}|_{\Xcal_c}$, and our hypotheses imply that $\widetilde g_m^+$ is uniformly bounded in $m$, and that $\limsup_{m\to\infty} \widetilde g_m^+ \le c' \widetilde f|_{\Xcal_c}$ pointwise.  The dominated convergence theorem implies that $\widetilde f_m\to\widetilde f$ pointwise. Finally, it follows by inspection that $\widetilde f\ge0$, and that $\widetilde f(\nu)=0$ for $\nu\in\Xcal_c$ if and only if $\nu(\{\Delta\})=0$.
\end{proof}

The last condition to analyze is condition \ref{T_Pw_6}. 

\begin{lemma}\label{lem4}
Suppose $L$ is of L\'evy type \eqref{eq_Levy_type} and satisfies the conditions \ref{T_Pw_1}--\ref{T_Pw_2_new} of Theorem~\ref{T_Pw}.  Assume that for every function $f$ in the algebra generated by $\Dcal_w$ and $e^{-\langle w,\mu\rangle}$, $(f,Lf)$ with $Lf$ is given by \eqref{eq_Levy_type} lies in the bp-closure of the graph $\{(h,Lh)\colon h\in\Dcal_w\}$.
Assume also that $\kappa_\mu=0$ for all $\mu\in \Pcal_w$, and that we have the linear growth condition
\[
\langle w\otimes B_\mu(w),\mu^2\rangle^+ + \langle Q_\mu(w\otimes w),\mu^2\rangle + \int_{\Pcal_w}\langle w,\nu-\mu\rangle^2\land \langle w,\mu\rangle^2 \Nu(\mu,d\nu) \le c\langle w,\mu\rangle^2
\]
for all $\mu\in\Pcal_w$ and some constant $c$. Then condition \ref{T_Pw_6} in Theorem~\ref{T_Pw} is satisfied.
\end{lemma}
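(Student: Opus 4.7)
The plan is to verify condition~\ref{T_Pw_6} by exhibiting pairs $(f_n, Lf_n)$ in the bp-closure of the graph $\{(h, Lh) : h \in \Dcal_w\}$ with $f_n \to 1$ and $(Lf_n)^- \to 0$ in the bounded pointwise sense. By the extension hypothesis of the lemma, I may work with $f_n$ in the algebra generated by $\Dcal_w$ and $e^{-\langle w,\mu\rangle}$, where $Lf_n$ is given by the L\'evy formula~\eqref{eq_Levy_type}; a further bp-approximation along the lines of Lemma~\ref{lem3} allows smooth compositions of such functions.

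A natural ansatz is $f_n(\mu) = \phi_n(\langle w,\mu\rangle)$ for a smooth $\phi_n: [1,\infty) \to [0,1]$ with $\phi_n \to 1$ pointwise; the canonical choice is $\phi_n(a) = e^{-a/n}$. Setting $a = \langle w,\mu\rangle$, $b = \langle w,\nu\rangle$, $\Delta = b - a$, and using $\partial f_n(\mu) = \phi_n'(a) w$, $\partial^2 f_n(\mu) = \phi_n''(a)\, w \otimes w$, together with $\kappa_\mu = 0$, the L\'evy formula reads
\[
Lf_n(\mu) = \phi_n'(a) \langle B_\mu(w),\mu\rangle + \tfrac12 \phi_n''(a)\langle Q_\mu(w \otimes w),\mu^2\rangle + J_n(\mu),
\]
where $J_n(\mu) = \int [\phi_n(b) - \phi_n(a) - \phi_n'(a) \Delta \rho(\Delta)]\, N(\mu, d\nu)$.

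Next, each term is bounded using the linear growth hypothesis. For the choice $\phi_n(a) = e^{-a/n}$ the drift is at least $-c a e^{-a/n}/n$, uniformly bounded by $c/e$ and vanishing pointwise as $n \to \infty$; the diffusion contribution is non-negative because $\partial^2 f_n$ is of the form $r\, w\otimes w$ with $r = e^{-a/n}/n^2 \ge 0$ and $\langle Q_\mu(w\otimes w),\mu^2\rangle \ge 0$. For the jump term I expand via Taylor and write the integrand as $\phi_n''(\xi)\Delta^2/2 + \phi_n'(a)\Delta(1 - \rho(\Delta))$; convexity of $s\mapsto e^{-s/n}$ makes the first piece non-negative, and the second is analyzed by splitting into $|\Delta| \le 1$ (where $1-\rho = 0$), $1 < |\Delta| \le a$ (where $\int \Delta^2 \wedge a^2\, N \le c a^2$ controls the relevant moment), and $|\Delta| > a$ (where the count bound $N(\{|\Delta|>a\}) \le c$ combined with the boundedness of $f_n$ keeps the contribution finite).

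The main obstacle will be ensuring uniform boundedness of $(Lf_n)^-$ across \emph{all} $\mu \in \Pcal_w$, not merely for each fixed $\mu$: the intermediate-to-large jumps produce a contribution of the form $(e^{-a/n}/n) \cdot \int \Delta(1-\rho) N$, whose naive bound grows with $\langle w,\mu\rangle$. Handling this requires playing off the decay of $\phi_n$ at infinity against the combined moment and count bounds, exploiting that products such as $a e^{-a/n}/n$ and $a^2 e^{-a/n}/n^2$ admit uniform bounds in $a \ge 1$ that are independent of $n$, while each vanishes pointwise. The pointwise convergence $(Lf_n)^- \to 0$ then follows from dominated convergence applied in each jump regime, and uniform boundedness follows by assembling the three regime bounds, concluding the verification of condition~\ref{T_Pw_6}.
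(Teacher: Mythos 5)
Your overall structure (choose $f_n$ as a smooth function of $\langle w,\mu\rangle$ that increases to $1$, Taylor-expand the jump integrand, split into regimes, and invoke the linear growth bound) matches the paper's. The critical difference is the choice of test function, and that choice is where your argument breaks down. The paper takes $f_n(\mu) = q(\langle w,\mu\rangle/n)$ with $q\in C_c^\infty(\R_+)$ compactly supported on $[0,2]$, nonincreasing, and $\mathbbm{1}_{[0,1]}\le q\le\mathbbm{1}_{[0,2]}$. You take $\phi_n(a) = e^{-a/n}$, which is never compactly supported. Compact support is what makes the whole argument work: it forces $q'(a/n)=q''(a/n)=0$ whenever $a=\langle w,\mu\rangle > 2n$, so that the drift and diffusion contributions, and the negative part of the jump contribution, are all localized to $\{a\le 2n\}$. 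On this set $1/n \le 2/a$ and $1/n^2 \le 4/a^2$, and the linear growth hypothesis then converts the bound into a constant $4c'c$ independent of $n$. Without the indicator $\mathbbm{1}_{\{a\le 2n\}}$ there is nothing to anchor the estimates.

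Concretely, your own accounting shows the trouble: the intermediate-jump contribution to $(Lf_n)^-$ is controlled by $\frac{e^{-a/n}}{n}\int_{1<\Delta\le a}\Delta(1-\rho)\,N(\mu,d\nu)$. The linear growth condition gives at best $\int_{1<\Delta\le a}\Delta\,N \le \int \Delta^2\wedge a^2\,N \le c a^2$ (and this bound is sharp: $N$ may put mass $\approx c a^2$ on $\{1<\Delta\le 2\}$). So the contribution is of order $a^2e^{-a/n}/n$. You assert that $a e^{-a/n}/n$ and $a^2 e^{-a/n}/n^2$ are uniformly bounded in $(a,n)$ — which is true, with maxima $e^{-1}$ and $4e^{-2}$ respectively — but the quantity that actually appears is $a^2e^{-a/n}/n$, and $\sup_{a\ge 1} a^2 e^{-a/n}/n = 4n e^{-2}\to\infty$. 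Thus $(Lf_n)^-$ is not uniformly bounded in $n$ under your ansatz, and condition~\ref{T_Pw_6} is not verified. Separately, $e^{-\langle w,\mu\rangle/n}$ is $x\mapsto x^{1/n}$ applied to $x=e^{-\langle w,\mu\rangle}$, which is not $C^2$ at $x=0$, so the Lemma~\ref{lem3}-style extension argument (which needs a $C^2$ composition to carry out the polynomial-approximation step) does not directly apply to your $f_n$ either; the paper's $q\circ(-\log)$ is smooth up to $0$ precisely because $q$ vanishes near $+\infty$. Both defects are cured simultaneously by replacing $e^{-a/n}$ with a compactly supported cutoff.
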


\begin{proof}
We showed in the proof of Lemma~\ref{lem3} that $L$ can be extended to all maps $f\colon\Pcal_w\to\R$ of the form \eqref{eqn18}, and that $Lf$ is given by \eqref{eq_Levy_type}. In fact, under our current assumptions, the argument shows that $(f,Lf)$ lies in the bp-closure (even the uniform closure) of the graph $\{(h,Lh)\colon h\in\Dcal_w\}$. In particular, these facts apply to the maps $f_n(\mu)=q(\langle w,\mu\rangle/n)$, where $q\in C_c^\infty(\R_+)$ is nonincreasing and satisfies $1_{[0,1]}\leq q\leq1_{[0,2]}$. It is clear that $f_n\to 1$ in the bounded pointwise sense. We must argue that $(Lf_n)^-\to0$ in the same sense. A direct computation gives
\begin{align*}
Lf_n(\mu)
& =q'(\langle w,\mu\rangle/n)\frac 1 n\langle  B_\mu(w),\mu\rangle +\frac 1 2 q''(\langle w,\mu\rangle/n)\frac 1 {n^2}\langle Q_\mu(w\otimes w),\mu^2\rangle\\
&\quad+\int_{\Pcal_w} \Big( q(\langle w,\nu\rangle/n)-q(\langle w,\mu\rangle/n)-q'(\langle w,\mu\rangle/n)\frac1n \langle w,\chi(\nu-\mu)\rangle \Big) \Nu(\mu,d\nu).
\end{align*}
Using the properties of $q$ and, in the last step, the assumed linear growth condition, we get for some constant $c'$ the lower bound,
\begin{align*}
& -c' \Big(\frac 1 n \langle B_\mu(w),\mu\rangle^+
+ \frac 1 {n^2}\langle Q_\mu(w\otimes w),\mu^2\rangle
+\int_{\Pcal_w}\frac{\langle w,\nu-\mu\rangle^2}{n^2}\land 1\,\Nu(\mu,d\nu)\Big)1_{\{\langle w,\mu\rangle\leq2n\}} \\
&\quad\ge -\frac{4c'}{\langle w,\mu\rangle^2} \Big(\langle w\otimes B_\mu(w),\mu^2\rangle^+
+ \langle Q_\mu(w\otimes w),\mu^2\rangle
+\int_{\Pcal_w}\langle w,\nu-\mu\rangle^2\land \langle w,\mu\rangle^2 \Nu(\mu,d\nu)\Big) \\
&\quad\ge -4 c' c.
\end{align*}
We deduce that $(Lf_n)^-$ is uniformly bounded in $n$, and it is clear from the first line that $(Lf_n)^-\to0$ pointwise.
\end{proof}

\begin{remark}
We now comment on the case $w\equiv1$, and consider the setting of Remark~\ref{rem2}. Lemma~\ref{lem2} would then be replaced by the requirement that $Lf$ can be extended to a function in $C(M_1(E^\Delta))$ for all $f\in \Dcal_w$. Concrete conditions when $L$ is of L\'evy type are that the maps $\mu\mapsto \kappa_\mu$, $\mu\mapsto B_\mu(\varphi)$, and $\mu\mapsto Q_\mu(\varphi\otimes\varphi)$ are continuous from $\Pcal_w$ to $\R$, $\R+C_0(\R^d)$, and $\R+C_0(\R^d)\otimes C_0(\R^d)$, respectively, and that $\mu\mapsto\int \langle \varphi,\nu-\mu\rangle^\ell\Nu(\mu,d\nu)$ can be extended to a function in $C(M_1(E^\Delta))$ for every $\ell\geq2$ and $\varphi\in C_w^\infty$. Next, Lemma~\ref{lem3} holds without the assumption that $L$ can be extended to all functions in the algebra generated by $\Dcal_w$ and $e^{-\langle w,\mu\rangle}$. In the proof, one simply takes $f_m(\mu)=\langle (1-\psi_m),\mu\rangle$.

In the case of condition \ref{T_Pw_6}, the result for $w\equiv 1$ is quite different from Lemma~\ref{lem4}. The reason is that, in contrast to the cases where condition \eqref{eq_wtoinfty} is in force,  for $w\equiv1$ the cemetery state $\dag$ is an isolated point. Thus a solution to the martingale problem can reach $\dag$ only by means of a jump. This has the consequence that \ref{T_Pw_6} can essentially be derived from \ref{T_Pw_3_new}. We now report a precise formulation of this statement.

\begin{lemma}
Suppose that $w\equiv1$, $L$ is of L\'evy type \eqref{eq_Levy_type} and satisfies conditions \ref{T_Pw_1}--\ref{T_Pw_3_new} of Theorem~\ref{T_Pw}. Then, if $\kappa_\mu=0$ for all $\mu\in \Pcal_w$, condition \ref{T_Pw_6} of Theorem~\ref{T_Pw} is satisfied.
\end{lemma}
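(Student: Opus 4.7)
The plan is to derive condition \ref{T_Pw_6} from condition \ref{T_Pw_3_new} by transporting its ingredients to $\Pcal_w$ via $T$ and then using the linearity of $L$. The special feature of the $w\equiv 1$ case that powers the argument is that $T(\Pcal_w)=\{\nu\in\Xcal:\nu(\{\Delta\})=0\}$ is precisely the zero set inside $\Xcal$ of the limit function $\widetilde f$ supplied by \ref{T_Pw_3_new}\ref{cond_a_new}. Note also that $w\equiv 1$ forces $\Xcal=M_1(E^\Delta)$ and $\Xcal_c=\Xcal$ for every $c\ge 1$, so condition \ref{T_Pw_3_new} only needs to be invoked once, at $c=1$, in line with Remark~\ref{rem2}.

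First I would fix the limit $\widetilde f$ and the approximating pairs $(\widetilde f_m,\widetilde g_m)$ provided by \ref{T_Pw_3_new}, and define $f_m:=\widetilde f_m\circ T$ and $g_m:=\widetilde g_m\circ T$ on $\Pcal_w$. Any element $(\widetilde f,\widetilde g)$ of the restricted graph \eqref{T_Pw_3_eq1} satisfies $h:=\widetilde f\circ T\in\Dcal_w$ and $\widetilde g\circ T=Lh$ on $\Pcal_w$, so pullback along $T$ maps the restricted graph into $\{(h,Lh):h\in\Dcal_w\}$; since bp-convergence on $\Xcal$ restricts to bp-convergence on $T(\Pcal_w)$, it likewise sends bp-closures into bp-closures, and each $(f_m,g_m)$ lies in the bp-closure of $\{(h,Lh):h\in\Dcal_w\}$. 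Because every $\nu\in T(\Pcal_w)$ has $\nu(\{\Delta\})=0$, part \ref{cond_a_new} gives $\widetilde f\equiv 0$ on $T(\Pcal_w)$, whence $f_m\to 0$ pointwise on $\Pcal_w$, while part \ref{cond_b_new} gives $\limsup_m g_m^+\le c'\cdot 0=0$ pointwise on $\Pcal_w$; the uniform boundedness built into \ref{T_Pw_3_new} upgrades both statements to the bp sense.

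To conclude, I would combine $(f_m,g_m)$ with the pair $(1,0)$. Under the convention of Remark~\ref{rem2}, the constant function $1=\langle 1,\mu\rangle$ belongs to $\Dcal_w$, and the L\'evy-type formula \eqref{eq_Levy_type} together with $\kappa_\mu\equiv 0$ gives $L1=0$, so $(1,0)\in\gph(L)$. Since $\gph(L)$ is a linear subspace of $\R^{\Pcal_w}\times\R^{\Pcal_w}$ and its bp-closure is itself a linear subspace (because bp-limits of bounded sequences commute with addition and scalar multiplication), the pair $(1-f_m,-g_m)$ lies in the bp-closure of $\gph(L)$; it satisfies $1-f_m\to 1$ bp and $(-g_m)^-=g_m^+\to 0$ bp, which is exactly condition \ref{T_Pw_6}. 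The main (but still mild) obstacle I anticipate is justifying that the bp-closure of $\gph(L)$ is a linear subspace, which is a standard transfinite-induction argument on the iterated bp-closure operation; everything else is routine transport along $T$.
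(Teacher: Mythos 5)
Your proof is correct and follows essentially the same route as the paper's: both take the sequence $(\widetilde f_m,\widetilde g_m)$ from condition~\ref{T_Pw_3_new} at $c=1$, pull back along $T$ (where $\widetilde f$ vanishes identically since $T(\Pcal_w)$ is exactly the set $\{\nu(\{\Delta\})=0\}$), and exploit $1\in\Dcal_w$ with $L1=-\kappa_\mu=0$ to pass to the pair $(1-\widetilde f_m\circ T,\,-\widetilde g_m\circ T)$ in the bp-closure of $\gph(L)$. The paper states this in one line; you merely spell out the intermediate facts (that pullback along $T$ sends bp-closures into bp-closures, and that the bp-closure of a linear subspace is a linear subspace), which is a reasonable amount of added detail rather than a different argument.
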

\begin{proof}
Choose $(\widetilde f_m,\widetilde g_m)_{m\in \N}$ as in \ref{T_Pw_3_new} for $c=1$. Define $f_m=1-\widetilde f_m\circ T$ and $g_m=-\widetilde g_m\circ T$. Since $\kappa_\mu=0$, $(f_m,g_m)$ lies in the bp-closure of the graph of $L$. Moreover, $(f_m,g_m^-)\to(1,0)$ in the bounded pointwise sense.
\end{proof}
\end{remark}

\section{Applications of the main result}\label{three}

Take $E=\R^d$ and assume that $w(x)=|x|^p$ for $|x|>2$, where $p\in(0,\infty)$. Consider a linear operator $L\colon\Dcal_w\to C(\Pcal_w)$ of L\'evy type \eqref{eq_Levy_type} with $\kappa=0$, $\Nu=0$, and $B$ and $Q$ given by
\begin{align*}
B_\mu(\varphi) &= b_\mu^\top\nabla\varphi+\frac 12\tr((\sigma_\mu^2+\tau_\mu^2)\nabla^2\varphi),\\
Q_\mu(\varphi\otimes \varphi) &= (\tau_\mu\nabla \varphi)\otimes (\tau_\mu\nabla \varphi),
\end{align*}
for some maps $b\colon\Pcal_w\times \R^d\to \R^d$ and $\sigma,\tau\colon\Pcal_w\times \R^d\to \S^{d}$. Here we use the notation $u\otimes v=u_1\otimes v_1+\cdots+u_d\otimes v_d$ whenever $u,v\colon\R^d\to\R^d$.

\begin{theorem}\label{thm1}
Assume that $b_\mu(x)=\widetilde b_{w\mu}(x)$, $\sigma_\mu(x)=\widetilde \sigma_{w\mu}(x)$, $\tau_\mu(x)=\widetilde \tau_{w\mu}(x)$ for some continuous maps $\widetilde b\colon\Xcal\times\R^d\to \R^d$ and $\widetilde\sigma,\widetilde\tau\colon\Xcal\times\R^d\to \S^{d}$ such that 
\begin{equation}\label{eqn32}
\frac{\widetilde b_{\nu,i}(x)}{1+|x|},\quad \frac{\widetilde \sigma_{\nu,ij}(x)^2}{1+|x|^2},\quad\text{and}\quad  \frac{\widetilde \tau_{\nu,ij}(x)}{1+|x|},\qquad i,j\in\{1,\ldots, d\},
\end{equation}
are continuous as functions from $\Xcal$ to $\R+C_0(\R^d)$. Assume also that
\begin{equation}\label{eqn33}
\sup_{x\in\R^d} \frac{|x|\,|\widetilde b_\nu(x)| +| \widetilde \sigma_\nu(x)|^2 + |\widetilde \tau_\nu(x)|^2}{1+|x|^2}
\le c \langle 1,\nu\rangle^\gamma, \quad \nu\in\Xcal,
\end{equation}
for some constants $c,\gamma\ge0$. Then conditions \ref{T_Pw_1}--\ref{T_Pw_3_new} of Theorem~\ref{T_Pw} are satisfied, and thus there exists a possibly killed solution $X$ to the martingale problem for $(L,\Dcal_w,\Pcal_w)$ for every initial condition $\mu\in\Pcal_w$.
If one can take $\gamma=0$ in \eqref{eqn33}, then condition \ref{T_Pw_6} of Theorem~\ref{T_Pw} holds, and thus $X_t\in \Pcal_w$ for all $t\geq0$.
\end{theorem}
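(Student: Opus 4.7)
The plan is to verify conditions \ref{T_Pw_1}--\ref{T_Pw_3_new} of Theorem~\ref{T_Pw} (and, when $\gamma=0$, also \ref{T_Pw_6}) by combining Theorem~\ref{T_optimality} with Lemmas~\ref{lem2}--\ref{lem4}. For the positive maximum principle, fix $f\in\Dcal_w$ and $\bar\mu\in\Pcal_w$ with $f(\bar\mu)=\max_{\Pcal_w}f$. Since $\kappa=0$ and $N=0$, $Lf(\bar\mu)$ splits into a drift term $\langle b_{\bar\mu}^\top\nabla\partial f(\bar\mu),\bar\mu\rangle$ and second-order terms in $\sigma_{\bar\mu}$, $\tau_{\bar\mu}$, $Q_{\bar\mu}$. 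By Theorem~\ref{T_optimality}\ref{T_optimality:1}, the smooth function $\partial f(\bar\mu)\in C^\infty_w$ attains its $\R^d$-maximum at every point of $\supp(\bar\mu)$; since $\R^d$ has no boundary, its gradient vanishes there, killing the drift term. Applying Theorem~\ref{T_optimality}\ref{T_optimality:4} once with $\tau=\sigma_{\bar\mu}$, whose $\aleph$ contribution is a pointwise nonnegative square, handles the $\sigma^2$ diffusion; applying it again with $\tau=\tau_{\bar\mu}$, whose $\aleph$ contribution is exactly $Q_{\bar\mu}$, handles the $\tau^2$ diffusion together with $Q$. Summing the three inequalities gives $Lf(\bar\mu)\le 0$.

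Condition \ref{T_Pw_2_new} will follow from Lemma~\ref{lem2}: with $\kappa=N=0$ only the drift and diffusion clauses must be checked. After substituting $\nu=T(\mu)$, these become pairings of $\nu$ or $\nu\otimes\nu$ against integrands built from $\widetilde b_\nu,\widetilde\sigma_\nu,\widetilde\tau_\nu$, the factor $w^{-1}$, and derivatives of $\varphi\in C_w^\infty$, multiplied by $e^{-\langle 1,\nu\rangle}$. The continuity hypothesis \eqref{eqn32} makes the coefficient fields continuous in $\nu\in\Xcal$ as $\R+C_0(\R^d)$-valued maps, and the exponential factor provides decay as $\nu$ approaches the cemetery state, yielding the $C_0$-type property for each term.

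For condition \ref{T_Pw_3_new}, invoke Lemma~\ref{lem3} with $\psi_m(x)=\psi(x/m)$, where $\psi\in C_c^\infty(\R^d)$ is $[0,1]$-valued and equal to $1$ on the unit ball. The scalings $|\nabla\psi_m|\le C/m$ and $|\nabla^2\psi_m|\le C/m^2$ on the transition annulus $m\le|x|\le 2m$ precisely absorb the growths $w\sim|x|^p$, $|\nabla w|\sim|x|^{p-1}$, $|\nabla^2 w|\sim|x|^{p-2}$, and \eqref{eqn33} then yields the pointwise estimate $|B_\mu((1-\psi_m)w)(x)|\le C\langle 1,\nu\rangle^\gamma w(x)\mathbf{1}_{\{|x|\ge m\}}$. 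Integrating against $\mu$ and passing through $T$ gives $h_m(\mu)\le C\langle 1,\nu\rangle^\gamma\,\nu(\{|x|\ge m\})$; continuity from above with $\bigcap_m\{|x|\ge m\}=\{\Delta\}$ in $E^\Delta$ yields $\nu(\{|x|\ge m\})\to\nu(\{\Delta\})$, and on $\Xcal_c$ we bound $\langle 1,\nu\rangle^\gamma\le c^\gamma$. This is the required pointwise limsup with $c'=Cc^\gamma$.

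Finally, under $\gamma=0$, apply Lemma~\ref{lem4}: the coefficient bounds become uniform in $\nu$, and a direct computation using $|\nabla w|\lesssim|x|^{p-1}$ and $|\nabla^2 w|\lesssim|x|^{p-2}$ at infinity yields $\langle w\otimes B_\mu(w),\mu^2\rangle^+ + \langle Q_\mu(w\otimes w),\mu^2\rangle \le c\langle w,\mu\rangle^2$, the jump integral being absent since $N=0$. The extendibility of $L$ required by Lemmas~\ref{lem3}--\ref{lem4} follows from the Taylor-approximation argument given in the proof of Lemma~\ref{lem3}. The main obstacle I anticipate is the cutoff bookkeeping in step 3: all the cutoff-derivative terms in the annulus must be bounded tightly enough that the estimate on $B_\mu((1-\psi_m)w)$ retains the order $\langle 1,\nu\rangle^\gamma w(x)\mathbf{1}_{\{|x|\ge m\}}$, so that only the cemetery mass $\nu(\{\Delta\})$ survives in the limit.
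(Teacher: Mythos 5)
Your overall route is the same as the paper's: verify the positive maximum principle through Theorem~\ref{T_optimality}, then invoke Lemmas~\ref{lem2}, \ref{lem3}, \ref{lem4} for conditions \ref{T_Pw_2_new}--\ref{T_Pw_6}, with the cutoff $\psi_m(x)=\psi(x/m)$ for the $\Xcal_c$ estimate and the linear growth bound for \ref{T_Pw_6}. That structure is correct.

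However, your verification of condition \ref{T_Pw_1} contains a genuine error. You apply Theorem~\ref{T_optimality}\ref{T_optimality:4} with $\tau=\sigma_{\overline\mu}$ and discard the resulting term $\langle\aleph_{\sigma_{\overline\mu}}(\partial^2 f(\overline\mu)),\overline\mu^2\rangle$ on the grounds that it is ``a pointwise nonnegative square.'' This is false: $\aleph$ is applied to $\partial^2 f(\overline\mu)$, a general element of $(C^\infty_w)^{\otimes 2}$, not to the rank-one tensor $\partial f(\overline\mu)\otimes\partial f(\overline\mu)$, so the integrated quantity has no intrinsic sign. Concretely, for $f(\mu)=\langle\varphi,\mu\rangle e^{-\langle w,\mu\rangle}$ one has $\partial^2 f(\overline\mu)=(\langle\varphi,\overline\mu\rangle\, w\otimes w - w\otimes\varphi-\varphi\otimes w)e^{-\langle w,\overline\mu\rangle}$, and using the first-order relation $\nabla\varphi=\langle\varphi,\overline\mu\rangle\nabla w$ on $\supp\overline\mu$ from \ref{T_optimality:1} one computes
\[
\langle\aleph_{\sigma_{\overline\mu}}(\partial^2 f(\overline\mu)),\overline\mu^2\rangle = -\langle\varphi,\overline\mu\rangle\,\bigl|\langle \sigma_{\overline\mu}^\top\nabla w,\overline\mu\rangle\bigr|^2 e^{-\langle w,\overline\mu\rangle}\le 0,
\]
since $\langle\varphi,\overline\mu\rangle e^{-\langle w,\overline\mu\rangle}=f(\overline\mu)\ge0$. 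So the term has the \emph{opposite} sign from what your argument requires, and the inequality from \ref{T_optimality:4} with $\sigma$ bounds $\langle\tr(\sigma^2\nabla^2\partial f),\overline\mu\rangle$ by a nonnegative quantity, which does not yield $Lf(\overline\mu)\le0$. The intended reading of the paper's citation of \ref{T_optimality:1} and \ref{T_optimality:4} is different: since $E=\R^d$ is open, part \ref{T_optimality:1} says every $x\in\supp\overline\mu$ is an interior global maximizer of the smooth function $\partial f(\overline\mu)$, so both $\nabla\partial f(\overline\mu)(x)=0$ (killing the drift term) \emph{and} $\nabla^2\partial f(\overline\mu)(x)\preceq0$ there; the latter gives $\tr(\sigma_{\overline\mu}(x)^2\nabla^2\partial f(\overline\mu)(x))\le0$ pointwise on $\supp\overline\mu$, hence $\langle\tr(\sigma^2\nabla^2\partial f),\overline\mu\rangle\le0$. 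Part \ref{T_optimality:4} is then applied only with $\tau=\tau_{\overline\mu}$, where $\aleph_{\tau_{\overline\mu}}=Q_{\overline\mu}$, controlling the remaining $\tau^2$-plus-$Q$ terms. With that correction, the rest of your sketch for conditions \ref{T_Pw_2_new}--\ref{T_Pw_6} tracks the paper's proof.
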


\begin{proof}
For notational simplicity we only prove the case $d=1$.
For all $\varphi\in C_c^\infty(\R)$ set 
$
f_\varphi(\mu)=\langle\varphi,\mu\rangle e^{-\langle w,\mu\rangle}
$
and recall that $\partial f_\varphi(\mu)= (\varphi  - \langle \varphi,\mu\rangle w )e^{-\langle w,\mu\rangle}$ and $\partial^2f_\varphi(\mu)= (\langle\varphi,\mu\rangle w\otimes w - 2w\otimes \varphi )e^{-\langle w,\mu\rangle}$. Define also $\widetilde f_\varphi=f_\varphi\circ T^{-1}$, and set 
$\widetilde B_{ \nu}(\varphi)= \widetilde b_{ \nu}\varphi'+\frac 12(\widetilde \sigma_{ \nu}^2+\widetilde \tau_{ \nu}^2)\varphi''$
and $\widetilde \Sigma_{ \nu}(\varphi)= \widetilde \tau_{ \nu} \varphi'$ for all $\nu\in \Xcal$ and $\varphi\in C_w^\infty$. 

Next, observe that $\varphi,\varphi'$, and $\varphi''$ are continuous and  $|\varphi'(x)x|/w(x)$ and $|\varphi''(x)x^2|/w(x)$ are bounded for all $\varphi\in C_w^\infty$. Condition \eqref{eqn32} then yields that for all such $\varphi$,
\begin{equation}\label{eqn32a}
\frac{\widetilde b_\nu(x)\varphi'(x)}{w(x)},\quad \frac{(\widetilde \sigma_\nu(x)^2+\tau_\nu(x)^2)\varphi''(x)}{w(x)},
\quad\text{and}\quad \frac{\widetilde \tau_\nu(x)\varphi'(x)}{w(x)}
\end{equation}
are continuous as functions from $\Xcal$ to $\R+C_0(\R)$, and condition \eqref{eqn33} yields that
\begin{equation}\label{eqn33a}
\sup_{x\in\R} \frac{ |\widetilde b_\nu(x)\varphi'(x)| + (\widetilde \sigma_\nu(x)^2 + \widetilde \tau_\nu(x)^2)|\varphi''(x)|+|\widetilde \tau_\nu(x)\varphi'(x)|}{w(x)}
\le c_\varphi \langle 1,\nu\rangle^\gamma, \quad \nu\in\Xcal,
\end{equation}
for some constant $c_\varphi$ depending on $\varphi$.
We are now ready to verify the conditions of Theorem~\ref{T_Pw}.

\ref{T_Pw_1}: The positive maximum principle follows directly form Theorem~\ref{T_optimality}\ref{T_optimality:1} and \ref{T_optimality:4}.

\ref{T_Pw_2_new}:
We verify the conditions of Lemma~\ref{lem2}. That is, in the current notation, we must check that the functions
\begin{equation}\label{eqn5}
\nu\mapsto \langle \widetilde B_\nu(\varphi)w^{-1},\nu\rangle e^{-\langle 1,\nu\rangle} \qquad \text{and}\qquad\nu\mapsto \langle \widetilde \Sigma_\nu(\varphi) w^{-1},\nu\rangle^2 e^{-\langle 1,\nu\rangle}
\end{equation}
are $C_0$ functions on $\Xcal$ for every $\varphi\in C^\infty_w$. To see that the function involving $\widetilde B_\nu$ is continuous, we write
\begin{align*}
\Big|\langle \widetilde B_\nu(\varphi) w^{-1},\nu\rangle-\langle \widetilde B_{\nu_n}(\varphi)w^{-1},\nu_n\rangle\Big|
&\leq \Big|\langle \widetilde B_\nu(\varphi)w^{-1},\nu-\nu_n\rangle\Big| \\
&\quad +\sup_{x\in\R}\Big| (\widetilde B_\nu(\varphi)(x)- \widetilde B_{\nu_n}(\varphi)(x))w^{-1}(x)\Big| \langle 1,\nu_n\rangle.
\end{align*}
If $\nu_n\Rightarrow\nu$, then the first term tends to zero since $\widetilde B_{\nu}(\varphi)w^{-1}\in \R+C_0(\R)$. The second term tends to zero due to \eqref{eqn32a}. A similar calculation shows that the function in \eqref{eqn5} involving $\widetilde\Sigma_\nu$ is continuous as well.

It remains to show that the functions in \eqref{eqn5} vanish at infinity. To see this, note that
\begin{equation}\label{eqn35}
\begin{aligned}
|\langle \widetilde B_\nu(\varphi)w^{-1},\nu\rangle|
&\leq \sup_{x\in\R}  \frac{|\widetilde b_\nu(x) \varphi'(x)| + \frac12 (\widetilde \sigma_\nu(x)^2+\widetilde \tau_\nu(x)^2) |\varphi''(x)|}{w(x)} \langle 1,\nu\rangle,\\
|\langle \widetilde Q_\nu(\varphi\otimes \varphi)w^{-2},\nu^2\rangle|
&\leq  \sup_{x\in\R} \frac{|\widetilde\tau_\nu(x) \varphi'(x)|^2}{w(x)^2} \langle 1,\nu\rangle^2
\end{aligned}
\end{equation}
for all $\varphi\in C_w^\infty$. Since the suprema in \eqref{eqn35} grow at most polynomially in $\langle1,\nu\rangle$ due to \eqref{eqn33a}, the functions in \eqref{eqn5} vanish at infinity.

\ref{T_Pw_3_new}:
We verify the conditions of Lemma~\ref{lem3}. We have already shown that $L$ satisfies conditions \ref{T_Pw_1}--\ref{T_Pw_2_new} of Theorem~\ref{T_Pw}. Let us show that $L$ can be extended to all functions $f$ in the algebra generated by $\Dcal_w$ and $e^{-\langle w,\mu\rangle}$, and that $Lf$ is given by \eqref{eq_Levy_type}.

Fix  $\psi_n(x):=\psi(x/n)$ for some $\psi\in C_c^\infty(\R)$ such that $1_{[-1,1]}\leq\psi\leq1_{[-2,2]}$ and set
$\widetilde f_n:=p(\widetilde f_{\varphi_1},\ldots,\widetilde f_{\varphi_k},\widetilde f_{\psi_n})$ and $\widetilde f:=p(\widetilde f_{\varphi_1},\ldots,\widetilde f_{\varphi_k},e^{-\langle 1,\fdot\rangle})$
for an arbitrary polynomial $p:\R^{k+1}\to\R$ with $p(0)=0$ and some $\varphi_1,\ldots,\varphi_k\in C_c^\infty(\R)$.
Note that $\widetilde f_n$ converges to 
$\widetilde f$ on $\Xcal $ bounded pointwise. 
Next, observe that condition \eqref{eqn33} yields 
\begin{equation}\label{eqn301}
\begin{aligned}
|\langle\widetilde B_\nu(\psi_n)w^{-1},\nu\rangle|
&\leq 2
\sup_{x\in\R}\Big|\frac{\widetilde b_\nu(x)}{n}1_{[-2n,2n]}(x)+\frac{\widetilde\sigma_\nu(x)^2+\widetilde\tau_\nu(x)^2}{2n^2}1_{[-2n,2n]}(x)\Big|\langle w^{-1},\nu\rangle\\
&\leq c''
\sup_{x\in\R}\frac{|x|\,|\widetilde b_\nu(x)|+ \widetilde\sigma_\nu(x)^2+\widetilde\tau_\nu(x)^2}{1+x^2} \\
&\leq c''c \langle 1,\nu\rangle^\gamma.
\end{aligned}
\end{equation}
for some constant $c''$.
Similarly, one can bound
$$
|\langle \widetilde Q_\nu(\psi_n\otimes \psi_n)/w^2,\nu^2\rangle|\quad\text{ and }\quad
|\langle \widetilde Q_\nu(\varphi\otimes \psi_n)/w^2,\nu^2\rangle|
$$
by $c'''c\langle1,\nu\rangle^\gamma$, for some constant $c'''$ and  all $\varphi\in\{\varphi_1,\ldots,\varphi_k,w\}$. Since both bounds grow at most polynomially in $\langle 1,\nu\rangle$, the product rule and the polarization identity yield that $(\widetilde L\widetilde f_n)_{n\in \N}$ is a bounded sequence in $C_0(\Xcal)$. Moreover, the dominated convergence theorem shows that $\widetilde L\widetilde f_n(\nu)\to \widetilde g(\nu)$ for all $\nu\in\Xcal$, where $\widetilde g\circ T(\mu)=Lf(\mu)$ as given by \eqref{eq_Levy_type} for $f=\widetilde f\circ T=p( f_{\varphi_1},\ldots, f_{\varphi_k}, e^{-\langle w,\fdot\rangle})$. This proves that  $(\widetilde f,\widetilde g)$ lies in the bp-closure of the graph $\{(\widetilde f,\widetilde L\widetilde f) \colon \widetilde f\in\Dcal\}$, and thus that $\widetilde L$ can be extended to $\widetilde f$.
 By definition, this implies that $L$ can be extended to $f$ with $Lf$ given by \eqref{eq_Levy_type}.

Let now  $h_m$ be as in Lemma~\ref{lem3} and note that
$$h_m\circ T^{-1}(\nu)=\langle\widetilde B_\nu((1-\psi_m)w) w^{-1},\nu\rangle^+.$$
The first inequality in \eqref{eqn35}, condition \eqref{eqn33a}, and the reasoning in \eqref{eqn301} imply that $(h_m|_\Kcal)_{m\in \N}$  is a bounded sequence in $C(\Kcal)$ for every compact set $\Kcal$. Moreover,
$$\lim_{m\to\infty}\frac{\widetilde B_\nu((1-\psi_m)w)}w=\Big(\widetilde b_\nu(\Delta) \frac{w'(\Delta)}{w(\Delta)}+\frac 12(\widetilde\sigma_\nu^2(\Delta)+\widetilde\tau_\nu^2(\Delta))\frac{w''(\Delta)}{w(\Delta)}\Big) 1_{\{\Delta\}},
$$
which is well defined by \eqref{eqn32a}. Write the right-hand side as $c' 1_{\{\Delta\}}$ for a constant $c'$.  By the dominate convergence theorem we can conclude that  $h_m\circ T^{-1}\to c'\langle 1_{\{\Delta\}},\fdot\rangle$ in the bounded pointwise sense on every compact subset of $\Xcal$.  Thus the conditions of Lemma~\ref{lem3} hold, as required.

\ref{T_Pw_6}:
We verify the conditions of Lemma~\ref{lem4} under the additional assumption that one can take $\gamma=0$ in \eqref{eqn33}. We already know that $L$ satisfies the conditions \ref{T_Pw_1}--\ref{T_Pw_2_new} of Theorem~\ref{T_Pw} and that $\kappa_\mu=0$ for all $\mu\in \Pcal_w$. 
We show now that for every function $f$ in the algebra generated by $\Dcal_w$ and $e^{-\langle w,\mu\rangle}$, the pair $(f,Lf)$ with $Lf$ given by \eqref{eq_Levy_type} lies in the bp-closure of the graph $\{(h,Lh)\colon h\in\Dcal_w\}$.
To do this, we just need to follow the first part of the proof of \ref{T_Pw_3_new}. Indeed, setting $ f_n= \widetilde f_n\circ T$ we get that $(f_n,Lf_n)\in\{(h,Lh)\colon h\in\Dcal_w\}$ converges bounded pointwise to $(f,Lf)$ for $Lf$ as given in  \eqref{eq_Levy_type}. 

The last condition of Lemma~\ref{lem4} to be verified is the linear growth. By \eqref{eqn33a} with $\varphi=w$ and $\gamma=0$ we compute
\begin{align*}
\langle w\otimes B_\mu(w),\mu^2\rangle
&=\langle w,\mu\rangle \langle b_\mu w'+\frac 12(\sigma_\mu^2+\tau_\mu^2)w'',\mu\rangle
\leq c_w \langle w,\mu\rangle^2\\
 \langle Q_\mu(w\otimes w),\mu^2\rangle
&=\langle \tau_\mu w',\mu\rangle^2
\leq c_w \langle w,\mu\rangle^2.
\end{align*}
The claim follows.
\end{proof}

\begin{remark}
As will be explored further in Section~\ref{IIIsec42}, the linear operator $L$ introduced at the beginning of the section coincides with the generator of the conditional distribution $X_t=\P(Z_t\in\fdot\mid\Fcal_t^0)$ of a solution of a McKean--Vlasov equation with common noise,
$$d Z_t=b_{X_t}(Z_t)dt+\sigma_{X_t}(Z_t)dW_t+\tau_{X_t}(Z_t)dW_t^0,\quad X_t=\P(Z_t\in\fdot\mid\Fcal_t^0),$$
where $ \Fcal_t^0:=\sigma(W^0_s,s\leq t)$.
The same result provided by Theorem~\ref{thm1} can be obtained when the common noise is replaced by a common jump mechanism. For example, consider a poisson random measure $\Pcal^0(dt,dy)$ with compensator $F(dy)dt$ for some probability measure $F$ supported on $\R$, and let $(X,Z)$ satisfy the McKean--Vlasov equation
$$d Z_t=b_{X_t}(Z_t)dt+\sigma_{X_t}(Z_t)dW_t+\int \ell_{X_{t-}}(Z_{t-},y)\Pcal^0(dt,dy),\quad X_t=\P(Z_t\in\fdot\mid\Fcal_t^0),$$
where $ \Fcal_t^0:=\sigma(\Pcal^0([0,s],dy),s\leq t)$. Here $\ell_\mu(x,y)$ describes the sizes of the common jumps, which we assume are confined to a cube $[0,c]^d$ for some $c>0$. The generator of the probability measure valued process $X$ is then the linear operator of L\'evy type \eqref{eq_Levy_type} given by
$$
Lf(\mu) =\langle B_\mu(\partial f(\mu)), \mu\rangle + \int_{\R}f(\gamma(\mu,y))-f(\mu) F(dy)
$$
for $
B_\mu(\varphi) = b_\mu^\top\nabla\varphi+
\frac 12\tr(\sigma_\mu^2\nabla^2\varphi)+
\int (\varphi(\fdot+\ell_{\mu}(\fdot,y))-\varphi) F(dy),
$
where
$$\gamma(\mu,y):=(\fdot+\ell_\mu(\fdot,y))_*\mu\in \Pcal_w.$$
Note that since $F$ is a probability measure, we are free to choose $\chi\equiv 0$ as truncation function.
Suppose now that  $b$ and $\sigma$ satisfy the conditions of Theorem~\ref{thm1} with $\gamma=0$. Assume also that $\ell_\mu(x,y)=\widetilde\ell_{w\mu}(x,y)$ for some continuous map $(\nu,x)\mapsto\widetilde\ell_\nu(x,y)$ from $\Xcal\times \R^d$ to $[0,c]^d\setminus\{0\}$ such that
$\widetilde\ell_{\nu}(x,y)$
is a continuous map from $\Xcal$ to the space $C(\R^d,[0,c]^d)$ of continuous functions from $\R^d$ to $[0,c]^d$.
Then there exists a solution $X$ to the martingale problem for $(L,\Dcal_w,\Pcal_w)$ for every initial condition $\mu\in\Pcal_w$. This result can be proved following the proof of Theorem~\ref{thm1}.
\end{remark}

The next corollary follows directly from Theorem~\ref{thm1}.
\begin{corollary}
Suppose that $b$, $\sigma$, and $\tau$ do not depend on $\mu$ and
$$\frac{ b_{i}(x)}{1+|x|},\quad \frac{ \sigma_{ij}(x)^2}{1+|x|^2},\quad  \frac{ \tau_{ij}(x)}{1+|x|}\in  \R+C_0(\R^d),\qquad i,j\in\{1,\ldots, d\}.$$
Then there exists a  solution $X$ to the martingale problem for $(L,\Dcal_w,\Pcal_w)$ for every initial condition $\mu\in\Pcal_w$.
\end{corollary}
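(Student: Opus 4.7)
The plan is to derive this corollary as a direct application of Theorem~\ref{thm1}, with the parameter $\gamma=0$ in \eqref{eqn33} so that both the existence part and the non-explosion part of that theorem apply. The whole content of the proof is the observation that constancy in $\mu$ trivializes the continuity requirement in $\nu$, while $\R+C_0(\R^d)$ membership automatically yields the required uniform quadratic growth.

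Concretely, I would first set $\widetilde b_\nu := b$, $\widetilde \sigma_\nu := \sigma$, $\widetilde \tau_\nu := \tau$ for every $\nu\in\Xcal$. Since these maps are constant in $\nu$, they are trivially continuous as maps $\Xcal\times\R^d\to\R^d$ (respectively $\S^d$), and the $\R+C_0(\R^d)$-valued continuity in $\nu$ demanded by \eqref{eqn32} follows immediately from the stated assumption that $b_i/(1+|\cdot|)$, $\sigma_{ij}^2/(1+|\cdot|^2)$ and $\tau_{ij}/(1+|\cdot|)$ lie in $\R+C_0(\R^d)$.

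For the growth condition \eqref{eqn33}, I would note that each of those three quotients, being an element of $\R+C_0(\R^d)$, is bounded on $\R^d$. This yields pointwise bounds of the form $|b(x)|\lesssim 1+|x|$, $|\sigma(x)|^2\lesssim 1+|x|^2$, and $|\tau(x)|^2\lesssim 1+|x|^2$ (squaring the bound on $|\tau_{ij}|/(1+|x|)$), so that
\[
\sup_{x\in\R^d}\frac{|x|\,|b(x)|+|\sigma(x)|^2+|\tau(x)|^2}{1+|x|^2} \le c
\]
for some absolute constant $c$, which is \eqref{eqn33} with $\gamma=0$. Applying Theorem~\ref{thm1} then gives a possibly killed solution $X$ for every $\mu\in\Pcal_w$, and since $\gamma=0$ condition \ref{T_Pw_6} is satisfied as well, so $X_t\in\Pcal_w$ for all $t\ge 0$. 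There is no serious obstacle; the only thing one has to pay attention to is the squaring step when passing from the $\tau_{ij}/(1+|x|)$ bound to a bound on $|\tau(x)|^2$.
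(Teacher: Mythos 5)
Your proposal is correct and is exactly the route the paper takes: the paper's proof of this corollary is simply the remark that it ``follows directly from Theorem~\ref{thm1},'' and you have filled in the (routine) verifications that constancy in $\mu$ gives the continuity in \eqref{eqn32} and that the $\R+C_0(\R^d)$ hypotheses give \eqref{eqn33} with $\gamma=0$, including the correct squaring step for $\tau$.
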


We consider now a different linear operator $L\colon\Dcal_w\to C(\Pcal_w)$ of L\'evy type \eqref{eq_Levy_type} with $\kappa=0$, $\Nu=0$, and $B$ and $Q$ given by
\begin{equation}\label{eqn3}
B_\mu(\varphi) = b_\mu^\top\nabla\varphi+\frac 12\tr(\sigma_\mu^2\nabla^2\varphi), \quad Q_\mu(\varphi\otimes \varphi) = \alpha_\mu\Psi(\varphi\otimes \varphi)
\end{equation}
for some maps $b\colon\Pcal_w\times \R^d\to \R^d$, $\sigma\colon\Pcal_w\times \R^d\to \S^{d}$, and $\alpha\colon\Pcal_w\times \R^{2d}\to \R$. Here we use the notation $\Psi(\varphi\otimes \varphi)(x,y):=(\varphi(x)-\varphi(y))^2$ for all $\varphi\in C_w^\infty$ and $x\in \R$.

\begin{theorem}\label{thm2}
Assume that $b_\mu(x)=\widetilde b_{w\mu}(x)$, $\sigma_\mu(x)=\widetilde \sigma_{w\mu}(x)$, $\alpha_\mu(x,y)=\widetilde \alpha_{w\mu}(x,y)$ for some continuous maps $\widetilde b\colon\Xcal\times\R^d\to \R^d$, $\widetilde\sigma\colon\Xcal\times\R^d\to \S^{d}$ and $\widetilde \alpha\colon\Xcal\times\R^{2d}\to \R$. Assume that
\[
\frac{\widetilde b_{\nu,i}(x)}{1+|x|}\quad\text{and}\quad \frac{\widetilde \sigma_{\nu,ij}(x)^2}{1+|x|^2},\qquad i,j\in\{1,\ldots, d\},
\]
are continuous as functions from $\Xcal$ to $\R+C_0(\R^d)$, and that $\alpha_\mu(x,y)$ is  continuous as a function from $\Xcal$ to $\R+C_0(\R^{2d})$. Assume also that
\[
\sup_{x\in\R^d} \frac{|x|\,|\widetilde b_\nu(x)| +| \widetilde \sigma_\nu(x)|^2}{1+|x|^2} + \sup_{x,y\in \R^{d}}|\widetilde \alpha_\nu(x,y)|
\le c \langle 1,\nu\rangle^\gamma, \quad \nu\in\Xcal,
\]
for some constants $c,\gamma\ge0$. Then conditions \ref{T_Pw_1}--\ref{T_Pw_3_new} of Theorem~\ref{T_Pw} are satisfied, and thus there exists a possibly killed solution $X$ to the martingale problem for $(L,\Dcal_w,\Pcal_w)$ for every initial condition $\mu\in\Pcal_w$.
If one can take $\gamma=0$, then condition \ref{T_Pw_6} of Theorem~\ref{T_Pw} holds, and thus $X_t\in \Pcal_w$ for all $t\geq0$.
\end{theorem}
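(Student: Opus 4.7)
The plan is to mirror the proof of Theorem~\ref{thm1} step by step, verifying conditions \ref{T_Pw_1}--\ref{T_Pw_3_new} of Theorem~\ref{T_Pw}, and then \ref{T_Pw_6} under the stronger hypothesis $\gamma=0$. The only substantive change is the new form of $Q_\mu$, which now has the ``carr\'e du champ'' shape $Q_\mu(\varphi\otimes\varphi)(x,y)=\alpha_\mu(x,y)(\varphi(x)-\varphi(y))^2$ rather than the ``gradient-squared'' form $(\tau_\mu\nabla\varphi)^{\otimes 2}$ used in Theorem~\ref{thm1}; the $b$- and $\sigma$-contributions are handled verbatim as before (the previous $\tau^2$-contribution to the drift simply drops out).

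For the positive maximum principle at an interior maximizer $\overline\mu\in\Pcal_w$, the $B_\mu$-piece is controlled by Theorem~\ref{T_optimality}\ref{T_optimality:1} and \ref{T_optimality:4}, exactly as in Theorem~\ref{thm1}. For the new diffusion term, the identity $\Psi(\psi)(x,y)=\psi(x,x)+\psi(y,y)-2\psi(x,y)$ allows one to write
\[
\langle Q_{\overline\mu}(\partial^2 f(\overline\mu)),\overline\mu^2\rangle=\int\!\!\!\int\alpha_{\overline\mu}(x,y)\,\langle\partial^2 f(\overline\mu),(\delta_x-\delta_y)^2\rangle\,\overline\mu(dx)\overline\mu(dy).
\]
Theorem~\ref{T_optimality}\ref{T_optimality:2} gives the second factor $\le 0$ on $\supp(\overline\mu)^2$, while the L\'evy-type positivity $\langle Q_\mu(\varphi\otimes\varphi),\mu^2\rangle\ge 0$ applied to localized $\varphi$ forces the symmetrization of $\alpha_{\overline\mu}$ to be $\ge 0$ on $\supp(\overline\mu)^2$; the integrand is then nonpositive.

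To verify \ref{T_Pw_2_new}--\ref{T_Pw_3_new} I would check the hypotheses of Lemma~\ref{lem2} and Lemma~\ref{lem3}. Transporting through $T(\mu)=w\mu=:\nu$ yields
\[
\langle Q_\mu(\varphi\otimes\varphi),\mu^2\rangle\,e^{-\langle w,\mu\rangle}
=e^{-\langle 1,\nu\rangle}\int\!\!\!\int \widetilde\alpha_\nu(x,y)\,\frac{(\varphi(x)-\varphi(y))^2}{w(x)w(y)}\,\nu(dx)\nu(dy);
\]
continuity in $\nu\in\Xcal$ follows from $\widetilde\alpha_\nu\in \R+C_0(\R^{2d})$ depending continuously on $\nu$, and the $C_0$-decay in $\Xcal$ (as $\langle 1,\nu\rangle\to\infty$) follows because the integrand grows at most polynomially in $\langle 1,\nu\rangle$ (via $|\widetilde\alpha_\nu|\le c\langle 1,\nu\rangle^\gamma$) and is dominated by $e^{-\langle 1,\nu\rangle}$. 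The extension of $L$ to the algebra generated by $\Dcal_w$ and $e^{-\langle w,\mu\rangle}$ required in Lemma~\ref{lem3} proceeds, as in Theorem~\ref{thm1}, by $C^2$-polynomial approximation of the outer function $p$; and since $h_m$ in Lemma~\ref{lem3} depends only on $B_\mu$, that step is literally unchanged.

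Finally, for \ref{T_Pw_6} under $\gamma=0$ I would apply Lemma~\ref{lem4}. The linear-growth estimate for $B_\mu$ is identical to Theorem~\ref{thm1} (minus the $\tau^2$ term). The new point is the bound
\[
\langle Q_\mu(w\otimes w),\mu^2\rangle=\int\!\!\!\int \alpha_\mu(x,y)(w(x)-w(y))^2\,\mu(dx)\mu(dy)\le c\langle w,\mu\rangle^2,
\]
which I expect to be the main technical obstacle: unlike in Theorem~\ref{thm1}, the integrand is not a perfect square of a single-point expression, so one must combine the boundedness $|\widetilde\alpha_\nu|\le c$ with the $\R+C_0$-representation of $\widetilde\alpha$ at infinity to absorb the $(w(x)-w(y))^2$ factor into a multiple of $\langle w,\mu\rangle^2$ (using, in effect, that the nonzero constant part of $\alpha$ at infinity sees only the variance of $w$, which is the quantity one needs to bound). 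Once this estimate is in place, Theorem~\ref{T_Pw} gives the conclusion exactly as in Theorem~\ref{thm1}.
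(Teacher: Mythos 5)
Your overall plan is the same as the paper's (which just says ``follow the proof of Theorem~\ref{thm1}, applying Theorem~\ref{T_optimality}\ref{T_optimality:2} for the positive maximum principle instead of \ref{T_optimality:1} and \ref{T_optimality:4}''), and your treatment of the positive maximum principle is essentially right: on $\supp(\overline\mu)^2$, Theorem~\ref{T_optimality}\ref{T_optimality:2} gives $\Psi(\partial^2f(\overline\mu))(x,y)\le 0$, and nonnegativity of the symmetrized $\alpha_{\overline\mu}$ (forced by $\langle Q_\mu(\varphi\otimes\varphi),\mu^2\rangle\ge0$) makes the $Q$-term nonpositive. One small imprecision: you say the $B_\mu$-piece is ``controlled by \ref{T_optimality:1} and \ref{T_optimality:4} exactly as in Theorem~\ref{thm1}.'' Here $B_\mu$ has no $\tau^2$ term, so \ref{T_optimality:4} is not needed; \ref{T_optimality:1} alone suffices because it forces $\partial f(\overline\mu)$ to attain its maximum over $\R^d$ at every $x\in\supp(\overline\mu)$, whence $\nabla\partial_xf(\overline\mu)=0$ and $\nabla^2\partial_xf(\overline\mu)\preceq0$, and the whole $\langle B_{\overline\mu}(\partial f),\overline\mu\rangle$-term is $\le 0$. (Applying \ref{T_optimality:4} with $\tau=\sigma$ would introduce an $\aleph_\sigma$-term that does not cancel and is not obviously signed.)

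The real problem is the step you flag yourself for condition \ref{T_Pw_6}: the bound $\langle Q_\mu(w\otimes w),\mu^2\rangle\le c\langle w,\mu\rangle^2$. Your heuristic — that the constant part of $\alpha$ at infinity ``sees only the variance of $w$, which is the quantity one needs to bound'' — misidentifies the difficulty, because the $w$-variance is precisely the quantity that is \emph{not} controlled by $\langle w,\mu\rangle^2$ over $\Pcal_w$. Concretely, with $0\le\alpha_\mu\le c$ one only gets
\[
\langle Q_\mu(w\otimes w),\mu^2\rangle
= \int\!\!\int \alpha_\mu(x,y)\,(w(x)-w(y))^2\,\mu(dx)\mu(dy)
\le 2c\bigl(\langle w^2,\mu\rangle-\langle w,\mu\rangle^2\bigr),
\]
and $\langle w^2,\mu\rangle$ is not dominated by a multiple of $\langle w,\mu\rangle^2$: taking $\mu_n=(1-n^{-1})\delta_0+n^{-1}\delta_{x_n}$ with $w(x_n)=n$ gives $\langle w,\mu_n\rangle\to 1+w(0)$ while $\langle w^2,\mu_n\rangle\to\infty$ (so $\langle Q_{\mu_n}(w\otimes w),\mu_n^2\rangle\to\infty$ for $\alpha\equiv 1$). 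The $\R+C_0$-structure of $\widetilde\alpha$ does not repair this, since a nonzero constant part already produces the divergence, and even a $C_0$ part decaying like $1/n$ at $(0,x_n)$ leaves the contribution bounded away from zero. So the argument as sketched does not establish the linear growth hypothesis of Lemma~\ref{lem4}; one needs a pointwise decay assumption of the type $|\alpha_\mu(x,y)|(w(x)-w(y))^2\le C\,w(x)w(y)$ (or equivalently a bound on $(w(x)/w(y)+w(y)/w(x))|\alpha_\mu(x,y)|$), which does not follow from the stated hypotheses. Note the same integrability issue already appears in verifying $C_0$ type for $\varphi=w$ in Lemma~\ref{lem2}, since $(w(x)-w(y))^2/(w(x)w(y))$ is unbounded. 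This is a gap in your write-up that must be closed, either by isolating an extra hypothesis or by giving a genuinely different argument.
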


\begin{proof}
The proof  follows the proof of Theorem~\ref{thm1}, applying Theorem~\ref{T_optimality}\ref{T_optimality:2} to verify the positive maximum principle instead of Theorem~\ref{T_optimality}\ref{T_optimality:4}.
\end{proof}

\begin{example}
The Fleming--Viot diffusion was introduced by \cite{FV:79} and subsequently studied by many other authors. Its generator is of the form \eqref{eqn3} for $b\equiv0$, $\sigma$ constant, and $\alpha\equiv 1$. Here $E=\R$. A generalization of this model allows for so-called \emph{weighted sampling} by letting the coefficient $\alpha$ be non-constant. The interpretation is that the \emph{sampling-replacement rate} depends on the \emph{types} $x$ and $y$: $\alpha(x,y)$ denotes the rate at which an individual of type $x$ is replaced by one of type $y$. See Section~5.7.8 of \cite{D:93} for further details. Theorem~\ref{thm2} permits us to deal with the case where $\alpha$ in addition depends on $\mu$. Thus, the sampling-replacement rate depends not only on $x$ and $y$, but on the entire distribution $\mu$ of types. As a concrete example, choosing $w(x):=x^2$, we can let the sampling-replacement rate be monotonic in the variance of the distribution. Setting $\textup{Var}(\mu):=\langle (\fdot)^2,\mu\rangle-\langle (\fdot),\mu\rangle^2$, the corresponding operators are then given by
\begin{align*}
B_\mu(\varphi) = \frac 12\sigma^2\varphi'', \qquad Q_\mu(\varphi\otimes \varphi) = f\big(\textup{Var}(\mu)\big)\Psi(\varphi\otimes \varphi),
\end{align*}
for some nonnegative increasing bounded function $f\in C^\infty(\R)$.
\end{example}

Similarly to standard SDEs with linearly growing coefficients, the linear growth properties (implicit in Lemma~\ref{lem4}) imply that all moments of $\langle w,X_t\rangle$ are finite.

\begin{proposition}\label{prop21}
Let $L$ be as in Theorem~\ref{thm1} and assume it satisfies the assumptions given there for $\gamma=0$. Let $X$ be a solution to the martingale problem for $(L,\Dcal_w,\Pcal_w)$ for some initial condition $\overline\mu\in\Pcal_w$. Then the following conditions hold.
\begin{enumerate}
\item\label{bla1} $\E[\langle w, X_t\rangle^k]\leq \langle w, \overline\mu \rangle^k e^{C_kt}$ for all $k\in\N$, where $C_k:=k(k+1)C$ for some $C>0$. 
\item\label{bla2} $X$ solves the martingale problem for $(L,\Ecal_w,\Pcal_w)$ where $\Ecal_w$ denotes the algebra generated by $\{\mu\mapsto\langle\varphi,\mu\rangle\colon \varphi\in C_w^\infty\}$ and $Lf$ is given by \eqref{eq_Levy_type} for all $f\in\Ecal_w$. 
Moreover, $f(X) - f(X_0) - \int_0^\fdot Lf(X_s) ds$ is a $k$-integrable martingale for all $f\in\Ecal_w$.
\end{enumerate}
\end{proposition}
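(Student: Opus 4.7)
The plan is to establish (i) first via a Gronwall argument applied to bounded truncations of $\mu\mapsto\langle w,\mu\rangle^k$, and then to deduce (ii) from (i) by a dominated-convergence argument that promotes the martingale property from $\Dcal_w$ to $\Ecal_w$.

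For (i), pick $\phi_R\in C^2(\R_+)$ that agrees with $x\mapsto x^k$ on $[0,R]$ and stabilizes to a constant beyond $R$, with the natural polynomial-growth controls $|\phi_R'(x)|\le k x^{k-1}$ and $|\phi_R''(x)|\le k(k-1) x^{k-2}$ valid for all $x\ge 1$ uniformly in $R$, and set $f_R(\mu):=\phi_R(\langle w,\mu\rangle)$. Adapting the bp-closure argument in the proof of Theorem~\ref{thm1}\ref{T_Pw_3_new} (approximate $\phi_R$ by polynomials $p_m$ with $p_m(0)=0$ applied to elements of the algebra generated by $\Dcal_w$ and $e^{-\langle w,\mu\rangle}$, on which $L$ has already been shown to act via \eqref{eq_Levy_type}), one shows that $L$ extends to $f_R$, with $Lf_R$ given by the chain rule from \eqref{eq_Levy_type}. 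Since $\kappa=\Nu=0$ here, this reduces to
\[
Lf_R(\mu)=\phi_R'(\langle w,\mu\rangle)\langle B_\mu(w),\mu\rangle+\tfrac12\phi_R''(\langle w,\mu\rangle)\langle Q_\mu(w\otimes w),\mu^2\rangle,
\]
and combining with the linear-growth bounds $|\langle B_\mu(w),\mu\rangle|\le c\langle w,\mu\rangle$ and $\langle Q_\mu(w\otimes w),\mu^2\rangle\le c\langle w,\mu\rangle^2$ (already established at the end of the proof of Theorem~\ref{thm1}) gives $|Lf_R(\mu)|\le C_k\langle w,\mu\rangle^k$ with $C_k=k(k+1)C$ uniformly in $R$.

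Localizing at $\tau_R:=\inf\{t\ge 0:\langle w,X_t\rangle>R\}$, the process $f_R(X_{\cdot\wedge\tau_R})-\int_0^{\cdot\wedge\tau_R}Lf_R(X_s)\,ds$ is a bounded martingale, so optional stopping (together with $f_R(X_s)=\langle w,X_s\rangle^k$ for $s<\tau_R$) yields
\[
\E[f_R(X_{t\wedge\tau_R})]\le\langle w,\overline\mu\rangle^k+C_k\int_0^t\E[f_R(X_{s\wedge\tau_R})]\,ds,
\]
and Gronwall's inequality then gives $\E[f_R(X_{t\wedge\tau_R})]\le\langle w,\overline\mu\rangle^k e^{C_k t}$. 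Since $X$ is $\Pcal_w$-valued and RCLL, $\sup_{s\le t}\langle w,X_s\rangle<\infty$ almost surely, so $\tau_R\to\infty$ a.s.\ as $R\to\infty$, and Fatou's lemma finishes (i).

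For (ii), the bound (i) implies that for every $f\in\Ecal_w$ there exist $K\in\N$ and $C>0$ with $|f(\mu)|+|Lf(\mu)|\le C(1+\langle w,\mu\rangle^K)$, so $f(X_t)$ and $Lf(X_s)$ lie in every $L^p$. To establish the martingale property, I would approximate a generic $f\in\Ecal_w$ by bounded $h_m$ from the already-established extension of $L$---for instance by polynomial combinations of $\langle\varphi_i\psi_m,\mu\rangle e^{-\langle w,\mu\rangle/m}$ and $e^{-\langle w,\mu\rangle/m}$ with compactly supported cutoffs $\psi_m\uparrow 1$ and appropriate polynomials---such that $(h_m,Lh_m)\to(f,Lf)$ pointwise on $\Pcal_w$ with a common polynomial-in-$\langle w,\mu\rangle$ dominant. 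Dominated convergence, anchored by (i), then upgrades the martingale property from $h_m$ to $f$, and the $k$-integrability of $f(X)-f(X_0)-\int_0^\cdot Lf(X_s)\,ds$ follows from the same polynomial dominant, (i), and Doob's $L^k$ inequality. The main technical obstacle is constructing the approximating sequences $h_m$ with simultaneous polynomial-in-$\langle w,\mu\rangle$ control of both $h_m$ and $Lh_m$, which amounts to a careful extension of the polynomial-approximation technique already carried out in the proof of Theorem~\ref{thm1}\ref{T_Pw_3_new}.
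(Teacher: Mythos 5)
Your part \ref{bla1} follows the paper's argument almost exactly: truncate $\mu\mapsto\langle w,\mu\rangle^k$ to a function covered by the already-established extension of $L$, stop at the first hitting time of level $R$, apply Gronwall, then pass to the limit with Fatou (using, as you correctly note, that RCLL paths in $\Pcal_w$ have $\sup_{s\le t}\langle w,X_s\rangle<\infty$ a.s.). The paper uses the multiplicative cutoff $f_c(\mu)=p_c(\langle w,\mu\rangle)\langle w,\mu\rangle^k$ with $p_c$ supported on $[0,2c]$ rather than your $\phi_R$ that plateaus to a constant, but these are cosmetic variants of the same truncation and both work.

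For part \ref{bla2} the high-level plan (bounded approximants plus dominated convergence anchored by \ref{bla1}) is also the paper's plan, but you leave the decisive step --- constructing $h_m$ with $(h_m,Lh_m)\to(f,Lf)$ pointwise under a common polynomial dominant --- as an unresolved ``main technical obstacle,'' and the specific form you float does not obviously lie in the class to which $L$ has been shown to extend. In particular, $e^{-\langle w,\mu\rangle/m}=\bigl(e^{-\langle w,\mu\rangle}\bigr)^{1/m}$ corresponds to composing with $x\mapsto x^{1/m}$, which fails to be $C^2$ at $0$, whereas the extension established in the proof of Theorem~\ref{thm1}\ref{T_Pw_3_new} (via Lemma~\ref{lem3}) only covers compositions $p(f_1,\ldots,f_n)$ with $p\in C^2$, $p(0)=0$, and $f_i$ in the algebra generated by $\Dcal_w$ and $e^{-\langle w,\mu\rangle}$. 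The paper sidesteps this entirely by taking $g_c(\mu)=p_c(\langle w,\mu\rangle)g(\mu)$ for $g\in\Ecal_w$: since $p_c$ vanishes for $\langle w,\mu\rangle\ge 2c$, this is a genuine $C^2$ composition with $p(0)=0$ of algebra elements, so condition \eqref{eqn4} applies directly, and one immediately has the uniform-in-$c$ bounds $|g_c(\mu)|\le C\langle w,\mu\rangle^K$ and $|Lg_c(\mu)|\le C\langle w,\mu\rangle^K$. That one clean choice is exactly the missing ingredient in your sketch; once you substitute it for your proposed $h_m$, the dominated-convergence step goes through and \ref{bla2} follows. (A small aside: the $k$-integrability of the compensated process comes directly from \ref{bla1} and the polynomial bound on $f$ and $Lf$; Doob's inequality is not needed there.)
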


\begin{proof} To simplify the notation we just prove the case $d=1$.
Before to start observe that by the dominated convergence theorem the process
$
f(X) - f(\overline \mu) - \int_0^\fdot Lf(X_s) ds
$
denotes a bounded martingale for each map $f$ such that
\begin{equation} \label{eqn4}
\begin{minipage}[c][3em][c]{.8\textwidth}
\begin{center}
the pair $(f,Lf)$ with $Lf$ given by \eqref{eq_Levy_type} lies in\\ the bp-closure of the graph $\{(h,Lh)\colon h\in\Dcal_w\}$.
\end{center}
\end{minipage}
\end{equation}
We already shown in the proof of condition~\ref{T_Pw_6} during the proof of Theorem~\ref{thm1} that \eqref{eqn4} is satisfied by every function $f$ in the algebra generated by $\Dcal_w$ and $e^{-\langle w,\mu\rangle}$. Proceeding as in the proof of Lemma~\ref{lem3} we can extend this result to all  maps $f:\Pcal_w\to\R$ of the form
$
f(\mu):= p(f_1(\mu),\ldots,f_n(\mu))
$
for $p\in C^2(\R^n)$ with $p(0)=0$ and $f_1,\ldots,f_n$ being elements of that algebra.
\begin{enumerate}
\item[\ref{bla1}:] Fix now  $c>\langle w,\overline \mu\rangle$ and set $f_c(\mu):=p_c(\langle w,\mu\rangle) \langle w,\mu\rangle^k$ for $p_c(x)=p(x/c)$ where $p\in C^\infty(\R_+)$ satisfies
$1_{\{x\leq 1\}}\leq p(x)\leq1_{\{x\leq2\}}$. Note that $f_c$ satisfies \eqref{eqn4}.
Setting $T_c:=\inf\{t\geq0\ :\ \langle w,X_t\rangle\geq c\}$ we have, due to \eqref{eqn33a} for $\gamma=0$,
\begin{align*}
\E[f_c(X_{t\land T_c})]
&\leq\langle w,\overline \mu \rangle^k
+\int_0^tC_k\E[\langle w, X_{s}\rangle^k1_{\{s<T_c\}}]ds\\
&\leq\langle w, \overline\mu \rangle^k
+\int_0^tC_k\E[f_c( X_{s\land T_c})]ds.
\end{align*}
The Gronwall inequality implies that 
$\E[f_c(X_{t\land T_c})]\leq \langle w, \overline\mu \rangle e^{C_kt}$ and Fatou's lemma yields
\begin{align*}
\E[\langle w, X_t\rangle^k]
\leq\liminf_{c\to\infty}\E[p_c(\langle w, X_{t\land T_c}\rangle)\langle w, X_{t\land T_c}\rangle^k]
\leq \langle w, \overline\mu \rangle^k e^{C_kt}.
\end{align*}
\item[\ref{bla2}:] Fix now $g\in \Ecal_w$ and set $g_c(\mu):=p_c(\langle w,\mu\rangle)g(\mu)$.
Observe that each $g_c$ satisfies condition \eqref{eqn4}, $|g_c(\mu)|\leq C\langle w,\mu\rangle^k$, and $|Lg_c(\mu)|\leq C\langle w,\mu\rangle^k$ for some $k$ big enough and some constant $C$ not depending on $c$ and $\mu$. Since by \ref{bla1} we get 
$$
\E[\langle w,X_t\rangle^k - \langle w,\overline \mu\rangle^k - \int_0^t \langle w,X_s\rangle^k ds]
\leq \langle w, \overline\mu \rangle^k (e^{C_kt}+2+C_k^{-1}(e^{C_kt}-1))<\infty
$$
and $g_c\to g$ pointwise the claim follows  by the dominated convergence theorem. 
\end{enumerate}
This completes the proof.
\end{proof}

\section{McKean--Vlasov equations with common noise}\label{IIIsec42}

We continue to consider the setting of Section~\ref{three}: $E=\R^d$, $w(x)=|x|^p$ for $|x|>2$ and some $p\in(0,\infty)$, $L\colon\Dcal_w\to C(\Pcal_w)$ is a linear operator of L\'evy type \eqref{eq_Levy_type} with $\kappa=0$, $\Nu=0$, and $B$ and $Q$ given by
\begin{align*}
B_\mu(\varphi) &= b_\mu^\top\nabla\varphi+\frac 12\tr((\sigma_\mu^2+\tau_\mu^2)\nabla^2\varphi),\\
Q_\mu(\varphi\otimes \varphi) &= (\tau_\mu\nabla \varphi)\otimes (\tau_\mu\nabla \varphi),
\end{align*}
for some maps $b\colon\Pcal_w\times \R^d\to \R^d$ and $\sigma,\tau\colon\Pcal_w\times \R^d\to \S^{d}$.

\begin{definition}
A weak solution of the McKean--Vlasov equation specified by $(b,\sigma,\tau)$ is a tuple $(X,Z,W,W^0)$, defined on some filtered probability space, where $X$ and $Z$ are adapted with values in $\Pcal_w$ and $\R^d$, $W$ and $W^0$ are independent $d$-dimensional Brownian motions, and such that
\begin{equation}\label{eq_dZtisbttt}
d Z_t=b_{X_t}(Z_t)dt+\sigma_{X_t}(Z_t)dW_t+\tau_{X_t}(Z_t)dW_t^0
\end{equation}
and
\[
X_t=\P(Z_t\in\fdot\mid\Gcal_t)
\]
for some filtration $\G=(\Gcal_t)_{t\ge0}$ to which $W^0$ is adapted, and of which $W$ is independent.
\end{definition}

Our aim in this section is to give an existence result for the McKean--Vlasov equation specified by $(b,\sigma,\tau)$ by solving a martingale problem satisfied by the solution $(X,Z,W,W^0)$. The state space for this martingale problem is the product space $\Pcal_w\times\R^d\times\R^d\times\R^d$. We will show below that the solution satisfies
\begin{equation}\label{eq_dphiXty}
d\langle\varphi, X_t\rangle = \langle B_{X_t}\varphi, X_t\rangle dt + \langle \tau_{X_t} \nabla\varphi, X_t\rangle^\top dW_t^0
\end{equation}
for each $\varphi\in C_c^\infty(\R^d)$. The corresponding generator $H$ has domain $D(H)$ consisting of all algebraic combinations of functions $f(\mu)$, $\varphi(z)$, $\psi(x)$, $\theta(x^0)$ with $f\in\Dcal_w$ and $\varphi,\psi,\theta\in C^\infty_c(\R^d)$. In view of \eqref{eq_dZtisbttt} and \eqref{eq_dphiXty}, $H$ acts on functions of the form $h(\mu,z,x,x^0)=f(\mu)\varphi(z)\psi(x)\theta(x^0)$ by the somewhat cumbersome expression
\begin{align*}
Hh(\mu,z,x,x^0) &=  Lf(\mu)\varphi(z)\psi(x)\theta(x^0) + f(\mu) B_\mu\varphi(z)\psi(x)\theta(x^0) \\
&\quad + \frac12 f(\mu)\varphi(z)\Delta \psi(x) \theta(x^0) + \frac12 f(\mu)\varphi(z)\psi(x)\Delta \theta(x^0) \\
&\quad + \nabla\varphi(z)^\top\tau_\mu(z) \langle \tau_\mu \nabla(\partial f(\mu)), \mu\rangle\psi(x)\theta(x^0)  \\
&\quad + \nabla \theta(x^0)^\top\langle \tau_\mu \nabla(\partial f(\mu)), \mu\rangle\psi(x)\varphi(z) \\
&\quad + f(\mu)\theta(x^0)\nabla\varphi(z)^\top\sigma_\mu(z) \nabla\psi(x) \\
&\quad + f(\mu)\psi(x)\nabla\varphi(z)^\top\tau_\mu(z) \nabla\theta(x^0).
\end{align*}

\begin{theorem}\label{thm100}
Fix $\overline z\in \R^d$ and assume $b$, $\sigma$, $\tau$ satisfy the conditions of Theorem~\ref{thm1} for $\gamma=0$.  Then the martingale problem for $(H,D(H),\Pcal_w\times\R^d\times\R^d\times\R^d)$ with initial condition $(\delta_{\overline z},\overline z,0,0)$ has a solution $(X,Z, W, W^0)$, where $W$ and $W^0$ are independent $d$-dimensional Brownian motions. Moreover, the linear equation
\begin{equation}\label{eqn7}
\langle\varphi, Y_t\rangle=\varphi(\overline z)+\int_0^t\langle B_{X_s}\varphi, Y_s\rangle ds
+\int_0^t\langle \tau_{X_s} \nabla\varphi, Y_s\rangle^\top dW_s^0,\qquad \varphi\in C_c^\infty(\R^d).
\end{equation}
is satisfied for $Y=X$. If one has the compatibility conditions that $W$ is independent of the filtration $\G=(\Gcal_t)_{t\ge0}$ generated by $(X,W^0)$, and for all $s\le t$, $\Fcal_s$ and $\Gcal_t$ are conditionally independent given $\Gcal_s$, then \eqref{eqn7} is satisfied for $Y_t=\P(Z_t\in\fdot\mid\Gcal_t)$ as well. In particular, if in addition uniqueness holds for \eqref{eqn7}, then $(X,Z,W,W^0)$ is a weak solution of the McKean--Vlasov equation specified by $(b,\sigma,\tau)$.
\end{theorem}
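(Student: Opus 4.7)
The strategy is a staged construction followed by verification. The four components $X,W^0,W,Z$ are built in this order on successively enlarged probability spaces, and the martingale problem for $H$ is then checked by Itô's product rule.

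\textbf{Step 1: The measure-valued component.} Apply Theorem~\ref{thm1} with $\gamma=0$ to obtain a solution $X$ to the martingale problem for $(L,\Dcal_w,\Pcal_w)$ with $X_0=\delta_{\overline z}$; Proposition~\ref{prop21} then extends $X$ to a solution on the larger test class $\Ecal_w$ with all martingales $L^k$-integrable. Since the jump measure vanishes, the generator acts on $\mu\mapsto\langle\varphi,\mu\rangle$ as a second-order differential operator, so the martingales
\[
M^\varphi_t := \langle\varphi,X_t\rangle-\varphi(\overline z)-\int_0^t \langle B_{X_s}\varphi,X_s\rangle\, ds, \qquad \varphi\in C_c^\infty(\R^d),
\]
are continuous, and computing $L(fg)-f\,Lg-g\,Lf$ at $f=\langle\varphi,\cdot\rangle$, $g=\langle\psi,\cdot\rangle$ yields the covariation
\[
\langle M^\varphi,M^\psi\rangle_t=\int_0^t \langle \tau_{X_s}\nabla\varphi,X_s\rangle^\top\langle \tau_{X_s}\nabla\psi,X_s\rangle\, ds.
\]

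\textbf{Step 2: Construction of $W^0$.} The rank-$d$ structure above shows that the family $\{M^\varphi\}_\varphi$ is driven by a $d$-dimensional martingale with the coefficient process $s\mapsto \langle \tau_{X_s}\nabla\varphi,X_s\rangle$. By a standard enlargement-of-probability-space representation theorem for continuous martingales (e.g.\ Theorem~II.7.1' of Ikeda--Watanabe, or Theorem~4.5.2 in \cite{EK:05}), we obtain on an extension of the original space a $d$-dimensional Brownian motion $W^0$ such that
$M^\varphi_t = \int_0^t \langle \tau_{X_s}\nabla\varphi,X_s\rangle^\top dW^0_s$
for every $\varphi\in C_c^\infty(\R^d)$. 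This is precisely \eqref{eqn7} with $Y=X$.

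\textbf{Step 3: Construction of $W$ and $Z$.} Enlarge the probability space once more to carry a $d$-dimensional Brownian motion $W$ independent of $(X,W^0)$. The coefficients $s\mapsto b_{X_s}(z)$, $\sigma_{X_s}(z)$, $\tau_{X_s}(z)$ are adapted and continuous in $z$ with linear growth uniform in $\omega$ (by the hypotheses of Theorem~\ref{thm1} with $\gamma=0$), so classical Skorokhod--Stroock existence applied pathwise in $(X_\cdot,W^0,W)$ yields a weak solution $Z$ of \eqref{eq_dZtisbttt} with $Z_0=\overline z$. Itô's product rule applied to $h(X_t,Z_t,W_t,W^0_t)$ for product test functions $h=f\cdot\varphi\cdot\psi\cdot\theta$ decomposes into the terms appearing in $Hh$: the coupling between $f(X)$ and $\varphi(Z)$ (and between $f(X)$ and $\theta(W^0)$) comes from $\langle M^{\partial f(X)}, \cdot\rangle$-type covariations with the $W^0$-driven components, while the remaining cross-variations vanish by independence of $W$. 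Integrability from Proposition~\ref{prop21}\ref{bla1} promotes the resulting local martingale to a martingale, so $(X,Z,W,W^0)$ solves the martingale problem for $(H,D(H),\Pcal_w\times\R^d\times\R^d\times\R^d)$.

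\textbf{Step 4: Conditional law identity and conclusion.} Under the compatibility conditions, Itô's formula applied to $\varphi(Z_t)$ gives
\[
\varphi(Z_t)=\varphi(\overline z)+\int_0^t B_{X_s}\varphi(Z_s)ds+\int_0^t \nabla\varphi(Z_s)^\top\sigma_{X_s}(Z_s)dW_s+\int_0^t \nabla\varphi(Z_s)^\top \tau_{X_s}(Z_s)dW^0_s.
\]
Taking $\Gcal_t$-conditional expectations, the $W$-integral vanishes (since $W\perp\!\!\!\perp \G$ and $\Fcal_s\perp\!\!\!\perp_{\Gcal_s}\Gcal_t$), and the $W^0$-integral is preserved because $W^0$ is $\G$-adapted. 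This yields \eqref{eqn7} for $Y_t=\P(Z_t\in\cdot\mid\Gcal_t)$. If \eqref{eqn7} has a unique solution, then $Y=X$ identifies the conditional law with $X$, giving the McKean--Vlasov property. The main obstacle is Step~2: one must verify that a single $d$-dimensional Brownian motion represents the \emph{whole} family $\{M^\varphi\}_\varphi$ simultaneously, which is exactly what the rank-$d$ form of the covariation kernel (viewed as an operator with range in the span of $\tau_{X_s}\nabla\varphi$) allows via the auxiliary-Brownian-motion device.
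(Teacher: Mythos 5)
Your proposal takes a genuinely different, constructive route. The paper proves Theorem~\ref{thm100} in the opposite direction: Corollary~\ref{cor2} solves the martingale problem for $H$ on the product space $\Pcal_w\times\R^d\times\R^d\times\R^d$ directly, by extending the embedding $T$ to a product map $\Tcal(\mu,z,x,x^0)=(T(\mu),z,x,x^0)$ and re-running the proof of Theorem~\ref{T_Pw} with the transported operator $\widetilde H$; Lemma~\ref{L_s6nbr35a05yfgd} then \emph{recovers} the SDE representations \eqref{eq_dZtisbttt} and \eqref{eq_dphiXty} from the martingale problem in Stroock--Varadhan fashion (showing the relevant differences have vanishing quadratic variation); Lemma~\ref{L_s6nbr3asdgsfghsdfgyfgd} is your Step~4. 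The crucial device in the paper is that $W$ and $W^0$ are \emph{part of the state}, so no martingale representation theorem is ever invoked. You instead build $X$, $W^0$, $W$, $Z$ sequentially and verify the martingale problem for $H$ at the end.

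Your route is plausible in outline, but it relocates the technical burden and several steps are under-developed in substantive ways. Step~2 requires the $M^\varphi$ to be \emph{continuous}; solutions to the martingale problem are a priori only RCLL, so path continuity must first be extracted from the jump-free, second-order structure of $L$ (or from the embedded operator $\widetilde L$) before Ikeda--Watanabe II.7.1' can apply. The representation must also hold simultaneously for the uncountable family $\{M^\varphi:\varphi\in C^\infty_c(\R^d)\}$, which requires constructing $W^0$ from a countable subfamily and extending by linearity and continuity of $\varphi\mapsto\langle\tau_{X_s}\nabla\varphi,X_s\rangle$; you gesture at the rank-$d$ structure but do not supply this argument. (Also, \cite[Theorem~4.5.2]{EK:05} is a uniqueness statement, not a martingale representation theorem; only the Ikeda--Watanabe reference is apt.) Furthermore, verifying the cross-covariation terms of $Hh$ for a general $f\in\Dcal_w$ requires the $W^0$-representation of the martingale part of $f(X_t)$ itself, and since $f$ depends on $e^{-\langle w,\mu\rangle}$ and hence on $\langle w,X_t\rangle$, one must first upgrade the representation from $C^\infty_c$ test functions to the full algebra via Proposition~\ref{prop21}\ref{bla2} and It\^o's formula; your Step~3 glosses over this. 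Finally, the phrase ``classical Skorokhod--Stroock existence applied pathwise'' is not quite the right invocation: the SDE for $Z$ is driven by $(W,W^0)$ with progressively measurable random coefficients $b_{X_s}(\cdot),\sigma_{X_s}(\cdot),\tau_{X_s}(\cdot)$, and one needs a weak existence theorem for such SDEs, not a pathwise one. The paper's route avoids all of these issues at once: solving the martingale problem for $H$ directly makes the component SDEs and their covariation structure come out for free, and the only new ingredient beyond Theorem~\ref{thm1} is the positive maximum principle for $H$ on the product space.
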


The compatibility conditions on the filtrations $\F$ and $\G$ are rather implicit. However, similar conditions are known to be required elsewhere in the literature; see for instance page 114 in \cite{KX:99}, and the conditions of Theorem 2 in \cite{KSZ:78}. See also the remark at the beginning of page 142 in \cite{KSZ:78}. Let us also mention (without proof) that whenever $(X,W,W^0)$ solves the corresponding martingale problem, one can construct a process $\widetilde W$ such that $(X,\widetilde W,W^0)$ solves the same martingale problem and $\widetilde W$ is independent of the filtration $\G=(\Gcal_t)_{t\ge0}$ generated by $(X,W^0)$.

\begin{remark}
Let $f(\nu):=p(\langle \varphi_1,\nu\rangle,\ldots,\langle \varphi_n,\nu\rangle)$ for some nonnegative map $p\colon\R^n\to\R$ satisfying $p(0)=0$, some $\varphi_1,\ldots, \varphi_n\in C_c^\infty(\R^d)$, and all $\nu\in \Pcal_w$. Note that setting $\langle \varphi_i,\nu-\tilde\nu\rangle:=\langle \varphi_i,\nu\rangle-\langle\varphi_i,\tilde\nu\rangle$ we can naturally extend $f$ to  $\Pcal_w-\Pcal_w$. Consider now two solutions $Y$ and $\widetilde Y$ of \eqref{eqn7}. An application of It\^o's formula yields
$$\E[f(Y_t-\widetilde Y_t)]=\int_0^t\E[L_{X_s}f(Y_s-\widetilde Y_s)]ds,$$
where $L_\mu f(\nu)= \langle B_\mu(\partial f(\nu)), \nu\rangle + \frac12 \langle Q_\mu(\partial^2f(\nu)), \nu^2\rangle$. If $f$ additionally satisfies  
\begin{equation}\label{eqn300}
\text{$|L_\mu f(\nu)|\leq C f(\nu)$ for all $\nu\in \Pcal_w-\Pcal_w$ and $\mu\in \Pcal_w$},
\end{equation}
an application of the Gronwall inequality yields $\E[f(Y_t-\widetilde Y_t)]=0$, and thus that $Y_t-\widetilde Y_t\in\{f=0\}$. If this condition holds for sufficiently many $f$, we would be able to conclude that $Y_t=\widetilde Y_t$ almost surely and that uniqueness holds for \eqref{eqn7}. We illustrate a situation where this is the case in the following example.

Let $d=1$, $\overline z\in[0,1]$, and assume that $Y_t([0,1])=\widetilde Y_t([0,1])=1$ for each $t\geq0$.
 Assume that the maps $x\mapsto b_\mu(x) $ and $x\mapsto\tau_\mu(x)$ are polynomials of degree at most 1 and the map $x\mapsto\sigma_\mu(x)^2$ is a polynomial of degree at most 2.
This in particular implies that $B_\mu$ and $Q_\mu$ are polynomial operators in the sense of \cite{FL:17}, meaning that they map any polynomial to a polynomial of the same or lower degree.
Fix then $H_0,\ldots,H_{m}\in C_c^\infty(\R)$ such that $H_i(x)=x^i$ for each $x\in[0,1]$ and set $p_m(\nu)= \sum_{i=0}^{m}\langle H_i,\nu\rangle^2$. Note that for each $\nu\in \Pcal_w-\Pcal_w$ such that $\supp(\nu)\subseteq[0,1]$ we have
\begin{align*}
|L_\mu p_m(\nu)|&=
|\sum_{i=0}^m2\langle H_i,\nu\rangle\langle B_\mu H_i,\nu\rangle+\langle Q_\mu(H_i\otimes H_i),\nu^2\rangle|\\
&=|\sum_{i,j=0}^{m}\alpha^\mu_{ij}\langle H_i,\nu\rangle\langle H_j,\nu\rangle|\\
&\leq (m+1)\sup_{ij}|\alpha^\mu_{ij}|p_m(\nu),
\end{align*}
for some $\alpha^\mu_{ij}\in \R$.
This implies that if $\sup_{\mu\in \Pcal_w} |\alpha^\mu_{ij}|<\infty$, then condition \eqref{eqn300} is satisfied, and $\langle H_i,Y_t\rangle=\langle H_i, \widetilde Y_t\rangle$ for each $i\in\{1,\ldots,m\}$. Since $m$ was arbitrary the  same conclusion holds for each $i\in \N$. Since two measures on $[0,1]$ have the same moments if and only if they are the same, it follows that $Y_t=\widetilde Y_t$ almost surely and uniqueness holds for \eqref{eqn7}.
\end{remark}

The rest of the section is devoted to the proof of Theorem~\ref{thm100}. We start with a corollary of Theorem~\ref{thm1}.

\begin{corollary}\label{cor2}
Assume that $b$, $\sigma$, and $\tau$ satisfy the conditions of Theorem~\ref{thm1} for $\gamma=0$. Then  there exists a solution $(X,Z,W,W^0)$ to the martingale problem for $(H,D(H),\Pcal_w\times\R^d\times\R^d\times\R^d)$ for every initial condition $(\mu,z,x,x^0)\in\Pcal_w\times\R^d\times\R^d\times\R^d$.
\end{corollary}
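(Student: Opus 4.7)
The plan is to adapt the proof of Theorem~\ref{thm1} to the joint state space $\Pcal_w\times\R^{3d}$. Since $\R^{3d}$ is already locally compact, the only obstruction to applying the classical Theorem~\ref{T:existence1} is the $\Pcal_w$-factor, which I handle using the embedding $T$ from \eqref{eq_T_embedding}. Specifically, set $\widetilde\Xcal=\Xcal\times(\R^{3d})^\Delta$, which is locally compact Polish, and lift $H$ via $T\otimes\mathrm{id}$ to an operator $\widetilde H$ whose domain $\widetilde{D(H)}\subset C_0(\widetilde\Xcal)$ is the algebra generated by products $\widetilde f(\nu)\varphi(z)\psi(x)\theta(x^0)$ with $\widetilde f\in\Dcal$ (from \eqref{eq_T_Dw}) and $\varphi,\psi,\theta\in C_c^\infty(\R^d)$. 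The lifted operator is well-defined on $\widetilde\Xcal$ by combining Lemma~\ref{L_Dprops} with the $C_0$-type extension arguments already used in the proof of Theorem~\ref{thm1}.

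Next, I verify the hypotheses of Theorem~\ref{T:existence1} for $(\widetilde H,\widetilde{D(H)},\widetilde\Xcal)$. Density of $\widetilde{D(H)}$ in $C_0(\widetilde\Xcal)$ follows from Stone--Weierstrass together with Lemma~\ref{L_Dprops}\ref{L_Dprops:3}, and $\widetilde H(\widetilde{D(H)})\subset C_0(\widetilde\Xcal)$ follows as in Lemma~\ref{lem2} for the $L$-part, plus continuity of the $b,\sigma,\tau$-induced terms guaranteed by the hypotheses \eqref{eqn32}. For the positive maximum principle, at a maximum $(\nu^*,z^*,x^*,x^{0*})$ of $h=\widetilde f\varphi\psi\theta\ge 0$, I first reduce via Lemma~\ref{L_Dprops}\ref{L_Dprops:4} to the case $\nu^*\in T(\Pcal_w)$, so that the problem reduces to the positive maximum principle on $\Pcal_w\times\R^{3d}$ itself. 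There I combine Theorem~\ref{T_optimality} on the $\mu$-slot with classical second-order optimality in the Euclidean variables.

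Theorem~\ref{T:existence1} then yields a possibly killed solution $(\widetilde X,Z,W,W^0)$ to the $\widetilde H$-martingale problem on $\widetilde\Xcal^\dag$. To confirm that $\widetilde X_t\in T(\Pcal_w)$ (does not charge $\Delta$) and that $(Z_t,W_t,W_t^0)\in\R^{3d}$ (does not explode in finite time), I apply the Gronwall argument of Theorem~\ref{T_Pw}\ref{T_Pw_3_new}, verified via Lemma~\ref{lem3}, on the $\Pcal_w$-factor, and standard linear-growth $L^2$ moment bounds mirroring Proposition~\ref{prop21} on the Euclidean factor. Theorem~\ref{T_Pw}\ref{T_Pw_6}, available via Lemma~\ref{lem4}, then promotes the possibly killed solution to a genuine solution; both arguments rely essentially on the $\gamma=0$ hypothesis. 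Transferring back via $T^{-1}$ yields the desired $(X,Z,W,W^0)$.

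The main obstacle is the verification of the positive maximum principle for the joint operator $\widetilde H$. The common-noise cross-terms in $H$, namely $\nabla\varphi(z)^\top\tau_\mu(z)\langle\tau_\mu\nabla(\partial f(\mu)),\mu\rangle$ and $\nabla\theta(x^0)^\top\langle\tau_\mu\nabla(\partial f(\mu)),\mu\rangle$, link the measure-theoretic and Euclidean optimality conditions, so one cannot simply add nonpositive contributions from each factor. The remedy is to regard these cross-terms together with $\langle Q_\mu(\partial^2 f(\mu)),\mu^2\rangle$, $\tr(\tau_\mu^2\nabla^2\varphi(z))$, and $\Delta\theta(x^0)$ as the diffusion part of a single enlarged L\'evy-type operator whose effective diffusion matrix acts simultaneously on $(\partial f(\mu),\varphi,\theta)$, and then apply Theorem~\ref{T_optimality}\ref{T_optimality:4} once, jointly. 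Once this assembly is carried out carefully, the nonpositivity at the joint maximum follows, and the remaining steps are routine combinations of ingredients already developed in Sections~\ref{main}--\ref{three}.
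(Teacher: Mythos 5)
Your proposal essentially mirrors the paper's proof: lift the state space via $\Tcal=(T,\mathrm{id},\mathrm{id},\mathrm{id})$ to the locally compact product $\Xcal\times\R^{3d}$, verify the positive maximum principle for $H$ by combining the measure-slot optimality conditions (Theorem~\ref{T_optimality}) with classical Euclidean ones --- treating the common-noise cross-terms as part of a single joint second-order perturbation, which is exactly the ``slight modification'' of Theorem~\ref{T_optimality}\ref{T_optimality:3b} the paper invokes --- and then transplant the Gronwall/linear-growth arguments of Theorem~\ref{T_Pw}\ref{T_Pw_3_new}--\ref{T_Pw_6} to keep the solution inside $T(\Pcal_w)\times\R^{3d}$. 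The one small superfluity is your one-point compactification of $\R^{3d}$: $\Xcal\times\R^{3d}$ is already locally compact Polish, so Theorem~\ref{T:existence1} applies directly, as the paper does.
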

\begin{proof}
We first observe that $H$ satisfies the positive maximum principle on $\Pcal_w\times \R^d\times \R^d\times \R^d$. This can be proven by the classical optimality conditions on $\R^{d}\times\R^d\times \R^d$, Theorem~\ref{T_optimality}\ref{T_optimality:1}, and a slightly modification of the argument in the proof of Theorem~\ref{T_optimality}\ref{T_optimality:3b}.

Observe then that the concepts introduced in Section~\ref{main} can be generalized by setting
$$\Tcal:\Pcal_w\times \R^{d}\times\R^d\times \R^d\to \Xcal\times \R^{d}\times\R^d\times \R^d,\qquad \Tcal(\mu,z,x,x^0):=(T(\mu),z,x,x^0)$$
and calling $f\colon\Pcal_w\times\R^{d}\times\R^d\times \R^d\to\R$  of  $C_0$ type if $f\circ \Tcal^{-1}\colon \Tcal(\Pcal_w\times \R^{d}\times\R^d\times \R^d)\to\R$ extends to a $C_0$ function on $\Xcal\times \R^{d}\times\R^d\times \R^d$. Since $L$ satisfies the conditions of  Theorem~\ref{thm1} we know that $H h$ is of $C_0$ type for every $h\in D(H)$. Following the proof of Theorem~\ref{T_Pw} we can conclude that there exists a possibly killed solution of the martingale problem for $(\widetilde H, D(H)\circ \Tcal^{-1},\Xcal\times \R^{d}\times\R^d\times \R^d)$ where
$$\widetilde H\widetilde h = H(\widetilde h\circ \Tcal)\circ \Tcal^{-1}.$$
Using that by Theorem~\ref{thm1} the operator $L$ satisfies conditions  \ref{T_Pw_3_new}--\ref{T_Pw_6} of Theorem~\ref{T_Pw}, we can conclude the proof by following the proof of Theorem~\ref{T_Pw}.
\end{proof}
The fact that a solution of the martingale problem is also a weak solution of the corresponding SDE is due, in the classical case, to  \cite{SV:72}.

\begin{lemma}\label{L_s6nbr35a05yfgd}
Assume that the conditions of Corollary~\ref{cor2} are satisfied and consider a solution $(X,Z, W, W^0)$  to the martingale problem for $(H,D(H),\Pcal_w\times\R^d\times\R^d\times\R^d)$ with initial condition $(\delta_{\overline z},\overline z,0,0)$. Then $W,W^0$ are independent Brownian motions, and \eqref{eq_dZtisbttt} and \eqref{eq_dphiXty} hold for each $\varphi\in C_c^\infty(\R^d)$.
\end{lemma}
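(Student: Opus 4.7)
My plan is to derive the three conclusions in turn by specializing the test function $h\in D(H)$ in the joint martingale problem.

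\emph{Brownian motions.} I would first apply the martingale problem with $h(\mu,z,x,x^0)=\psi(x)$ for $\psi\in C_c^\infty(\R^d)$ (a single-factor algebraic combination, hence in $D(H)$), for which $Hh=\frac12\Delta\psi$. Thus $\psi(W_t)-\frac12\int_0^t\Delta\psi(W_s)ds$ is a local martingale for every such $\psi$, and L\'evy's characterization identifies $W$ as a standard $d$-dimensional Brownian motion starting at $0$; an identical argument with $h=\theta(x^0)$ handles $W^0$. For the independence of $W$ and $W^0$ I would use $h=\psi(x)\theta(x^0)$, for which $Hh=\frac12\Delta\psi(x)\theta(x^0)+\frac12\psi(x)\Delta\theta(x^0)$ contains no cross terms; comparison with It\^o's formula then forces $d\langle W^i,(W^0)^j\rangle_t=0$, so by a joint application of L\'evy's criterion $(W,W^0)$ is a $2d$-dimensional Brownian motion.

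\emph{SDE for $Z$.} I would next use $h=\varphi(z)$, for which $Hh=B_\mu\varphi(z)$; localizing and approximating by $C_c^\infty$ functions the coordinates $z\mapsto z_i$ and $z\mapsto z_iz_j$ produces the decomposition $Z_t=\overline z+\int_0^t b_{X_s}(Z_s)ds+M_t$, where $M$ is a continuous $\R^d$-valued local martingale with $d\langle M\rangle_t=(\sigma_{X_t}^2+\tau_{X_t}^2)(Z_t)\,dt$. Taking then $h=\varphi(z)\psi(x)$ and $h=\varphi(z)\theta(x^0)$, the cross terms $\nabla\varphi(z)^\top\sigma_\mu(z)\nabla\psi(x)$ and $\nabla\varphi(z)^\top\tau_\mu(z)\nabla\theta(x^0)$ appearing in $Hh$ translate via It\^o's formula into $d\langle M,W\rangle_t=\sigma_{X_t}(Z_t)\,dt$ and $d\langle M,W^0\rangle_t=\tau_{X_t}(Z_t)\,dt$. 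Setting $N_t:=M_t-\int_0^t\sigma_{X_s}(Z_s)\,dW_s-\int_0^t\tau_{X_s}(Z_s)\,dW_s^0$, a direct computation that uses $W\perp W^0$ gives $d\langle N\rangle_t=0$, hence $N\equiv 0$, which is \eqref{eq_dZtisbttt}.

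\emph{Equation for $\langle \varphi,X_t\rangle$.} Taking $h=f(\mu)$ with $f\in\Dcal_w$ shows that $X$ solves the martingale problem for $(L,\Dcal_w,\Pcal_w)$, and by Proposition~\ref{prop21}\ref{bla2} this extends to $(L,\Ecal_w,\Pcal_w)$. In particular, $N^\varphi_t:=\langle\varphi,X_t\rangle-\varphi(\overline z)-\int_0^t\langle B_{X_s}\varphi,X_s\rangle\,ds$ is a square-integrable martingale for every $\varphi\in C_c^\infty(\R^d)$. To identify $N^\varphi$, I would extend the joint martingale problem to test functions involving $f\in\Ecal_w$ by a bp-closure and moment-bound argument analogous to the proof of Proposition~\ref{prop21}. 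Applied to $h=\langle\varphi,\mu\rangle\psi(x)$ this yields no cross term, hence $d\langle N^\varphi,W\rangle_t=0$; applied to $h=\langle\varphi,\mu\rangle\theta(x^0)$ it produces the cross term $\nabla\theta(x^0)^\top\langle\tau_\mu\nabla\varphi,\mu\rangle$, giving $d\langle N^\varphi,W^0\rangle_t=\langle\tau_{X_t}\nabla\varphi,X_t\rangle\,dt$; and applied to $h(\mu)=\langle\varphi,\mu\rangle^2$, the identity $L(\langle\varphi,\cdot\rangle^2)(\mu)=2\langle\varphi,\mu\rangle\langle B_\mu\varphi,\mu\rangle+|\langle\tau_\mu\nabla\varphi,\mu\rangle|^2$ together with It\^o's formula gives $d\langle N^\varphi\rangle_t=|\langle\tau_{X_t}\nabla\varphi,X_t\rangle|^2\,dt$. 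Comparing with $I_t:=\int_0^t\langle\tau_{X_s}\nabla\varphi,X_s\rangle^\top dW_s^0$, one then has $d\langle N^\varphi-I\rangle_t=0$, so $N^\varphi=I$, which is \eqref{eq_dphiXty}. The main obstacle is this last extension to $\Ecal_w$-type test functions in the joint problem; all other manipulations are standard Stroock--Varadhan-style conversions from martingale problems to stochastic differential equations.
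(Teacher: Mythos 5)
Your proposal takes essentially the same route as the paper: specialize $h\in D(H)$ to identify $(W,W^0)$ as a $2d$-dimensional Brownian motion (the paper uses $h=\psi(x,x^0)$, you use products $\psi(x)\theta(x^0)$, which is the same after Stone--Weierstrass), and then recover \eqref{eq_dZtisbttt} and \eqref{eq_dphiXty} by the Stroock--Varadhan device of showing that the relevant local martingales have vanishing quadratic variation --- precisely what the paper sketches before writing ``we omit the details'' and ``the proof of \eqref{eq_dZtisbttt} is similar.'' You also correctly isolate the one genuine technical point that the paper leaves implicit: to compute $\langle N^\varphi\rangle$ and $\langle N^\varphi,W^0\rangle$ one must admit $\Ecal_w$-type test functions in $\mu$ (e.g.\ $\langle\varphi,\mu\rangle^2$ and $\langle\varphi,\mu\rangle\theta(x^0)$) into the \emph{joint} martingale problem, by a bp-closure and moment-bound extension exactly parallel to Proposition~\ref{prop21}.
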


\begin{proof}
For $h(\mu,z,x,x^0)=\psi(x,x^0)$, we have that $Hh(\mu,z,x,x^0)=\frac 1 2 \Delta\psi(x,x^0)$ is the Laplacian. Thus $W$ and $W^0$ are independent Brownian motions. To prove \eqref{eq_dphiXty}, we must show that the process $\langle\varphi, X_t\rangle-\int_0^t\langle B_{X_s}\varphi, X_s\rangle ds-\int_0^t\langle \tau_{X_s} \nabla\varphi(X_s),X_s\rangle^\top dW_s^0$, which is known to be a martingale due to Proposition~\ref{prop21}, is constant. This is done by verifying that its quadratic variation is zero; we omit the details. The proof of \eqref{eq_dZtisbttt} is similar.
\end{proof}

\begin{lemma}\label{L_s6nbr3asdgsfghsdfgyfgd}
Assume that the conditions of Corollary~\ref{cor2} are satisfied and consider a solution $(X,Z, W, W^0)$ to the martingale problem for $(H,D(H),\Pcal_w\times\R^d\times\R^d\times\R^d)$ with initial condition $(\delta_{\overline z},\overline z,0,0)$. If one has the compatibility conditions that $W$ is independent of the filtration $\G=(\Gcal_t)_{t\ge0}$ generated by $(X,W^0)$, and for all $s\le t$, $\Fcal_s$ and $\Gcal_t$ are conditionally independent given $\Gcal_s$, then the conditional law process $Y_t:=\P( Z_t\in\fdot\mid\Gcal_t)$ satisfies \eqref{eqn7}.
\end{lemma}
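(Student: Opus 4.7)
The strategy is to apply It\^o's formula to $\varphi(Z_t)$ using the SDE \eqref{eq_dZtisbttt} supplied by Lemma~\ref{L_s6nbr35a05yfgd}, and then project onto $\Gcal_t$. Since $\varphi\in C^\infty_c(\R^d)$ and $b,\sigma,\tau$ have linear growth (with $\gamma=0$), all stochastic integrals are true martingales. It\^o gives
\[
\varphi(Z_t) = \varphi(\overline z) + \int_0^t B_{X_s}\varphi(Z_s)\,ds + M_t + N_t,
\]
where $M_t := \int_0^t \nabla\varphi(Z_s)^\top \sigma_{X_s}(Z_s)\,dW_s$ and $N_t := \int_0^t \nabla\varphi(Z_s)^\top \tau_{X_s}(Z_s)\,dW_s^0$, and I would identify $\E[\varphi(Z_t)\mid \Gcal_t] = \langle\varphi,Y_t\rangle$ with the right-hand side of \eqref{eqn7} term by term.

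\textbf{Drift term.} Using the integrability from Proposition~\ref{prop21} to justify Fubini, $\E[\int_0^t B_{X_s}\varphi(Z_s)\,ds \mid \Gcal_t] = \int_0^t \E[B_{X_s}\varphi(Z_s)\mid \Gcal_t]\,ds$. The compatibility condition ($\Fcal_s$ and $\Gcal_t$ conditionally independent given $\Gcal_s$) collapses this to $\int_0^t \E[B_{X_s}\varphi(Z_s)\mid\Gcal_s]\,ds$. Since $X_s$ is $\Gcal_s$-measurable, freezing it and applying the definition of $Y_s$ yields $\E[B_{X_s}\varphi(Z_s)\mid\Gcal_s]=\int B_{X_s}\varphi(z)\,Y_s(dz)=\langle B_{X_s}\varphi,Y_s\rangle$, as needed.

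\textbf{Vanishing of $M$.} The goal is $\E[M_t\mid \Gcal_t]=0$. Since $W$ is independent of $\G$, combining with the compatibility condition gives that $W$ remains a Brownian motion with respect to the enlarged filtration $(\Fcal_t \vee \Gcal_\infty)_{t\ge 0}$: indeed, for $A\in\Fcal_t$ and $B\in\Gcal_\infty$, conditional independence given $\Gcal_t$ combined with independence of $W$ from $\Gcal_\infty$ shows that $W_{t+h}-W_t$ is independent of $\Fcal_t\vee\Gcal_\infty$. Hence $M$ is a martingale in this enlarged filtration, so $\E[M_t\mid \Gcal_\infty]=0$, and a fortiori $\E[M_t\mid \Gcal_t]=0$.

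\textbf{The $N$ term.} This is the technical core: I must show
\[
\E\Bigl[\int_0^t \nabla\varphi(Z_s)^\top \tau_{X_s}(Z_s)\,dW_s^0 \,\Big|\, \Gcal_t\Bigr] = \int_0^t \langle \tau_{X_s}\nabla\varphi,Y_s\rangle^\top dW_s^0.
\]
Here $W^0$ is a $\G$-Brownian motion (by construction $W^0$ is $\G$-adapted and independent of $W$). The compatibility condition is tailored to make the $\G$-optional projection commute with stochastic integration against $W^0$: applied to an $\F$-adapted integrand $H_s=\nabla\varphi(Z_s)^\top\tau_{X_s}(Z_s)$, it gives $^{o,\G}H_s = \E[H_s\mid\Gcal_s]=\langle \tau_{X_s}\nabla\varphi,Y_s\rangle^\top$, and then a standard filtering-theoretic identity (cf.\ p.~114 in \cite{KX:99} and Theorem~2 in \cite{KSZ:78}) yields the displayed identity. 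I would verify this on simple integrands first (piecewise constant $H$ of the form $H_s=\mathbf{1}_{(a,b]}(s)h$ with $h\in\Fcal_a$, where the compatibility condition handles the single step directly), and then pass to the limit using It\^o isometry and the uniform bound on $\tau\nabla\varphi$ on the compact support of $\varphi$.

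The main obstacle is this last step: the commutation of $\E[\,\cdot\,|\Gcal_t]$ with $\int\cdot\,dW^0$, which is precisely what the compatibility hypotheses are designed to enable, and which is the point at which the $\F$/$\G$ structure of the problem must be used in full.
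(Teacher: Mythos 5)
Your overall strategy matches the paper exactly: apply It\^o's formula to $\varphi(Z_t)$, condition on $\Gcal_t$, and identify the three terms with those of \eqref{eqn7}. The paper carries this out by invoking a single appendix result, Lemma~\ref{lemA}, which packages both the commutation $\E[\int_0^t H_s^\top dW_s^0\mid\Gcal_t]=\int_0^t\E[H_s\mid\Gcal_s]^\top dW_s^0$ and the vanishing $\E[\int_0^t H_s^\top dW_s\mid\Gcal_t]=0$, and proves them by an $L^2$-orthogonality computation: one shows that $\xi_t:=\E[\int_0^t H_s^\top dW^0_s\mid\Gcal_t]$ and $\int_0^t\E[H_s\mid\Gcal_s]^\top dW_s^0$ have the same second moment and the same cross moment, hence their difference has zero $L^2$ norm, with the conditional-independence hypothesis used to upgrade $\G$-martingales to $\F$-martingales along the way. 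Your sketch proposes to establish the same Fubini-type identity by a different route: verify it on simple predictable integrands and pass to the $L^2$ limit via It\^o isometry, which is the more classical (and equally valid) path and is in the spirit of the original reference \cite{KSZ:78} that both you and the paper cite. Similarly, for the $W$-integral you argue via the enlarged filtration $(\Fcal_t\vee\Gcal_\infty)_{t\ge0}$; the one-sentence justification that $W_{t+h}-W_t\perp\Fcal_t\vee\Gcal_\infty$ would need a bit more care (pairwise independence of $W$ from $\Fcal_t$ and from $\Gcal_\infty$ does not give joint independence without using the conditional-independence hypothesis carefully), whereas the paper sidesteps this by showing directly that $\eta_t:=\E[M_t\mid\Gcal_t]$ has $\E[\eta_t^2]=\E[\eta_t M_t]=\E[\langle\eta,M\rangle_t]=0$ since $\eta$ is $\G$-adapted, hence has zero covariation with $W$. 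Finally, the integrability justification is slightly different: you cite Proposition~\ref{prop21}, the paper uses the pointwise bound \eqref{eqn6} coming from \eqref{eqn33} with $\gamma=0$; either suffices.
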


Before we start the proof, observe that condition \eqref{eqn33} for $\gamma=0$ implies
\begin{equation}\label{eqn6}
\sup_{x\in\R,\ \mu\in \Pcal_w} { | B_\mu(x)\varphi(x)|+| \tau_\mu(x)\nabla\varphi(x)|}
<\infty
\end{equation}
for each $ \varphi\in C_c^\infty(\R^d)$.

\begin{proof}
Observe that an application of the It\^o formula yields
\begin{align*}
\varphi( Z_t)&=\varphi(\overline z)+\int_0^t\!B_{X_s}\varphi( Z_s)ds+\int_0^t(\sigma_{X_s}( Z_s)\nabla \varphi( Z_s))^\top dW_s+\int_0^t(\tau_{X_s}( Z_s)\nabla \varphi( Z_s))^\top dW_s^0,
\end{align*}
for each $\varphi\in C_c^\infty(\R^{d})$. 
Note that condition \eqref{eqn6} yields that 
$$\E[|\tau_{X_s}( Z_s)\nabla \varphi( Z_s)|^2]\text{ and }\E[|\sigma_{X_s}( Z_s)\nabla \varphi( Z_s)|^2]$$ are almost surely bounded in $s$ for each $\varphi\in C_c^\infty(\R^{d})$. Combining Lemma~\ref{lemA},  Fubini theorem for conditional expectation, and the $\Gcal_s$-measurability of $X_s$ we then get
\begin{align*}
\E[\varphi(  Z_t)\mid\Gcal_t]&=\varphi(\overline z)+\int_0^t\E[ B_{X_s}(  Z_s)\mid\Gcal_s]ds+\int_0^t\E[(\tau_{X_s}(  Z_s)\nabla \varphi( Z_s))\mid\Gcal_s]^\top dW_s^0\\
&=\varphi(\overline z)+\int_0^t\langle B_{X_s}\varphi, Y_s\rangle ds
+\int_0^t\langle \tau_{X_s} \nabla\varphi, Y_s\rangle^\top dW_s^0,
\end{align*}
for each $\varphi\in C^\infty_c(\R^d)$. 
\end{proof}

\begin{proof}[Proof of Theorem~\ref{thm100}]
The result follows directly from Corollary~\ref{cor2} and Lemmas~\ref{L_s6nbr35a05yfgd} and~\ref{L_s6nbr3asdgsfghsdfgyfgd}.
\end{proof}

\appendix
\section{A Fubini type result}
The result presented in this section is based on Theorem~2 in \cite{KSZ:78} and its proof.
Let $(\Omega, \Fcal,\F=(\Fcal_t)_{t\geq0},\P)$ be a filtered probability space endowed with two $d$-dimensional Brownian motions $W$ and $W^0$. Consider then a second filtration $\G=(\Gcal_t)_{t\ge0}$ to which $W^0$ is adapted, and of which $W$ is independent. 
\begin{lemma}\label{lemA}
If $\Fcal_s$ and $\Gcal_t$ are conditionally independent given $\Gcal_s$, then
$$\E[\int_0^t H_s^\top dW_s^0\mid\Gcal_t]=\int_0^t \E [H_s^\top \mid\Gcal_s]dW_s^0\qquad\text{and}\qquad \E[\int_0^t H_s^\top dW_s\mid\Gcal_t]=0,$$
for each square integrable continuous process $H$ satisfying $\int_0^t\E[|H_s|^2]ds<\infty$.
\end{lemma}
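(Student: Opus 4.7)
I would prove both identities via the standard two-step strategy: first check them on simple integrands, then extend by $L^2$ density. Using $\int_0^t\E[|H_s|^2]ds<\infty$ and continuity of $H$, one approximates $H$ in $L^2([0,t]\times\Omega,ds\otimes d\P)$ by left-continuous step processes $\sum_j h_j 1_{(u_j,u_{j+1}]}$ with $h_j$ bounded and $\Fcal_{u_j}$-measurable. Both sides of each identity depend continuously on $H$ in this norm: the left-hand sides via It\^o's isometry and the $L^2$-contractivity of conditional expectation; the right-hand sides via It\^o's isometry together with the $L^2$-contractivity of $\E[\fdot\mid\Gcal_s]$. Hence it suffices to treat a single block $H_s=h\,1_{(u,v]}(s)$ with $u<v\le t$ and $h$ bounded and $\Fcal_u$-measurable.

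\emph{The $W^0$ identity.} The left-hand side is $\E[h^\top(W^0_v-W^0_u)\mid\Gcal_t]$. Since $W^0$ is $\G$-adapted, $W^0_v-W^0_u$ is $\Gcal_t$-measurable and pulls out, giving $\E[h\mid\Gcal_t]^\top(W^0_v-W^0_u)$. The compatibility hypothesis $\Fcal_u\perp\Gcal_t\mid\Gcal_u$ applied to $h\in\Fcal_u$ yields $\E[h\mid\Gcal_t]=\E[h\mid\Gcal_u]$; the same reasoning with $t$ replaced by any $s\in[u,v]$ shows that the $\G$-martingale $s\mapsto\E[h\mid\Gcal_s]$ is constant on $[u,v]$ and equal to $\E[h\mid\Gcal_u]$. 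The right-hand side then reduces to $\int_u^v\E[h\mid\Gcal_u]^\top dW^0_s=\E[h\mid\Gcal_u]^\top(W^0_v-W^0_u)$, matching the left-hand side.

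\emph{The $W$ identity.} It suffices to prove $\E[h^\top(W_v-W_u)1_A]=0$ for every $A\in\Gcal_t$. Conditioning on $\Fcal_u$ and pulling out $h$, this reduces to the key identity $\E[(W_v-W_u)1_A\mid\Fcal_u]=0$ a.s. I would deduce it from two ingredients: (i) $\E[W_v-W_u\mid\Fcal_u]=0$, the $\F$-Brownian-motion property of $W$, and (ii) the conditional independence of $W_v-W_u$ and $1_A$ given $\Fcal_u$, which makes the conditional expectation of the product factor. Claim (ii) in turn follows from the stronger assertion that $W_v-W_u$ is independent of $\Fcal_u\vee\Gcal_\infty$. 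This latter claim would be established by iterated conditioning on $\Gcal_u$: for $B\in\Fcal_u$, $C\in\Gcal_\infty$, and bounded Borel $f$, one uses the compatibility $\Fcal_u\perp\Gcal_\infty\mid\Gcal_u$ (the $t\uparrow\infty$ limit of the standing assumption) to decouple $1_B$ and $1_C$ conditionally on $\Gcal_u$, and then invokes $W_v-W_u\perp\Fcal_u$ together with $W\perp\G$ to factor $f(W_v-W_u)$ out of the remaining conditional expectations.

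\emph{Main obstacle.} The delicate step is the independence $W_v-W_u\perp\Fcal_u\vee\Gcal_\infty$: the two unconditional independences $W_v-W_u\perp\Fcal_u$ (Brownian increment) and $W_v-W_u\perp\Gcal_\infty$ (from $W\perp\G$) do not automatically combine, and the compatibility hypothesis is exactly what bridges this gap, as in the argument of \cite{KSZ:78}. Once this independence claim is in place, both identities on simple $H$ follow immediately and the density argument completes the proof.
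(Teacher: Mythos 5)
Your proposal takes a genuinely different route from the paper. The paper works directly at the level of the full stochastic integrals: it sets $\xi_t=\E[\int_0^tH_s^\top dW_s^0\mid\Gcal_t]$, identifies $\int_0^t\E[H_s\mid\Gcal_s]^\top\,ds$ as the predictable covariation $\langle\xi,W^0\rangle$ from the $\F$-martingale property of $W^0\xi-\int_0^\cdot\E[H_s\mid\Gcal_s]^\top\,ds$, and then closes the argument with the $L^2$-isometry $\E[(\xi_t-\int_0^t\E[H_s\mid\Gcal_s]^\top dW^0_s)^2]=0$; for the $W$-part it observes that $\eta_t=\E[\int_0^tH_s^\top dW_s\mid\Gcal_t]$ is a $\G$-adapted (hence $W$-independent) $\F$-martingale, so $[\eta,W]=0$ and $\E[\eta_t^2]=\E[\eta_t\int_0^tH_s^\top dW_s]=0$. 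You instead reduce by density to a single block $h\,1_{(u,v]}$. Your treatment of the $W^0$-identity is correct and arguably more elementary than the paper's, using only that $W^0_v-W^0_u$ is $\Gcal_t$-measurable, that $\E[h\mid\Gcal_t]=\E[h\mid\Gcal_u]$ by compatibility, and the corresponding constancy of $s\mapsto\E[h\mid\Gcal_s]$ on $(u,v]$.

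The $W$-identity is where you have a real gap. Your argument hinges on the independence $W_v-W_u\perp(\Fcal_u\vee\Gcal_\infty)$, and the ``iterated conditioning on $\Gcal_u$'' you sketch does not establish it. Conditioning on $\Gcal_u$ and invoking $\Fcal_u\perp\Gcal_\infty\mid\Gcal_u$ lets you factor $\E[1_B1_C\mid\Gcal_u]=\E[1_B\mid\Gcal_u]\E[1_C\mid\Gcal_u]$, but it tells you nothing about $\E[f(W_v-W_u)1_B1_C\mid\Gcal_u]$, since $f(W_v-W_u)$ belongs neither to $\Fcal_u$ nor to $\Gcal_\infty$. What you would actually need is the strengthened conditional independence $\bigl(\sigma(W_v-W_u)\vee\Fcal_u\bigr)\perp\Gcal_\infty\mid\Gcal_u$, and the three pairwise facts at your disposal ($W_v-W_u\perp\Fcal_u$, $W_v-W_u\perp\Gcal_\infty$, $\Fcal_u\perp\Gcal_\infty\mid\Gcal_u$) do not combine into this joint statement by generalities alone. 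The conclusion you want is in fact true, but the natural way to prove it under the stated hypotheses is precisely a zero-covariation argument: for $C\in\Gcal_\infty$, $Y_r=\E[1_C\mid\Gcal_r]$ is a $\G$-adapted (hence $W$-independent) $\F$-martingale, the $\F$-martingale $N_r=\E[f(W_v-W_u)\mid\Fcal_r]$ is a stochastic integral against $W$ on $[u,v]$, hence $[N,Y]=0$ and $\E[(N_v-N_u)(Y_v-Y_u)\mid\Fcal_u]=0$, which after unwinding gives $\E[f(W_v-W_u)1_B1_C]=\E[f(W_v-W_u)]\E[1_B1_C]$. But that is essentially the paper's covariation argument in disguise, so the intended shortcut does not actually bypass it. To complete your proof you should either carry out that covariation computation explicitly or abandon the independence route in favor of the paper's $L^2$-isometry argument.
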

Observe that the conditional independence assumption implies that each $\G$-mar\-tin\-gale  is also an $\F$-martingale. This condition 
is automatically satisfied if $\Gcal_t:=\sigma(W_s^0,s\leq t)$.

\begin{proof}
Set $\xi_t:=\E[\int_0^tH_s^\top dW^0_s\mid\Gcal_t]$ and note that  $\xi$ and $W_0$ are square integrable martingales with respect to $\G$ and thus with respect to $\F$. Moreover, since $W_t^0\xi_t=\E[W^0_t\int_0^tH_s^\top dW^0_s\mid\Gcal_t]$, for each $A\in\Gcal_u$ we can compute
\begin{align*}
\E[(W^0_t\xi_t-W^0_u\xi_u)1_A]&=\E\Big[\Big(W^0_t\E[\int_0^tH_s^\top dW^0_s\mid\Gcal_t] 
-W^0_u\E[\int_0^uH_s^\top dW^0_s\mid\Gcal_u] \Big)1_A\Big]\\
&=\E\Big[\Big(W^0_t\int_0^tH_s^\top dW^0_s-W^0_u\int_0^uH_s^\top dW^0_s\Big)1_A\Big]\\
&=\E[\int_u^t(H_s^\top 1)ds1_A]\\
&=\E\Big[\int_u^t\E[H_s\mid\Gcal_s]^\top 1ds1_A\Big],
\end{align*}
and thus conclude that $W_t^0\xi_t-\int_0^t\E[H_s\mid\Gcal_s]^\top 1ds$ is an $\F$-martingale.
This in particular implies that $\int_0^t\E[H_s\mid\Gcal_s]^\top 1ds$ is the predictable quadratic covariation of $\xi $ and $W^0$ with respect to $\F$. Since $\int_0^tH_s^\top dW^0_s$ is a square integrable $\F$-martingale and  $\xi_t^2=\E[\xi_t\int_0^tH_s^\top dW^0_s\mid\Gcal_t]$ we get
\begin{align*}
\E[\xi_t^2]&=\E[\xi_t\E[\int_0^tH_s^\top dW^0_s\mid\Gcal_t] ]\\
&=\E[\xi_t\int_0^tH_s^\top dW^0_s]\\
&=\E[\int_0^tH_s^\top \E[H_s\mid\Gcal_s]ds]\\
&=\E[\int_0^t\E[H_s\mid\Gcal_s]^\top \E[H_s\mid\Gcal_s]ds].
\end{align*}
Similarly we also get that $\E[\xi_t\int_0^t\E[H_s\mid\Gcal_s]^\top dW_s^0]=\E[\int_0^t\E[H_s\mid\Gcal_s]^\top \E[H_s\mid\Gcal_s]ds]$. 
Using that $\E[(\int_0^t\E[H_s\mid\Gcal_s]^\top dW_s^0)^2]=\E[\int_0^t\E[H_s\mid\Gcal_s]^\top \E[H_s\mid\Gcal_s]ds]$ we can thus conclude that
$$\E[(\xi_t-\int_0^t\E[H_s\mid\Gcal_s]^\top dW_s^0)^2]=0$$
proving that $\E[\int_0^tH_s^\top dW^0_s\mid\Gcal_t]=\int_0^t\E[H_s\mid\Gcal_s]^\top dW_s^0$.

For the second part set $\eta_t:=\E[\int_0^tH_s^\top dW_s\mid\Gcal_t]$. Since $\eta$ and $W$ are two independent continuous $\F$-martingales we already know that $(\eta_t W_t)_{t\geq0}$ defines a square integrable $\F$-martingale. Proceeding as in the first part we can thus conclude that
$\E[\eta_t^2]
=\E[\eta_t\int_0^tH_s^\top dW_t]=0$.
\end{proof}


\end{document}